\DeclareMathOperator{\tr}{tr}
\newcommand{\HH}{\mathrm{H}}
\newcommand{\HF}{\operatorname{H\underline{\F}}}
\newcommand{\Z}{\mathbf{Z}}
\newcommand{\C}{\mathbf{C}}
\newcommand{\F}{\mathbf{F}}
\newcommand{\CC}{\mathbb{C}}
\newcommand{\ZT}{\C_2 \mathcal{T}}
\newcommand{\ZS}{\C_2\mathcal{S}p}
\newcommand{\ZhS}{\C_2h\mathcal{S}p}
\newcommand{\Sp}{\mathcal{S}p}
\newcommand{\T}{\mathcal{T}}
\newcommand{\bF}{\mathbf{F}}
\renewcommand{\mod}{\mathrm{mod}}
\renewcommand{\hom}{\mathrm{Hom}}
\numberwithin{equation}{section}
\newtheorem{theorem}[equation]{Theorem}
\newtheorem{proposition}[equation]{Proposition}
\newtheorem{corollary}[equation]{Corollary}
\newtheorem{lemma}[equation]{Lemma}
\theoremstyle{definition}
\newtheorem{example}[equation]{Example}
\newtheorem{definition}[equation]{Definition}
\newtheorem{remark}[equation]{Remark}
\newtheorem{convention}[equation]{Convention}
\newcommand{\wmono}{ \ar@{>->}[r]}
\newcommand{\wmonovert}{ \ar@{>->}[d]}
\newcommand{\cof}{ \ar@{^{(}->}[r]}
\mathchardef\dashmod="2D
\DeclareMathOperator{\colim}{colim}
\begin{document}

\title{Conjugation Spaces are Cohomologically Pure}

\author{Wolfgang Pitsch}
\address{Universitat Aut\`onoma de Barcelona \\ Departament de Matem\`atiques\\
E-08193 Bellaterra, Spain}
\email{pitsch@mat.uab.es}

\author{Nicolas Ricka}
\address{IRMA \\ Universit\'e de Strasbourg\\ 7 Rue Ren\'e Descartes, 67000 Strasbourg, France}
\email{n.ricka@unistra.fr}

\author{J\'er\^ome Scherer}
\address{EPFL \\ Institute of Mathematics\\ Station 8, CH-1015 Lausanne, Switzerland}
\email{jerome.scherer@epfl.ch}

\thanks{The authors are partially supported  by FEDER/MEC grant MTM2013-42293-P and MTM2016-80439-P. The second
author would like to thank the MPI in Bonn for its hospitality.}
\subjclass{Primary 55P91; Secondary 57S17; 55S10; 55N91; 55P42}

\keywords{Conjugation spaces, realization, Hopf invariant}
\newcommand{\W}{\mathcal{W}}
\newcommand{\A}{\mathcal{A}}

\begin{abstract}
Conjugation spaces are equipped with an involution such that the fixed points have the same
mod $2$ cohomology (as a graded vector space, a ring, and even an unstable algebra) but with
all degrees divided by $2$, generalizing the classical examples of complex projective spaces
under complex conjugation. 
Using tools from stable equivariant homotopy theory we provide a characterization of conjugation spaces
in terms of purity. This conceptual viewpoint, compared to the more computational original definition, allows us
to recover all known structural properties of conjugation spaces.
\end{abstract}
\maketitle

\section{Introduction}
Given a pointed space $X$ with an involution, i.e. an action of $\C_2$, the cyclic group of order $2$, a classical way to understand $X$ is by relating 
the cohomology of $X$, of its fixed point space $X^{\C_2}$, of the orbit space $X/\C_2$ and of the space of homotopy orbits, or (reduced) Borel construction 
$X_{h\C_2}= (E\C_2{}_+ \wedge X)/\C_2$, where $\C_2$ acts diagonally on  $E\C_2{}_+ \wedge X$ (see for instance \cite{MR1236839}). 
Here we assume that $X$ has a  chosen base point, fixed by the involution $\tau$.
The space $E\C_2$ is a contractible space with a free $\C_2$-action, and its quotient is the classifying space $B\C_2$, also known as the infinite 
real projective space $\mathbb R P^\infty$.   

A \emph{conjugation space}, as introduced by Hausmann, Holm, and Puppe in \cite{MR2171799}, is an instance where this relationship is particularly well-behaved. 
Let us denote by $\HH$ the Eilenberg-MacLane spectrum representing ordinary cohomology with coefficients in the field $\F$ of two elements. 
To emphasize the role of the spectrum in the definition of cohomology, as this will be central later on, we will denote by $\HH^\ast(X)$ 
the ordinary reduced mod $2$ cohomology group of $X$. 
The Borel cohomology $\HH^\ast(X_{h\C_2})$ comes with  two natural \emph{restriction morphisms}:
\begin{itemize}
\item restriction to ordinary cohomology $\rho\colon  \HH^\ast(X_{h\C_2}) \rightarrow \HH^\ast(X)$ induced by the natural inclusion
$\C_2{}_+ \wedge X \hookrightarrow E\C_2{}_+ \wedge X$. 

\item restriction to the Borel cohomology of the fixed points $$r\colon\HH^\ast(X_{h\C_2}) \rightarrow \HH^\ast((X^{\C_2})_{h\C_2}).$$
\end{itemize}

Since $\C_2$ acts trivially on the fixed points $X^{\C_2}$, the Borel construction $(X^{\C_2})_{h\C_2}$ is the smash product
$B\C_{2+} \wedge X^{\C_2}$, and the classical K\"unneth theorem tells us that the Borel cohomology $\HH^\ast((X^{\C_2})_{h\C_2})$, 
as a graded ring, is isomorphic to $\HH^\ast(X^{\C_2})[b]$, a polynomial ring  in one variable $b$  of cohomological degree $1$ with coefficients in 
the ordinary cohomology of~$X^{\C_2}$. The original definition by Hausmann, Holm, and Puppe reads now as follows when adapted to our
pointed setup.

\begin{definition}
\label{def:conjspaceHHP} \cite[Section~3.1]{MR2171799}
{\rm A \emph{conjugation space} is a $\C_2$-space equipped with an $\HH^*$-\emph{frame} $(\kappa_0,\sigma)$, i.e.
\begin{enumerate}
 \item[a)] an additive isomorphism $\kappa_0 \colon \HH^{2\ast}(X) \rightarrow \HH^\ast(X^{\C_2})$
dividing degrees by~$2$,

\item[b)] an additive section $\sigma \colon \HH^{2\ast}(X) \rightarrow \HH^{2\ast}(X_{h\C_2})$
of the restriction map $$\rho \colon \HH^{2\ast}(X_{h\C_2}) \rightarrow \HH^{2\ast}(X),$$
\end{enumerate}

which satisfy the \emph{conjugation equation}:
\[
r \circ \sigma(x) = \kappa_0(x) b^m + lt_m
\]
for all $x \in \HH^{2m}(X)$ and all $m \in \mathbf{N}$, where $lt_m$ is a polynomial in the variable $b$ of 
degree strictly less than $m$.}
\end{definition}

The $\HH^\ast$-frame has many nice properties, as explained in the first sections of~\cite{MR2171799}:
\begin{enumerate}
	\item The morphisms $\kappa_0$ and $\sigma$ in an $\HH^*$-frame are ring homomorphisms.
	\item The $\HH^\ast$-frame is functorial for maps between conjugation spaces; in particular, if it exists, a frame is unique.
\end{enumerate}

Even more is true. Let us expand the conjugation equation by explicitly labeling its coefficients for $x$ a cohomological class of degree $2m$:
\[
r \circ \sigma(x) =  \sum_{i=0}^m\kappa_i(x) b^{m-i}.
\]
Franz and Puppe  studied in \cite{MR2198191} the behavior of the frame under the action the Steenrod algebra.
They obtained two formulas,  first for any $x \in \HH^{2m}(X; \bF)$ and any $\ell \geq 0$, one has $\kappa_0 (Sq^{2\ell} x) = Sq^\ell \kappa_0(x)$.
In the second they expressed the higher classes $\kappa_i(x)$ in terms of $\kappa_0(x)$ and Steenrod operations, namely
$\kappa_\ell(x) = Sq^\ell\kappa_0(x)$, see \cite[Theorem~1.3]{MR2198191}.

\medskip

This compatibility has many interesting properties, for instance it implies that for a conjugation \emph{manifold} $M$ the non-equivariant  cobordism 
class of $M$ is determined by that of its real locus $M^{\C_2}$. More precisely the Stiefel-Whitney classes of $M$ and of $M^{\C_2}$ determine each other
as investigated in \cite[Theorem~A.1]{MR3082744}. 

In this article we address the question whether a conjugation frame is purely algebraic or if the maps $\kappa_0$ and $\sigma$ have some
geometric meaning. Even if one can construct ``exotic'' conjugation spaces, which we do in a separate paper \cite{zoo}, the best known and most
common examples of conjugation spaces are cellular, in the sense that they arise from conjugation spheres,  \cite[Example~3.6]{MR2171799},
by attaching conjugation cells. The two-dimensional sphere $S^{1+\alpha}$ is the one-point compactification of the field of complex numbers $\CC$ 
endowed with complex conjugation and higher, even dimensional, conjugation spheres are obtained analogously from $\CC^n$. Conjugation cells
are simply unit balls in $\CC^n$ and attaching maps are required to be equivariant. Such cellular conjugation spaces are called \emph{spherical}
in \cite[Section~5.2]{MR2171799}. For instance the classifying space $BU$ with the complex conjugation is a spherical conjugation complex, and 
this allowed us to rather straightforwardly develop a theory of equivariant Stiefel-Whitney classes for Real vector bundles,~\cite{MR3082744}.

To understand how close arbitrary conjugation spaces are from being spherical, we follow the guiding principle brought by the second author
and recast the definition of conjugation spaces  in the equivariant stable world. The main advantage of this approach is that the various restriction maps,
the halving isomorphism $\kappa_0$, and the section $\sigma$, are directly encoded in the graded Mackey functor structure of equivariant mod $2$
cohomology. Our main references are Hill, Hopkins, and Ravenel's \cite{MR3505179}, Greenlees and May's \cite{MR1230773}, and \cite{MR1413302}.
We work with the equivariant Eilenberg-Mac Lane spectrum $\HF$, associated to the Mackey functor $\underline{\mathbf{F}}$ 
and whose associated cohomology theory is called ordinary equivariant cohomology. Recall that for any space $X$ the smash product
$X \wedge H$ splits as a wedge of Eilenberg-Mac Lane spectra, \cite[Lemma~II.6.1]{MR1324104}. Moving to the equivariant world, this is still 
the case for finite $\C_2$-spaces as shown by C.~May in \cite{Clover}. We follow \cite[Definition~4.56]{MR3505179} and introduce the notion of purity.

\medskip
\noindent {\bf Definition~\ref{def:homolpurity}.}
An equivariant space is \emph{homologically pure} if there exist a set $I$, natural numbers $n_i$ for any $i \in I$, and a 
weak equivalence of right $\HF$-modules
\[
 X\wedge \HF \simeq \bigvee_{i \in I}\Sigma^{n_i(1+\alpha)}\HF.
\]

\medskip

The main result of the present work is a stable equivariant characterization of conjugation spaces in terms of purity. We impose a mild finiteness
condition: We say that a space is of \emph{finite type} if its ordinary mod $2$ cohomology is finitely generated in each degree.

\medskip
\noindent {\bf Theorem~\ref{thm:rickadef}.}
{\it
Let $X$ be a $\C_2$-space of finite type. Then $X$ is a conjugation space if and only if it is homologically pure. 
}

\medskip

In this definition there is no mention of the section $\sigma$, the degree halving isomorphism $\kappa_0$ or the conjugation equation. 
We will show that both maps are in fact  induced by precise  geometric maps in equivariant cohomology, which explains geometrically 
the unicity in the conjugation frame. The compatibility of these maps with cohomological operations in $\HF$-cohomology that preserve  
the  line $\{m(1 + \alpha \ | \ m \in \mathbb{Z}) \}$ implies their compatibility  with Steenrod squares. This provides a conceptual proof
of the Franz-Puppe result mentioned above. Following an indication by Lannes we also show that for a conjugation space $X$, 
the Borel cohomology $\HH^\ast(X_{h\C_2})$ is functorially determined by  $\HH^\ast(X^{\C_2})$. Let us conclude this introduction by 
mentioning Olbermann's alternative 
definition of conjugation spaces in \cite[Remark~2.4]{MR2425140}. He refers to this as a definition without a conjugation equation, as it does not refer explicitly to the existence of an $\HH^\ast$-frame. Whereas his viewpoint is algebraic ours is more geometric.

\medskip

Here is a short outline of this paper. We recall in Section~\ref{sec:equivariant} some features of 
equivariant spectra and equivariant cohomology theories. Here and in the next two short sections we fix various notations,
and present results about the geometric fixed points, as well as  Stong's computation 
of the coefficients of $\C_2$-equivariant ordinary cohomology.
We present the stable equivariant background in some details because the literature on  conjugation spaces has been mostly written
unstably up to now. These first sections can be safely skipped by equivariant experts, but they will be useful to other readers and save 
them the need to go through many references. Then in Section~\ref{sec:pureimpliesconj} we show the first half of our result, namely that homologically pure 
spaces  of finite type are 
conjugation spaces. In Section~\ref{sec:conjimpliespure} we prove the reverse implication for finite type conjugation spaces.
Finally in Section~\ref{sec:properties} we use our definition to exhibit some properties of conjugation spaces, we prove in particular the compatibility of the conjugation 
frame with Steenrod operations and show that $\HH^\ast(X_{h\C_{2}})$ and $\HH^\ast(X)$ are related via the ``derived functor of the destabilization functor'' 
of Lannes-Zarati~\cite{MR871217}. The results of this last section depend on explicit computations involving the equivariant Steenrod algebra which might be of independent interest and are the subject of Appendix~\ref{app:compcohomology}.

\subsection*{Acknowledgments} During the years that lead to the present work, we where encouraged by a number of people. 
We specially thank J.-C. Hausmann and I.~Hambleton for their continued interest and support, J.~Lannes for kindly pointing out to us the relationship 
between conjugations spaces and his work on the derived functor of the destabilization, and I.~Patchkoria for enlightening discussions about
geometric fixed points and Steenrod operations.  

We warmly thank the referee for pointing out several important issues and indicating useful references, his remarks lead us to greatly improve the exposition.

\section{Equivariant spectra and cohomology}
\label{sec:equivariant}
In all this work we will denote by $\C_2 = \{ e, \tau \}$ the cyclic group of order two, where $e$ stands for the neutral element,  and by $\bF$ the field with two elements. 
By convention a $\C_2$-space $X$ is a topological space  with a specific choice of an involution given by the 
action of the generator~$\tau$. By analogy with the conjugation action on the complex numbers, the subspace of fixed points $X^{\C_2}$ will be called 
the \emph{real locus} of~$X$.

We have tried to follow a coherent notation in this article. We have been helped by Greenlees's \cite{MR3807754}, even if the encounter of
stable equivariant homotopy theory with conjugation spaces sometimes lead us to make different choices.

\subsection{Equivariant spectra}\label{subsec:equivspec}

 We take a stable approach to (ordinary) cohomology
since  Brown Representability  says precisely that a generalized cohomology theory is represented by a spectrum, see
\cite[Theorem~9.27]{MR1886843}. Identifying homotopy equivalent spectra is then a natural step as they represent the same cohomology theory.

Let us denote by $\T$ the category of \emph{pointed} topological spaces, by $\Sp$ the \emph{pointed} category of spectra.
By construction the categories of topological spaces and of spectra are related by a pair of adjoint functors: 
\[
\Sigma^\infty \colon \T \leftrightarrows \Sp\colon \Omega^\infty.
\]

Let us turn to the equivariant case. Everything will be stated for the group $\C_2$ but almost all the aspects we discuss 
here are true in a much larger generality, see for instance the nice introduction by Greenlees and May to the subject in 
the Handbook \cite{MR1361893}, the monograph on Tate cohomology by the same authors, \cite{MR1230773},  or the classical \cite{MR866482}.

Denote by $\C_2\T$ the category of pointed topological spaces endowed with a $\C_2$-action, where
the morphisms are the equivariant maps, and the weak equivalences are the equivariant weak equivalences
as defined in Definition~\ref{def:equivequivalence}.
A new feature of the equivariant category $\C_2\T$ is that there is more than one equivariant sphere in each dimension 
with respect to which one may suspend. 

 \begin{definition}\label{def:representationsphere}
Given any finite dimensional orthogonal representation $V$, the \emph{representation sphere} $S^V$ is the one point 
compactification of $V$. If $\mathbb{R}^n$ is a trivial representation then we simply write $S^n$ for $S^{\mathbb{R}^n}$.

By $S(V)$ we denote the \emph{unit sphere} in $V$, endowed with the restriction of the action of $\C_2$ on $V$.
\end{definition}

The space $S^V$ is a sphere of dimension $\dim V$, with a canonical base point, namely the image of $0$, which is fixed under
the action of $\C_2$. For each such sphere and  any $\C_2$-space $X$ we may consider the smash product $S^V \wedge X$ 
with diagonal action. Like in the non-equivariant 
case the passage from $\C_2\T$ to equivariant spectra $\C_2\Sp$ amounts to inverting all operations 
$S^V \wedge - : \C_2\T \rightarrow  \C_2\T$.
The categories of equivariant pointed topological spaces and spectra are again related by a pair of adjoint functors:
\[
\Sigma^\infty_{\C_2} \colon \T \rightleftarrows \Sp \colon \Omega^\infty_{\C_2}.
\]

In the development of stable homotopy theory much effort has been put in providing structured models for the homotopy category of spectra. 
The symmetric monoidal closed structure on spaces (either equivariant or non-equivariant) induces a symmetric monoidal closed structure on 
the chosen categorical model for equivariant  spectra (see \cite{MR1922205,MR1806878}). Whenever explicitly needed we will use in this work the  model of orthogonal spectra, see~\cite[Definition~III.1.7]{Schwede}.

\begin{definition} \label{def:symmonclosed}
We denote by $\wedge$ and $F_{\C_2}(-,-)$ the \emph{monoidal product} and the \emph{equivariant function 
spectrum} respectively. For non-equivariant spectra, we denote simply by $F(-,-)$ the function spectrum.
\end{definition}

Since $\wedge$ endows the stable category with a symmetric closed monoidal product, 
it makes sense to talk about ring objects, i.e. spectra $R$ with a multiplication $R \wedge R \rightarrow R$ making the usual diagrams 
commute, and (right) $R$-module spectra $M$ endowed with an action $M \wedge R \rightarrow M$. Observe in particular that for any spectrum $M$ the multiplication in $R$ gives $M \wedge R$ a canonical $R$-module structure. 

\begin{definition}
\label{def:module}
Let $E$ be a ring spectrum. We denote by $E\dashmod \mod$ the category of (right) \emph{$E$-modules}. If $E$ is commutative, 
the smash product of spectra induces a symmetric monoidal closed structure on $E\dashmod \mod$. 
We denote by $\wedge_{E}$ and $F_{E\dashmod \mod}(-,-)$ the corresponding tensor product and internal hom respectively.
\end{definition}

There are two functors that help us to relate the equivariant stable homotopy category with the standard one.
We have first the \emph{restriction} functor we get by forgetting the action:
\[
\begin{array}{rcl}
\C_2\Sp & \longrightarrow & \Sp \\
X & \longmapsto & X^u 
\end{array}
\]
and the \emph{trivial action} functor which allows us to include ordinary spectra into equivariant ones:
\[
\begin{array}{rcl}
\Sp & \longrightarrow & \C_2\Sp \\
X & \longmapsto & \iota X 
\end{array}
\]

Both functors induce triangulated functors on the homotopy category and preserve the smash product, i.e. they are strongly monoidal.
They also preserve compact objects and products, \cite[Section~II.4]{MR866482}.  
As a consequence, see \cite{MR866482}, or the derived and very general viewpoint \cite[Theorem~1.7]{MR3542492}, 
both functors are part of a series of adjunctions. Most notably, we have a first series of adjunctions:
\begin{equation}
\label{adj:free}
\C_{2+} \wedge - \dashv (-)^u \dashv F(\C_{2+},-).
\end{equation}
The leftmost adjoint is the \emph{free action} functor:
\[
\begin{array}{rcl}
\Sp & \longrightarrow & \C_2\Sp \\
X & \longmapsto & \C_{2+} \wedge  X 
\end{array}
\]
where the action is induced by the left action on $\C_{2+}$,
and the right adjoint:
\[
\begin{array}{rcl}
\Sp & \longrightarrow & \C_2\Sp \\
X & \longmapsto &  F(\C_{2+},X) 
\end{array}
\]
is given by the function spectrum on which $\C_2$ acts on the left through its right action on  itself, \cite[Section~II.4]{MR866482}.

The functor $\iota$ also admits both left and right adjoints. That the right adjoint
	\begin{equation}
	\label{adj:fix}
	 \iota \dashv (-)^{\C_2}.
	\end{equation}
	 can be identified with a fixed-point functor is a result of Lewis~\cite{Lewis95}. The left adjoint can not be identified with taking the orbits in general, as is the 
	 case for spaces. It is the case in some very specific situations, see \cite[Chapter~II]{LMS86}. We will need this in one such circumstance, and come to this at the appropriate time.

One of the subtleties in the theory is the interaction of these functors with the monoidal structure.
Most notably the fixed points functor and the equivariant suspension $\Sigma^\infty_{\C_2}$ \emph{ do not} commute, 
even for the sphere by tom Dieck's splitting Theorem (see the original reference \cite[Satz 2]{tD75}, or \cite[Section V]{LMS86}). 
The introduction of the geometric fixed points, see Definition~\ref{def:geofixedpoints} below, is useful to tackle this issue. 

\subsection{Mackey-valued cohomology }\label{subsec:mackeyfunc}

Given an ordinary spectrum $E$ we denote the associated cohomology and homology  theories evaluated at a spectrum $X$ by
\[
E^\ast(X) = [S^{-\ast} \wedge X,E]
\ \ \textrm{and} \ \
E_\ast(X) = [S^{\ast} , X\wedge E]
\]
where $\ast \in \mathbb{Z}$, and $[-,-]$ denotes stable homotopy classes of maps. 
If $X$ is  a space  we will freely confuse it with its  suspension spectrum $\Sigma^\infty X$ if
this is clear from the context.

There is a conceptual explanation stemming from the structure of the homotopy  category $h\Sp$ as to why ordinary cohomology takes value 
in abelian groups and this is related to the $t$-structure arising from the notion of connectivity.
Denote by $h\Sp_{\geq 0}$ the subclass of connective spectra, i.e. such that $\pi_n(X) = 0$ for $n< 0$, and by $h\Sp_{\leq -1}$ 
the co-connective spectra. Then the \emph{heart} of this structure $h\Sp_{\geq 0} \cap \Sigma h\Sp_{\leq -1}$ is isomorphic to the 
category of abelian groups. The spectrum corresponding to the abelian group $A$ is the Eilenberg-MacLane spectrum $\HH A$, 
characterized by the fact that:
\[
\pi_n(\HH A) = \left\{\begin{array}{lc} A & \textrm{ if } n = 0, \\ 0 & \textrm{ if } n \neq 0. \end{array} \right.
\]
The same construction holds true in essence when taking into account a $\C_2$-action. 

\begin{definition}\label{def:equivequivalence} \cite[Definition~I.4.4]{MR866482}
A map $f\colon X \rightarrow Y \in \ZS$ is a \emph{weak equivalence} in $\ZS$ if for any $n \in \Z$ the morphisms
\begin{enumerate}
\item $\pi_{n}^{\C_2}(f)\colon \pi_n^{\C_2}(X) = [S^n,X]^{\C_2} \rightarrow [S^n,Y]^{\C_2} = \pi_n^{\C_2}(Y)$, and

\medskip

\item $\pi_n^e(f)\colon \pi_n^e(X) = [\C_{2+}\wedge S^n,X]^{\C_2}\rightarrow  [\C_{2+}\wedge S^n,Y]^{\C_2} = \pi_n^e(Y)$,
\end{enumerate}
are both isomorphisms, where $[-, -]^{\C_2}$ indicates homotopy classes of equivariant maps.
\end{definition}

For each $n$ the functors $\pi_{n}^{\C_2}$ and $\pi_{n}^{e}$ are part of a richer structure, namely a Mackey functor $\underline\pi_n$, which
we will introduce below in Subsection~\ref{subsec:Mackeyfunctors}. Notice that $\pi_n^{\C_2}(X) \cong \pi_n X^{\C_2}$ since the sphere
$S^n$ is endowed with the trivial action, see \eqref{adj:fix}, and $\pi_n^{e}(X) \cong \pi_n X^{u}$ by the free-forgetful adjunction \eqref{adj:free}.
As in the non-equivariant case, define an equivariant spectrum to be \emph{$k$-connected} if, for any $n\geq k$, we have $\underline{\pi}_n(X) = 0$, 
and $k$-coconnected if for any $n\leq k$, we have $\underline{\pi}_n(X) = 0$. Then, exactly as for ordinary spectra, one can identify the heart of the associated
$t$-structure, see \cite[Proposition~I.7.14]{MR866482}.

\begin{proposition}
\label{prop:heartisMackey}
The heart of the $t$-structure determined by the classes of connective $\ZhS_{\geq0}$ and coconnective  $\ZhS_{\leq -1}$ spectra is 
isomorphic to the abelian category of Mackey functors $\mathcal{M}$. 
\end{proposition}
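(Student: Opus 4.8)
The plan is to identify the heart $\mathcal{H} = \ZhS_{\geq 0} \cap \ZhS_{\leq -1}$ of the $t$-structure with the category of Mackey functors $\mathcal{M}$ by producing mutually inverse functors. In one direction, given an equivariant spectrum $X$ in the heart, I would assign to it the Mackey functor $\underline{\pi}_0(X)$, whose value at $\C_2/\C_2$ is $\pi_0^{\C_2}(X) = [\mathbb{S}, X]^{\C_2}$, whose value at $\C_2/e$ is $\pi_0^e(X) = [\C_{2+}, X]^{\C_2} \cong \pi_0 X^u$, and whose restriction, transfer and Weyl-group action are induced by the maps $\C_{2+} \to \mathbb{S}$ (collapse), $\mathbb{S} \to \C_{2+}$ (the stable transfer obtained from the Pontryagin--Thom construction on the finite free $\C_2$-set), and the $\tau$-action on $\C_{2+}$. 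One must check these satisfy the double-coset / Mackey axioms, which for $\C_2$ reduces to the single relation $\mathrm{res}\circ\mathrm{tr} = 1 + \tau$; this is a direct consequence of the stable cofiber sequence $\C_{2+} \to \mathbb{S} \to \widetilde{E\C_2}$ together with the fact that the self-map of $\C_{2+}$ obtained by composing transfer and restriction is the sum of the identity and the swap.

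For the reverse direction I would invoke the equivariant Eilenberg--Mac Lane construction: to a Mackey functor $\underline{M}$ one associates the spectrum $H\underline{M}$ characterized by $\underline{\pi}_n(H\underline{M}) = \underline{M}$ for $n=0$ and $0$ otherwise (this is exactly the equivariant analogue of the classical $\HH A$ recalled just above the statement, and its existence is standard, e.g. in \cite[Section~I.7]{MR866482}). Since $H\underline{M}$ has homotopy concentrated in degree $0$, it lies in the heart; functoriality in $\underline{M}$ is automatic from the representability of Eilenberg--Mac Lane spectra. I would then check that $\underline{\pi}_0(H\underline{M}) \cong \underline{M}$ naturally, and conversely that the counit map $H\underline{\pi}_0(X) \to X$ is a weak equivalence whenever $X$ is in the heart, by comparing homotopy Mackey functors degreewise: both sides have $\underline{\pi}_n = 0$ for $n \neq 0$ by the heart condition, and the map is an isomorphism on $\underline{\pi}_0$ by construction, so Definition~\ref{def:equivequivalence} applies.

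Finally I would verify that $\underline{\pi}_0$ is exact on the heart and reflects isomorphisms, so that the equivalence of categories is additive and in fact an equivalence of \emph{abelian} categories; the abelian structure on the heart comes from the general theory of $t$-structures, while $\mathcal{M}$ is abelian because it is a full subcategory of functors into abelian groups closed under kernels and cokernels (these being computed objectwise). Exactness of $\underline{\pi}_0$ restricted to the heart follows from the long exact sequence of homotopy Mackey functors associated to a cofiber sequence, exactly as in the non-equivariant case. The main obstacle, in my view, is not any single deep step but the bookkeeping needed to confirm that the restriction/transfer data extracted from $X$ genuinely assemble into a Mackey functor satisfying the double-coset formula, and that this assignment is natural; everything else is a formal transport of the classical argument for $h\Sp$ recalled in the paragraph preceding Proposition~\ref{prop:heartisMackey}. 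Since the statement is cited from \cite[Proposition~I.7.14]{MR866482}, I would in practice reference that source for the technical verifications and present only the construction of the two functors explicitly.
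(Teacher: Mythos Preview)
Your proposal is correct, but note that the paper does not actually supply a proof of this proposition: it is stated as a known fact with a pointer to \cite[Proposition~I.7.14]{MR866482} in the sentence immediately preceding it, and no argument is given in the paper itself. Your outline---defining the functor $\underline{\pi}_0$ from the heart to Mackey functors, constructing the inverse via equivariant Eilenberg--Mac\,Lane spectra, and checking the equivalence using the long exact sequence in homotopy Mackey functors---is precisely the standard argument behind that cited result, and you already observe as much in your final sentence. So there is no discrepancy in approach; you have simply written out what the paper leaves as a reference.
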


In particular we have an Eilenberg-Mac Lane functor $\HH\colon \mathcal{M} \rightarrow \ZhS$, taking an abelian group valued Mackey functor 
to an equivariant spectrum. Let us now explain briefly what Mackey functors are.

\subsection{Mackey functors}\label{subsec:Mackeyfunctors}

The theory of Mackey functors is very rich and we refer the interested  reader for instance to~\cite{MR1261590}; 
here we will only recall the specifics of Mackey functors for the cyclic group $\C_2$; this simpler description is extracted from the ``elementary approach" given by Ferland and Lewis~\cite[Section 1.1]{MR2025457}. Consider the additive category  $\mathcal{O}$, whose objects consists in the two transitive $\C_2$-sets, namely $\C_2/\C_2 = pt$, and $\C_2/ e = \C_2$.

The abelian groups of morphisms are generated by the identities, a \emph{transfer} map $\tr\colon \C_2 \rightarrow pt$, 
a \emph{restriction} morphism $\rho\colon pt \rightarrow \C_2$, and $\theta\colon \C_2 \rightarrow \theta$, pictured as follows, in what one can call a Lewis diagram:
\[
	\xymatrix{
		pt  \ar@/^2.0ex/[d]^{\rho}\\ 
		\C_2 \ar@/^2.0ex/[u]^{\tr} \ar@(dl,dr)^{\theta}
		}
	\]
The map $\theta$ is given by the transitive action. They are subject to the following relations:
 
\begin{enumerate} 
    \item 	$\tr \theta  = \tr$, 
    \item $\theta \rho = \rho$, 
    \item $\theta^2= Id$, 
    \item $\rho \tr = 1 + \theta$.
		\end{enumerate}

\begin{definition}\label{def:mackeyfucnt}
A \emph{Mackey functor} (for the cyclic group $\C_2$) is an additive functor $M\colon \mathcal{O} \rightarrow\mathcal{A}b$
with values in the category of abelian groups.
\end{definition}

Note that because of $(3)$, the group $M(\C_{2})$ is a $\C_{2}$-module, and $(1)$ and $(2)$ express the fact that $\rho$ and $\tr$ 
are $\C_2$-equivariant, where $M(pt)$ has a trivial action. Finally, from $(4)$ we recover from the restriction and transfer the  action of $\C_2$ on $M(\C_2)$ 
since $\theta$ is given by the map $ \rho \tr -1$. For this reason we omit the action of the map $\theta$ in the description of the Mackey functors 
in Section~\ref{sec:ordinary} hereafter.

\begin{example}\label{ex:equivariantpi}
The equivariant homotopy groups we have introduced above form a Mackey functor $\underline\pi_n$. Explicitly,
$(\underline\pi_n)_{pt} (X) = \pi_n^{\C_2}(X) =[S^n,X]^{\C_2}$ and $(\underline\pi_n)_{\C_2} (X) = \pi_n^{e}(X)=[S^n,X^u]$. The restriction
morphism is given by forgetting the $\C_2$-action. This integral grading can be extended to an $RO(\C_2)$-grading by setting 
$\pi_V^{\C_2}(X) = [S^V,X]^{\C_2}$ and $\pi_V^e(X)=[S^{\dim V},X^u]$ for any 
$V \in RO(\C_2)$.  
\end{example}

\begin{convention}
	\label{conv:rep}
	\begin{enumerate}
		\item We will denote by $1$ the trivial representation and by $\alpha$ the sign representation of $\C_2$. 
		These two representations freely generate  the representation ring $RO(\C_2)$ as an abelian group.
		
		\item Given an element $n + m \alpha \in RO(\C_2)$, its \emph{dimension} is the integer $n+m$.
		
		\item In general an $RO(\C_2)$-grading of an object will be emphasized by $\star$ and an integral grading by $\ast$. 
		For any homogeneous element $x$, its degree will be denoted by $|x|$, similarly for $n+m\alpha \in RO(\C_2)$, its dimension 
		will be denoted by $|n+m\alpha|$.  
	\end{enumerate}
\end{convention}

\begin{definition}\label{def:equivcoho}
Given a spectrum $E\in \ZS$, the associated $E$-\emph{cohomology theory} it represents is denoted $\underline{E}$ and given as follows: 
for $X \in \ZS$, $\star \in RO(\C_2)$
\begin{enumerate}
\item $ \underline{E}^\star_{pt}(X) = [ S^{-\star} \wedge X,E]^{\C_2}$, and

\medskip

\item $\underline{E}^\star_{\C_2}(X) = [S^{-\star} \wedge X \wedge \C_{2+}, E]^{\C_2}= [S^{-|\star|} \wedge X , E^u]$.
\end{enumerate}
\end{definition}

From now on we will  denote simply $E^\star(-) = \underline{E}^\star_{pt}(-)$.

\section{Geometric fixed points}
\label{sec:geometric}
A basic tool in equivariant homotopy theory is the ``isotropy separation sequence", \cite[(2.44)]{MR3505179}, which we review first.
The rest of the section is then devoted to geometric fixed points and $a$-periodicity.

\subsection{Isotropy separation sequence}\label{subsec:isotsepseq}
For each $n \in \mathbb{N}$ we have a morphism of cofiber sequences in $\ZT$, see Definition~\ref{def:representationsphere}
for the notation we use for spheres:
\begin{equation}
\label{eq:cof}
\xymatrix{
S(n\alpha)_+ \ar[d] \ar[r] &  S^0 \ar[r] \ar@{=}[d]&  S^{n\alpha} \ar[d]\\
S((n+1)\alpha)_+ \ar[r] & S^0 \ar[r] & S^{(n+1)\alpha}
}
\end{equation}
The action on $S^0$ is trivial and vertical colimits fit into a cofiber sequence:
\[
E{\C_2}_+ \rightarrow S^0 \longrightarrow \widetilde{E\C_2}
\]

\begin{remark}\label{rem:ring}
The spaces $\widetilde{E\C_2}$ and $E\C_2$ are characterized by the following universal properties:
\[
(E\C_2)^u \simeq pt, \  \quad  E\C_2^{\C_2} = \emptyset \  \quad \textrm{and} \  \quad \widetilde{E\C_2}^u \simeq pt, \  \quad \widetilde{E\C_2}^{\C_2} = S^0.
\]
In particular this characterization implies that $\widetilde{E\C_2} \wedge \widetilde{E\C_2} \simeq \widetilde{E\C_2}$ and we can use the equivalence  to define a multiplication turning 
$\widetilde{E\C_2}$ into a ring spectrum.
\end{remark}

By smashing the above cofiber sequence with a space $X \in \ZT$, we obtain a new cofibration sequence.

\begin{lemma}\label{em:univspaces}
For any $\C_2$-space $X$ there is a cofibration sequence
\[
E{\C_2}_+ \wedge X \rightarrow  X \longrightarrow \widetilde{E\C_2} \wedge X
\]
and for any map $f\colon X \rightarrow Y$ in $\ZT$, the map obtained by forgetting the action $f^u\colon X^u \rightarrow Y^u$ 
is a weak equivalence in $\T$ if and only if $E{\C_2}_+ \wedge f$ is a weak equivalence in $\ZT$. \hfill{\qed}
\end{lemma}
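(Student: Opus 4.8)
The plan is to obtain the cofibration sequence directly from the cofiber sequence $E\C_2{}_+ \to S^0 \to \widetilde{E\C_2}$ of Remark~\ref{rem:ring} by smashing with $X$. Since $\wedge X$ is a left Quillen functor (equivalently, smashing with a space preserves cofiber sequences up to weak equivalence in $\ZT$), applying it to $E\C_2{}_+ \to S^0 \to \widetilde{E\C_2}$ yields the cofibration sequence
\[
E\C_2{}_+ \wedge X \to S^0 \wedge X \to \widetilde{E\C_2} \wedge X,
\]
and $S^0 \wedge X \simeq X$ canonically. This is the first assertion. One should remark that the colimit defining $E\C_2{}_+$ and $\widetilde{E\C_2}$ in \eqref{eq:cof} is a sequential (hence homotopy) colimit along cofibrations, so smashing with $X$ commutes with it and the resulting sequence is still a genuine cofiber sequence; no derived subtlety arises since everything is in spaces.

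For the second assertion, I would argue using the long exact sequence in non-equivariant homotopy obtained by applying $(-)^u$ to the above cofibration sequence. Forgetting the action is exact (it preserves cofiber sequences) and $(\widetilde{E\C_2})^u \simeq pt$ by the universal property in Remark~\ref{rem:ring}, hence $(\widetilde{E\C_2} \wedge X)^u \simeq (\widetilde{E\C_2})^u \wedge X^u \simeq pt \wedge X^u \simeq pt$ is contractible. Therefore the map $(E\C_2{}_+ \wedge X)^u \to X^u$ is a weak equivalence in $\T$ for \emph{every} $X$. Now given $f\colon X \to Y$, naturality of the cofibration sequence gives a commuting square whose vertical maps $(E\C_2{}_+ \wedge X)^u \to X^u$ and $(E\C_2{}_+ \wedge Y)^u \to Y^u$ are weak equivalences; by two-out-of-three, $f^u\colon X^u \to Y^u$ is a weak equivalence if and only if $(E\C_2{}_+ \wedge f)^u$ is. Finally, since $E\C_2$ is built from free $\C_2$-cells, a map between free $\C_2$-spaces (or here, between $E\C_2{}_+ \wedge X$ and $E\C_2{}_+ \wedge Y$, which have free underlying action away from the basepoint) is an equivariant weak equivalence precisely when its underlying map is a weak equivalence --- because the fixed points of $E\C_2{}_+ \wedge X$ are trivial, the condition $\pi_n^{\C_2} = \pi_n(-)^{\C_2}$ in Definition~\ref{def:equivequivalence} is automatic, and only the condition on $\pi_n^e = \pi_n(-)^u$ remains. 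Hence $E\C_2{}_+ \wedge f$ is a weak equivalence in $\ZT$ iff $(E\C_2{}_+ \wedge f)^u$ is one in $\T$, iff $f^u$ is.

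I expect the only real point requiring care to be the last step: the claim that an equivariant map out of a free space is a $\ZT$-equivalence iff it is a $\T$-equivalence on underlying spaces. This is standard (it is the statement that the free-orbit condition collapses the two families of homotopy groups in Definition~\ref{def:equivequivalence} to just the underlying one), but one should either invoke it from the model-categorical setup for $\ZT$ or spell out that $(E\C_2{}_+ \wedge X)^{\C_2} = *$ so that $\pi_n^{\C_2}$ vanishes on both source and target, leaving only $\pi_n^e$ to check. Everything else --- exactness of $\wedge X$ and of $(-)^u$, the contractibility of $(\widetilde{E\C_2})^u$, and the two-out-of-three argument --- is formal.
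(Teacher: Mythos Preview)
Your proof is correct and follows the same approach as the paper. The paper itself gives no proof beyond the sentence preceding the lemma (``By smashing the above cofiber sequence with a space $X \in \ZT$, we obtain a new cofibration sequence'') and the \qed\ mark, so for the second assertion you have spelled out the standard argument that the paper leaves implicit: contractibility of $(\widetilde{E\C_2})^u$ forces $(E\C_{2+}\wedge X)^u \to X^u$ to be an equivalence, and freeness of $E\C_{2+}\wedge X$ (i.e.\ trivial fixed points) reduces the equivariant weak-equivalence condition to the underlying one.
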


Pushing the cofiber sequence given by the previous lemma in $\ZS$ we get the \emph{isotropy separation sequence}. 
For any $X \in \ZS$ there is a cofiber sequence:
\[
E{\C_2}_+ \wedge X \rightarrow  X \longrightarrow \widetilde{E\C_2} \wedge X
\]

The isotropy separation sequence in $\ZS$ has the key property that it separates a space into a  free part and a singular part, this is the
slogan in \cite[page~2]{MR1361893}. 
Indeed, since after forgetting the $\C_2$-action $E\C_2$ is contractible, and the forgetful functor sends cofiber sequences to cofiber sequences, 
for any $X \in \ZS$, $(\widetilde{E\C_2} \wedge X)^u$ is contractible, and given any morphism $f\colon \widetilde{E\C_2} \wedge X \rightarrow Y$ in $\ZS$, 
$f$ is a weak equivalence if and only if $f^{\C_2}\colon (\widetilde{E\C_2} \wedge X)^{\C_2} \rightarrow Y^{\C_2}$ is a weak equivalence in $\Sp$.

 \subsection{Geometric Fixed points}
 \label{subsec:Phi} 
 The fixed-points functor is not monoi\-dal on spectra, but there is a better behaved related functor, namely 
 the so-called  geometric fixed point functor, see \cite[Subsection~2.5.2]{MR3505179}.
 
 \begin{definition}\label{def:geofixedpoints}
The \emph{geometric fixed points} functor $\Phi^{\C_2}\colon \ZS \longrightarrow \Sp$ is given by 
 $\Phi^{\C_2}(X) = (\widetilde{E\C_2} \wedge X)^{\C_2}$.
 \end{definition} 
 
 The associated derived functor $\Phi^{\C_2} \colon h\ZS \longrightarrow h\Sp$ is abusively denoted by the same symbol and this is the one
 we will systematically use in this article. It enjoys many nice properties.
 
 \begin{proposition}\label{prop:propgeofixedpoints}
 	The functor $\Phi^{\C_2}$ has the following properties:
 	\begin{enumerate}
 		\item $\Phi^{\C_2}$ sends weak equivalences to weak equivalences and commutes with filtered colimits.
 		\item A map $f\colon X \rightarrow Y$  in $\ZS$ is a weak equivalence if and only if $\Phi^{\C_2}(f)$ and $f^u$ are weak equivalences.
 		\item For any $X \in \ZT$ we have $\Phi^{\C_2}(\Sigma^\infty_{\C_2} X) \simeq \Sigma^\infty(X^{\C_2})$.
 		\item For any $X,Y \in \Sp$ we have $\Phi^{\C_2}(X \wedge Y) \simeq \Phi^{\C_2}(X) \wedge \Phi^{\C_2}( Y)$.
 	\end{enumerate}
 \end{proposition}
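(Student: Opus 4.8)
The plan is to verify the four listed properties of $\Phi^{\C_2}$ one at a time, leaning on the fact that $\Phi^{\C_2}(X) = (\widetilde{E\C_2} \wedge X)^{\C_2}$ is a composite of two functors each of which is already understood. For property (1), I would first recall that smashing with the fixed object $\widetilde{E\C_2}$ preserves weak equivalences (on cofibrant objects, or after deriving) and commutes with filtered colimits since $\widetilde{E\C_2}$ is a filtered colimit of finite $\C_2$-spectra and the smash product is a left adjoint; then I would invoke the fact that the categorical fixed-point functor $(-)^{\C_2}$, being a right adjoint to $\iota$, preserves weak equivalences between fibrant objects and, on the subcategory of $\widetilde{E\C_2}$-modules, actually commutes with filtered colimits because on that subcategory it agrees with geometric fixed points in the sense that homotopy is detected levelwise after the $\widetilde{E\C_2}$-localization. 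The cleanest route is to cite that $\Phi^{\C_2}$ on orthogonal spectra admits a point-set model $\Phi^{\C_2}(X)_n = (X_{n(1+\alpha)})^{\C_2}$ or similar (see Schwede's notes or \cite[Subsection~2.5.2]{MR3505179}), for which both claims are immediate from the spacelevel statements that fixed points commute with filtered colimits of $\C_2$-spaces and preserve equivariant weak equivalences.

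For property (2), the ``only if'' direction is a formal consequence of (1) together with the fact that the forgetful functor $(-)^u$ preserves weak equivalences. The substantive ``if'' direction is exactly the content of the isotropy separation sequence discussion just above in the excerpt: given $f \colon X \to Y$, smash the cofiber sequence $E{\C_2}_+ \wedge X \to X \to \widetilde{E\C_2}\wedge X$ with $f$ to get a map of cofiber sequences, note that $E{\C_2}_+\wedge f$ is a weak equivalence iff $f^u$ is (by Lemma~\ref{em:univspaces}, or rather its stable analogue, since $E{\C_2}_+$ is built from free cells), and that $\widetilde{E\C_2}\wedge f$ is a weak equivalence iff its $\C_2$-fixed points $\Phi^{\C_2}(f)$ are, because $(\widetilde{E\C_2}\wedge X)^u$ is contractible so the only remaining homotopy is on fixed points. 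Then the five lemma in the triangulated category $h\ZS$ forces $f$ itself to be a weak equivalence.

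Property (3) is the assertion that $\Phi^{\C_2}$ recovers ordinary fixed points of spaces after stabilization, i.e. $\Phi^{\C_2}(\Sigma^\infty_{\C_2}X) \simeq \Sigma^\infty(X^{\C_2})$. I would prove this by first handling $X = S^V$ a representation sphere: here $\widetilde{E\C_2}\wedge \Sigma^\infty_{\C_2}S^V$ has fixed points $\Sigma^\infty S^{V^{\C_2}}$, which one sees by writing $V = V^{\C_2} \oplus (V^{\C_2})^\perp$ and noting that $\widetilde{E\C_2}$ absorbs the sign-isotypical part (the Euler class of $\alpha$ becomes invertible), formally $\widetilde{E\C_2}\wedge S^{k\alpha}\simeq \widetilde{E\C_2}$; this is exactly the $a$-periodicity alluded to in the section heading. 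For general $X$, present $\Sigma^\infty_{\C_2}X$ as a homotopy colimit of (desuspensions of) representation spheres via a $\C_2$-CW structure, use that $\Phi^{\C_2}$ commutes with the relevant colimits and cofiber sequences by (1), and use that the functor $X \mapsto X^{\C_2}$ on $\C_2$-CW complexes takes the cells $\C_2/H \times D^n$ to their fixed points. Property (4) then follows from (3) on suspension spectra together with the monoidality of the underlying point-set model of $\Phi^{\C_2}$: the lax monoidal structure map $\Phi^{\C_2}(X)\wedge \Phi^{\C_2}(Y)\to \Phi^{\C_2}(X\wedge Y)$ is a weak equivalence, which one checks on suspension spectra of $\C_2$-spaces via (3) and $(X\times Y)^{\C_2} = X^{\C_2}\times Y^{\C_2}$, and then extends to all spectra by a cell-induction/colimit argument since both sides commute with homotopy colimits in each variable.

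The main obstacle I anticipate is property (2), and more precisely making the stable analogue of Lemma~\ref{em:univspaces} fully rigorous: one must know that $E{\C_2}_+\wedge f$ being a weak equivalence is equivalent to $f^u$ being one (this uses that $E{\C_2}_+$ is a filtered homotopy colimit of finite free $\C_2$-CW complexes, so testing on the family $\{e\}$ suffices) and that $\widetilde{E\C_2}\wedge f$ being a weak equivalence is equivalent to $\Phi^{\C_2}(f)$ being one (this uses that the underlying spectrum of $\widetilde{E\C_2}\wedge X$ is contractible, so by the recollement only fixed-point information survives). Once these two facts are in hand the five lemma is automatic, but they rest on knowing how the two geometric isotropy pieces see weak equivalences, so I would state them carefully, citing \cite[Section~II.9]{MR866482} or \cite[Subsection~2.5.2]{MR3505179}, rather than re-deriving them. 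The other properties, by contrast, I expect to dispatch quickly by reduction to spaces.
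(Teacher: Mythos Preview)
Your arguments are correct, but the paper's own proof is far shorter: it simply cites Hill--Hopkins--Ravenel, taking properties (1), (3), (4) directly from \cite[Proposition~2.45]{MR3505179} and property (2) from \cite[Remark~2.46]{MR3505179}. What you have written is essentially a sketch of how those results are proved in that reference, so the mathematical content matches, but the level of detail is very different.

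A few remarks on the comparison. Your treatment of (2) via the isotropy separation sequence and the five lemma is exactly the standard argument underlying the cited remark, and you correctly flag the two ingredients that need care (that $E\C_{2+}\wedge f$ detects $f^u$ and that $\widetilde{E\C_2}\wedge f$ detects $\Phi^{\C_2}(f)$). Your argument for (3) is slightly muddled in presentation---you begin with representation spheres and then switch to orbit-type cells $\C_2/H\times D^n$---but either route works; the cleanest is to stay with the $\C_2$-CW filtration by orbit cells throughout, since $(\C_2/H)^{\C_2}$ is immediate and free cells die under $\Phi^{\C_2}$. For (4) your lax-monoidal-plus-cell-induction outline is the right shape. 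None of this is wrong, but since you yourself end by proposing to cite \cite[Subsection~2.5.2]{MR3505179} for the delicate points, you may as well do what the paper does and cite that reference for the whole proposition: these are foundational facts about $\Phi^{\C_2}$ that the paper uses but does not aim to reprove.
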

 \begin{proof}
 	Properties $(1),(3),(4)$ are directly taken from  \cite[(Proposition~2.45)]{MR3505179}. The characterization of weak equivalences is
 	a consequence of  \cite[(Remark~2.46)]{MR3505179}.
 \end{proof}

\subsection{Periodic modules}\label{subsec:permodules} 
The inclusion of fixed points gives a map $S^0 \rightarrow S^\alpha$ into the sign representation sphere, 
hence an element  $a \in \underline{\pi}_0(S^\alpha)$. The name of this map is $a_\alpha$ in \cite[Definition~3.11]{MR3505179}
or $a_\sigma$ in \cite{HillPreprint}.

This element is killed by the map $(1 +\theta)$, which corresponds to the element $[\C_2]$ in $A(\C_2) \cong \pi_0^{\C_2}(S^0)$ under the
isomorphism obtained by Segal, \cite{MR0423340}, a particular case of the tom Dieck splitting, \cite[Satz~2]{MR436177}. 
Indeed we know that this action is given by $\tr \rho -1$ (see \ref{subsec:Mackeyfunctors}). 
The equality $(1 + \theta)a=0$ now follows from the commutativity of the diagram
\[
\xymatrix{
S^0 \ar[d]^{t} \ar[dr]^{1 + \theta} & & \\
\C_{2+} \ar[r]^{\rho} & S^0 \ar[r]^a & S^{\alpha}
}
\]
where the bottom row is a cofiber sequence. 


Smashing the map $a\colon S^0 \rightarrow S^{\alpha}$ with any equivariant spectrum gives us a map $E \rightarrow S^{\alpha} \wedge E$.

\begin{definition}
\label{def:periodic}
A spectrum $E$ is $a$-\emph{periodic} if and only if the map $a \wedge E\colon E \rightarrow S^{\alpha} \wedge E$ is a weak equivalence.
\end{definition}

\begin{remark}
The class of $a$-periodic spectra appears in the literature in different forms. For instance, it corresponds to spectra which are local 
with respect to the trivial subgroup in the sense of \cite[Section 6]{MNN}. 
\end{remark}

The cohomology of an $a$-periodic spectrum is then also $a$-periodic, in the sense that the action of $a$ on the cohomology,
decreasing the $RO(\C_2)$-bidegree by $\alpha$, is an isomorphism. 
By construction, the prototypical example of an $a$-periodic spectrum is $\widetilde{E\C_2}$; 
and indeed it is the source  of all $a$-periodic spectra as the next two propositions show:

\begin{proposition}\label{prop:tildeEz2isaperiodic}
The spectrum $\widetilde{E\C_2}$ is $a$-periodic, and as a consequence  for any $X \in \ZS$, both $\widetilde{E\C_2} \wedge X$
and the functional spectrum $F_{\C_2}(\widetilde{E\C_2},X)$ are also $a$-periodic.
\end{proposition}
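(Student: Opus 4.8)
The statement has two parts: first that $\widetilde{E\C_2}$ itself is $a$-periodic, and then that smashing with any $X$ or taking a function spectrum out of $\widetilde{E\C_2}$ inherits $a$-periodicity. I would begin with the first assertion, which I expect to be the heart of the matter, and then derive the other two by formal nonsense.

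\textbf{Step 1: $\widetilde{E\C_2}$ is $a$-periodic.} The plan is to verify that $a \wedge \widetilde{E\C_2}\colon \widetilde{E\C_2} \to S^\alpha \wedge \widetilde{E\C_2}$ is a weak equivalence by checking the two conditions of Proposition~\ref{prop:propgeofixedpoints}(2): that it is a non-equivariant equivalence after applying $(-)^u$, and that it is an equivalence after applying $\Phi^{\C_2}$. For the underlying spectrum, $\widetilde{E\C_2}^u \simeq \Sigma^\infty S^0$ is contractible-free — more precisely, since $\widetilde{E\C_2}^u \simeq pt$ as a pointed space (Remark~\ref{rem:ring}), the spectrum $(\widetilde{E\C_2})^u$ is trivial, so both source and target of $a^u \wedge (\widetilde{E\C_2})^u$ are contractible and the map is trivially an equivalence. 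For the geometric fixed points, recall $\Phi^{\C_2}(S^\alpha) \simeq \Sigma^\infty((S^\alpha)^{\C_2}) = \Sigma^\infty S^0 = S^0$ by Proposition~\ref{prop:propgeofixedpoints}(3), since the fixed points of the sign-representation sphere are just $\{0,\infty\} = S^0$. Hence $\Phi^{\C_2}(S^\alpha \wedge \widetilde{E\C_2}) \simeq \Phi^{\C_2}(S^\alpha) \wedge \Phi^{\C_2}(\widetilde{E\C_2}) \simeq \Phi^{\C_2}(\widetilde{E\C_2})$ by part (4). The remaining point — and this is the step I expect to be the main obstacle — is that under this identification the map $\Phi^{\C_2}(a \wedge \widetilde{E\C_2})$ becomes the identity (or at least an equivalence). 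This amounts to the fact that $\Phi^{\C_2}(a)\colon \Phi^{\C_2}(S^0) \to \Phi^{\C_2}(S^\alpha)$, i.e.\ $S^0 \to S^0$, is an equivalence: the inclusion $S^0 \hookrightarrow S^\alpha$ restricts on $\C_2$-fixed points to the identity $S^0 \to S^0$, and geometric fixed points of a suspension spectrum are the suspension spectrum of the fixed points, compatibly with maps. So $\Phi^{\C_2}(a)$ is an equivalence, hence so is $\Phi^{\C_2}(a) \wedge \Phi^{\C_2}(\widetilde{E\C_2}) \simeq \Phi^{\C_2}(a \wedge \widetilde{E\C_2})$. Both conditions of Proposition~\ref{prop:propgeofixedpoints}(2) hold, so $a \wedge \widetilde{E\C_2}$ is a weak equivalence and $\widetilde{E\C_2}$ is $a$-periodic.

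\textbf{Step 2: the two consequences.} Once $\widetilde{E\C_2}$ is $a$-periodic, the rest is formal. For $\widetilde{E\C_2} \wedge X$, the map $a \wedge (\widetilde{E\C_2} \wedge X)$ is identified with $(a \wedge \widetilde{E\C_2}) \wedge X$, which is $(\text{weak equivalence}) \wedge \mathrm{id}_X$, hence a weak equivalence since smashing with a fixed spectrum preserves weak equivalences. For the function spectrum $F_{\C_2}(\widetilde{E\C_2}, X)$, I would argue that $a$ acts on it by precomposition with $a \wedge \widetilde{E\C_2}$ (using that $\widetilde{E\C_2} \wedge \widetilde{E\C_2} \simeq \widetilde{E\C_2}$ from Remark~\ref{rem:ring}, or more directly that the $a$-multiplication map on $F_{\C_2}(\widetilde{E\C_2},X)$ is the dual of $a \wedge -$ on the source variable): concretely, $a \wedge F_{\C_2}(\widetilde{E\C_2},X) \simeq F_{\C_2}(S^{-\alpha} \wedge \widetilde{E\C_2}, X)$, and the structure map is induced by $a \wedge \widetilde{E\C_2}\colon \widetilde{E\C_2} \to S^\alpha \wedge \widetilde{E\C_2}$, i.e.\ $S^{-\alpha}\wedge \widetilde{E\C_2} \to \widetilde{E\C_2}$ after a shift; since that map is a weak equivalence by Step 1, applying the (homotopy) functor $F_{\C_2}(-,X)$ yields a weak equivalence. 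Hence $F_{\C_2}(\widetilde{E\C_2},X)$ is $a$-periodic as well. This completes the proof.

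\textbf{Remark on the main obstacle.} The only genuinely non-formal input is the assertion in Step 1 that $\Phi^{\C_2}(a)$ is an equivalence, equivalently that $a$ becomes invertible after geometric fixed points; everything else is bookkeeping with the monoidality of $\Phi^{\C_2}$ and the detection of equivalences by $(\Phi^{\C_2}, (-)^u)$. One can alternatively phrase Step 1 as: $a$ is inverted precisely by the localization $(-) \wedge \widetilde{E\C_2}$ — this is essentially the content of the isotropy separation sequence, where $\widetilde{E\C_2} = S^0[a^{-1}]$ is the telescope of $S^0 \xrightarrow{a} S^\alpha \xrightarrow{a} S^{2\alpha} \to \cdots$, built as the colimit of the right-hand column in the diagram~\eqref{eq:cof}. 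Either route works; I would present the geometric-fixed-points version since Proposition~\ref{prop:propgeofixedpoints} has already been set up for exactly this kind of argument.
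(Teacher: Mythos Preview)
Your proof is correct. The consequences in Step~2 are handled essentially as in the paper; in particular, your identification $S^{\alpha} \wedge F_{\C_2}(\widetilde{E\C_2},X) \simeq F_{\C_2}(S^{-\alpha} \wedge \widetilde{E\C_2},X)$ is exactly what the paper establishes, though there it is phrased explicitly via the strong dualizability of $S^\alpha$ and the resulting adjunction.

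Where you genuinely diverge is in Step~1. The paper proves $a$-periodicity of $\widetilde{E\C_2}$ directly from its description as the telescope $\colim_n S^{n\alpha}$ along multiplication by $a$: smashing with $S^\alpha$ just shifts the colimit diagram by one step, and the resulting map is an equivalence by cofinality. You instead invoke the detection criterion of Proposition~\ref{prop:propgeofixedpoints}(2), checking that $a \wedge \widetilde{E\C_2}$ is an equivalence on underlying spectra (trivially, both sides vanish) and on geometric fixed points (since $\Phi^{\C_2}(a)$ is the identity on $S^0$). Both arguments are short and valid; the paper's is slightly more self-contained in that it uses only the construction of $\widetilde{E\C_2}$, while yours leans on the machinery of $\Phi^{\C_2}$ already set up. You do mention the colimit route in your closing remark, so you are aware of both; either would be acceptable to present.
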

\begin{proof}
The first statement follows from the description of $\widetilde{E\C_2}$ as a colimit and the second from the first.
To prove the third observe that  $S^{\alpha}$ is a finite $\C_2$-CW-spectrum, hence strongly dualizable by
\cite[Theorem~XVI.7.4]{MR1413302}, and its dual is $S^{-\alpha}$. Therefore the duality map is an equivalence and
we have
\[
S^{\alpha} \wedge F_{\C_2}((\widetilde{E\C_2},X) \simeq F_{\C_2}(DS^{\alpha}, F_{\C_2}(\widetilde{E\C_2},X)) \simeq F_{\C_2}(S^{-\alpha} \wedge \widetilde{E\C_2},X)
\]
where we used \cite[Corollary~XVI.7.5]{MR1413302} for the first equivalence and the second is by adjunction.
\end{proof}

\begin{proposition}\label{prop:caracaper}
Let $E \in \C_2\Sp$. Then the following are equivalent:
\begin{enumerate}[(i)]
\item $E$ is $a$-periodic.
\item The canonical map coming from the isotropy separation sequence
$E \rightarrow \widetilde{E\C_2}\wedge E$ is an equivalence.
\item The underlying non-equivariant spectrum $E^u$ is contractible.  
\item Multiplication by $a$ is an isomorphism on the $RO(\C_2)$-graded abelian group $\pi^{\C_2}_\star(E)$.
\end{enumerate}
\end{proposition}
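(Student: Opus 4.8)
The plan is to prove the chain of implications $(i)\Rightarrow(ii)\Rightarrow(iii)\Rightarrow(iv)\Rightarrow(i)$, relying throughout on the isotropy separation sequence and the already-established properties of $\widetilde{E\C_2}$.

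First, $(i)\Rightarrow(ii)$: if $E$ is $a$-periodic, then iterating the equivalence $a\wedge E\colon E\to S^\alpha\wedge E$ shows that the canonical maps $E\to S^{n\alpha}\wedge E$ are all equivalences, and passing to the filtered colimit over the tower \eqref{eq:cof} (smashed with $E$) identifies $\colim_n S^{n\alpha}\wedge E$ with $\widetilde{E\C_2}\wedge E$. Hence $E\to\widetilde{E\C_2}\wedge E$ is an equivalence. For $(ii)\Rightarrow(iii)$: apply the forgetful functor $(-)^u$, which sends cofiber sequences to cofiber sequences; by Lemma~\ref{em:univspaces} (or Proposition~\ref{prop:propgeofixedpoints}) one has $(\widetilde{E\C_2}\wedge E)^u\simeq\widetilde{E\C_2}^u\wedge E^u\simeq pt$, so if $E\to\widetilde{E\C_2}\wedge E$ is an equivalence then $E^u$ is contractible. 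For $(iii)\Rightarrow(i)$: the cofiber of $a\wedge E\colon E\to S^\alpha\wedge E$ is $\widetilde{E\C_2}\wedge E'$ for a suitable shift, or more directly one uses that $E\C_2{}_+\wedge(a\wedge E)$ is an equivalence because $a$ becomes an equivalence after smashing with the free space $E\C_2{}_+$ (the map $a$ is an $\underline\pi^e_\ast$-iso since underlying it is the identity $S^0\to S^1$ desuspended appropriately — more carefully, the cofiber $S(\alpha)_+$ of $a$ has contractible underlying space? no: $S(\alpha)_+ = \C_{2+}$, so the cofiber sequence $\C_{2+}\to S^0\xrightarrow{a}S^\alpha$ shows the cofiber of $a$ is $\Sigma\C_{2+}$, which has $E\C_2{}_+\wedge\Sigma\C_{2+}$ with underlying a wedge of two circles, not contractible). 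So instead: smashing $E$ with the cofiber sequence $\C_{2+}\to S^0\xrightarrow{a}S^\alpha$ gives a cofiber sequence $\C_{2+}\wedge E\to E\to S^\alpha\wedge E$, and $\C_{2+}\wedge E\simeq\C_{2+}\wedge E^u$ (by the free–forgetful comparison) which is contractible when $E^u$ is; hence $a\wedge E$ is an equivalence, i.e. $E$ is $a$-periodic.

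For $(iii)\Leftrightarrow(iv)$ (equivalently $(iv)$ fits into the cycle): assuming $(iii)$, multiplication by $a$ on $\underline\pi^{\C_2}_\star(E)=\pi^{\C_2}_\star(E)$ sits in a long exact sequence coming from the cofiber sequence $\C_{2+}\wedge E\to E\xrightarrow{a}S^\alpha\wedge E$, and the outer terms $\pi^{\C_2}_\star(\C_{2+}\wedge E)\cong\pi^e_\star(E)=\pi_\ast(E^u)$ vanish, so multiplication by $a$ is an isomorphism. Conversely, $(iv)\Rightarrow(i)$ directly: $a\wedge E$ is a map of equivariant spectra inducing an isomorphism on $\pi^{\C_2}_\star$; to check it is an equivalence one also needs an isomorphism on $\pi^e_\star$, but $S^\alpha$ has trivial underlying representation, so $(a\wedge E)^u$ is the identity on $E^u$ up to the canonical equivalence $(S^\alpha)^u\simeq S^1$ — desuspending, $a^u\colon S^0\to S^\alpha$ underlies an equivalence $S^0\to S^1$? it does not, $a^u$ is the inclusion of a point into $S^1$. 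So the $\pi^e$ check must be handled via $(iv)\Rightarrow(iii)$ instead: if multiplication by $a$ is an isomorphism on $\pi^{\C_2}_\star(E)$ then in the long exact sequence above $\pi^{\C_2}_\star(\C_{2+}\wedge E)=0$, i.e. $\pi^e_\star(E)=0$, i.e. $E^u$ is contractible, which is $(iii)$. This closes the loop.

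\textbf{Main obstacle.} The routine part is the diagram chase; the one place demanding care is the comparison $\colim_n(S^{n\alpha}\wedge E)\simeq\widetilde{E\C_2}\wedge E$ in $(i)\Rightarrow(ii)$, which uses that geometric realization / the defining colimit of $\widetilde{E\C_2}$ commutes with smashing with $E$ and with the relevant homotopy colimit — this is standard but should be invoked explicitly (e.g. via Proposition~\ref{prop:propgeofixedpoints}(1) or the colimit description of $\widetilde{E\C_2}$). The other subtlety, worth flagging, is that $(iv)$ is a statement only about the genuine fixed-point homotopy $\pi^{\C_2}_\star$, so one must \emph{not} try to deduce equivalence of spectra from $(iv)$ directly; the efficient route is $(iv)\Rightarrow(iii)$ through the fiber sequence $\C_{2+}\wedge E\to E\xrightarrow{a}S^\alpha\wedge E$, whose connecting map recovers multiplication by $a$, and whose $\C_{2+}\wedge E$ term computes the underlying homotopy. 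Everything else follows formally from the isotropy separation sequence and Proposition~\ref{prop:tildeEz2isaperiodic}.
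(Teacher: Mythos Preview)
Your proof is correct and follows essentially the same logic as the paper's. The one minor difference is in closing the loop at $(iii)$: the paper proves $(iii)\Rightarrow(ii)$ by checking that $E\to\widetilde{E\C_2}\wedge E$ is an equivalence on underlying spectra (trivially) and on geometric fixed points (using $\widetilde{E\C_2}\wedge\widetilde{E\C_2}\simeq\widetilde{E\C_2}$), then invoking the detection criterion Proposition~\ref{prop:propgeofixedpoints}(2); you instead go directly to $(iii)\Rightarrow(i)$ via the cofiber sequence $\C_{2+}\wedge E\to E\xrightarrow{a}S^\alpha\wedge E$ and the shearing equivalence $\C_{2+}\wedge E\simeq \C_{2+}\wedge E^u$. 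Your route is slightly more elementary since it avoids $\Phi^{\C_2}$, though it relies on the shearing isomorphism which you should state explicitly (it appears later in the paper, in the proof of Lemma~\ref{lemma:restriction}). The $(iv)\Rightarrow(iii)$ step is identical in both arguments, and your caution about not deducing equivalence of spectra from $(iv)$ alone is well placed.
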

\begin{proof}
The equivalence $(i)\Leftrightarrow (ii)$ is an immediate consequence of the description of $\widetilde{E\C_2}$ as a homotopy colimit:
\[
S^0 \stackrel{a}{\longrightarrow} S^\alpha  \stackrel{a}{\longrightarrow} S^{2\alpha} \rightarrow \cdots \rightarrow S^{n\alpha} \stackrel{a}{\longrightarrow} 
S^{(n+1)\alpha} \rightarrow \cdots
\] 
To prove $(ii)\Leftrightarrow (iii)$, observe that if $E \rightarrow \widetilde{E\C_2}\wedge E$ is an equivalence, 
then, as $(\widetilde{E\C_2} \wedge E)^u$ is contractible, see Remark~\ref{rem:ring}, 
so is $E^u$. Conversely we want to show that $E \rightarrow \widetilde{E\C_2} \wedge E$ is an equivalence. 
By assumption $E^u \rightarrow (\widetilde{E\C_2} \wedge E)^u$ is an equivalence, and since $S^0 \wedge  \widetilde{E\C_2}  \stackrel{\sim}{\rightarrow} 
\widetilde{E\C_2}  \wedge  \widetilde{E\C_2}$, we certainly have $\Phi^{\C_2}(E) \simeq \Phi^{\C_2}(\widetilde{E\C_2} \wedge E)$. We conclude  
by Proposition~\ref{prop:propgeofixedpoints} (2).  

Finally, condition $(iii)$ is equivalent to $(i)$ which implies $(iv)$, since a weak equivalence induces an isomorphism 
of homotopy groups.
Conversely, if $(iv)$ holds, apply the functor $[-,E]$ to the cofiber sequence $\Sigma^n\C_{2+} \rightarrow S^n \rightarrow S^{n+\alpha}$
from (\ref{eq:cof}) where the unit sphere $S(\alpha)$ has been identified with $\C_2$. This yields a long exact sequence
\[
\cdots \stackrel{a}{\rightarrow} \pi_{n}^{\C_2}(E) \rightarrow \pi_{n}^e(E) \rightarrow \pi_{n-1+\alpha}^{\C_2}(E) \stackrel{a}{\rightarrow}\cdots.
\]
Consequently, $(iv)$ implies that $\pi_n^e(E) = 0$ for all $n$. 
\end{proof}

We have seen in Remark~\ref{rem:ring} that, by construction, $\widetilde{E\C_2}$ is a ring spectrum and we can consider the category of $\widetilde{E\C_2}$-modules. The following corollary of Proposition~\ref{prop:caracaper} 
gives an interpretation of the $a$-periodization in these terms.

\begin{corollary}
\label{cor:aperleftadj}
The full subcategory $\C_2\Sp[a^{-1}]$ of $a$-periodic $\C_2$-equivariant spectra is equivalent to that of $\widetilde{E\C_2}$-modules.
The $a$-periodization functor, left adjoint to the forgetful functor, is given by smashing with $\widetilde{E\C_2}$.
\end{corollary}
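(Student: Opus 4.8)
The plan is to recognize $\widetilde{E\C_2}$ as an \emph{idempotent} ring spectrum and to read off both assertions from the resulting smashing localization. By Remark~\ref{rem:ring} the multiplication $\mu\colon\widetilde{E\C_2}\wedge\widetilde{E\C_2}\to\widetilde{E\C_2}$ is a weak equivalence; writing $\eta\colon S^0\to\widetilde{E\C_2}$ for the unit, that is, the canonical map coming from the isotropy separation sequence, the two unit maps $\eta\wedge\widetilde{E\C_2}$ and $\widetilde{E\C_2}\wedge\eta$ are sections of $\mu$, hence weak equivalences, and since $\mu$ is an equivalence they are both homotopic to its inverse, in particular to one another. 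This idempotence, together with the equivalent descriptions of $a$-periodicity from Propositions~\ref{prop:tildeEz2isaperiodic} and~\ref{prop:caracaper}, is the only structural input I would use.

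First I would show that $L:=\widetilde{E\C_2}\wedge-$, equipped with the natural transformation $\eta\wedge-\colon\mathrm{id}\Rightarrow L$, is left adjoint to the inclusion $\C_2\Sp[a^{-1}]\hookrightarrow\C_2\Sp$, hence is the reflection onto $\C_2\Sp[a^{-1}]$, i.e.\ the $a$-periodization. That $LX$ is $a$-periodic for every $X$ is Proposition~\ref{prop:tildeEz2isaperiodic}. For the universal property fix an $a$-periodic $E$; by Proposition~\ref{prop:caracaper}\,(ii) the map $\eta\wedge E\colon E\to\widetilde{E\C_2}\wedge E$ is an equivalence, with inverse $q$, and one checks that
\[
(\eta\wedge X)^{\ast}\colon[\widetilde{E\C_2}\wedge X,E]^{\C_2}\longrightarrow[X,E]^{\C_2}
\]
is a bijection, a two-sided inverse being $f\mapsto q\circ(\widetilde{E\C_2}\wedge f)$: that it is a section follows from naturality of $\eta\wedge-$, and that it is a retraction follows after replacing $\widetilde{E\C_2}\wedge\eta$ by $\eta\wedge\widetilde{E\C_2}$, which is legitimate by the first paragraph, and applying naturality of $\eta\wedge-$ to a map $g\colon\widetilde{E\C_2}\wedge X\to E$. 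This settles the second assertion of the corollary.

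Next I would identify $\widetilde{E\C_2}$-modules with $\C_2\Sp[a^{-1}]$. The free functor $\widetilde{E\C_2}\wedge-\colon\C_2\Sp\to\widetilde{E\C_2}\dashmod\mod$ is left adjoint to the forgetful functor $U$, with counit at $M$ the action map $\alpha\colon\widetilde{E\C_2}\wedge M\to M$. For any module $(M,\alpha)$ the unit axiom gives $\alpha\circ(\eta\wedge M)=\mathrm{id}_M$, while naturality of $\eta\wedge-$ gives $(\eta\wedge M)\circ\alpha=(\widetilde{E\C_2}\wedge\alpha)\circ(\eta\wedge\widetilde{E\C_2}\wedge M)$; here $\eta\wedge\widetilde{E\C_2}\wedge M$ is an equivalence, and $\widetilde{E\C_2}\wedge\alpha$ is a retraction with section $\widetilde{E\C_2}\wedge\eta\wedge M$, an equivalence, hence is itself an equivalence, so $(\eta\wedge M)\circ\alpha$ is an equivalence. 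Having both a left and a right inverse, $\alpha$ is an equivalence; thus the counit is a natural equivalence and $U$ is fully faithful. Its essential image contains every $\widetilde{E\C_2}\wedge X$ and, conversely, $U(M)\simeq\widetilde{E\C_2}\wedge M$ for every $M$, so it is exactly $\{E:E\simeq\widetilde{E\C_2}\wedge E\}$, which by Proposition~\ref{prop:caracaper} is $\C_2\Sp[a^{-1}]$. Therefore $U$ restricts to an equivalence $\widetilde{E\C_2}\dashmod\mod\ \stackrel{\sim}{\longrightarrow}\ \C_2\Sp[a^{-1}]$ with inverse $\widetilde{E\C_2}\wedge-$, and a routine comparison of the two adjunctions identifies this inverse with the $a$-periodization of the previous paragraph.

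I expect the main obstacle to be bookkeeping rather than anything conceptual: one must carry $\widetilde{E\C_2}$-module structures, not merely underlying spectra, through these manipulations, and verify the coherences relating the free/forgetful adjunction to the inclusion adjunction — for instance that the equivalence $M\simeq\widetilde{E\C_2}\wedge M$ upgrades to one of $\widetilde{E\C_2}$-modules, and that $\mu$ is an associative and commutative multiplication for which $\eta$ is a genuine unit. As $\widetilde{E\C_2}$ is so explicit, this can be done by hand in the model of orthogonal spectra; alternatively one may invoke the general theory of idempotent, equivalently smashing, Bousfield localizations, for which localization at the trivial subgroup in the sense of \cite[Section~6]{MNN} is the standard example.
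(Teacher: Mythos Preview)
Your proof is correct and follows essentially the same approach as the paper: both identify $a$-periodic spectra with $\widetilde{E\C_2}$-modules via Proposition~\ref{prop:caracaper}(ii) and recognize the adjunction as the free--forgetful one. The paper's proof is a terse two-liner that simply asserts this identification, whereas you have carefully unpacked the idempotent-localization machinery (verifying that the counit of the free--forgetful adjunction is an equivalence, that the forgetful functor is fully faithful, etc.); your version is what a reader would have to supply to justify the paper's sketch.
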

\begin{proof}
An $a$-periodic spectrum is an $\widetilde{E\C_2}$-module  up to homotopy by Proposition~\ref{prop:caracaper}~$(ii)$.
The adjunction we are studying here is thus simply the free-forgetful adjunction 
$S^0-\hbox{\rm mod} \rightleftarrows \widetilde{E\C_2}-\hbox{\rm mod}$.
\end{proof}

We have an equivalence $(\widetilde{E\C_2} \wedge X) \wedge \HF \simeq \widetilde{E\C_2} \wedge (X \wedge \HF)$
for any spectrum $X$ by associativity of the smash product. In other words:

\begin{lemma}\label{lem:aperprop}
The $a$-periodization functor commutes with the free (right) $\HF$-module functor.
\hfill{\qed}
\end{lemma}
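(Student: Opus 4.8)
The plan is to establish the natural equivalence
$(\widetilde{E\C_2} \wedge X) \wedge \HF \simeq \widetilde{E\C_2} \wedge (X \wedge \HF)$
as an equivalence of right $\HF$-modules, and then simply read the statement of Lemma~\ref{lem:aperprop} off it. The source of the equivalence is nothing more than the associativity (and symmetry) of the smash product on $\C_2$-spectra, which we already have from the symmetric monoidal structure on orthogonal $\C_2$-spectra (Definition~\ref{def:symmonclosed}); by Corollary~\ref{cor:aperleftadj} the $a$-periodization of a spectrum $Y$ is modeled by $\widetilde{E\C_2} \wedge Y$, so applying $a$-periodization to the free $\HF$-module $X \wedge \HF$ gives $\widetilde{E\C_2} \wedge (X \wedge \HF)$, while the free $\HF$-module on the $a$-periodization $\widetilde{E\C_2} \wedge X$ is $(\widetilde{E\C_2} \wedge X) \wedge \HF$.

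First I would spell out that both composite functors $\C_2\Sp \to \HF\text{-mod}$, namely $X \mapsto (\widetilde{E\C_2} \wedge X) \wedge \HF$ and $X \mapsto \widetilde{E\C_2} \wedge (X \wedge \HF)$, land in $\HF$-modules via the $\HF$-action on the rightmost smash factor, and that the canonical associativity isomorphism $(\widetilde{E\C_2} \wedge X) \wedge \HF \cong \widetilde{E\C_2} \wedge (X \wedge \HF)$ is a map of right $\HF$-modules because the $\HF$-action is untouched on both sides — it only permutes the position of the $\widetilde{E\C_2}$ and $X$ factors relative to each other, not relative to $\HF$. Since the smash product on spectra is its derived version, this is automatically a weak equivalence, so no cofibrancy bookkeeping is needed at the level of the homotopy category in which we work.

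Second, I would record naturality in $X$: associativity isomorphisms are natural, hence the equivalence is natural in $X$, which is all that "commutes with" requires. At this point the proof is essentially the displayed sentence already in the text, so the write-up is very short; the only thing worth making explicit is the identification of $a$-periodization with $\widetilde{E\C_2} \wedge (-)$ via Corollary~\ref{cor:aperleftadj} and of the free $\HF$-module functor with $(-) \wedge \HF$, so that the associativity equivalence literally becomes the assertion that the two functors agree.

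The main (and only mild) obstacle is purely expository: making sure the equivalence is presented as one of right $\HF$-modules rather than merely of underlying $\C_2$-spectra, and being careful that $\widetilde{E\C_2} \wedge X$ really is a cofibrant-enough model for the $a$-periodization so that smashing further with $\HF$ computes the free $\HF$-module on it. Both points are handled by the fact that we have committed to working in the homotopy category with derived smash products throughout (Definition~\ref{def:module} and the surrounding discussion), so there is no genuine difficulty — the lemma is a formal consequence of associativity of $\wedge$ together with Corollary~\ref{cor:aperleftadj}.
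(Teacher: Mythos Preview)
Your proposal is correct and follows exactly the paper's own argument: the sentence immediately preceding the lemma already states the equivalence $(\widetilde{E\C_2} \wedge X) \wedge \HF \simeq \widetilde{E\C_2} \wedge (X \wedge \HF)$ by associativity of the smash product, and the lemma is presented with a \qed as a direct restatement of that. Your additional remarks on the $\HF$-module structure and naturality are appropriate elaborations but add nothing the paper does not already treat as immediate.
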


\section{The structure of $\HF$}
\label{sec:ordinary}

In this article we work with ordinary equivariant cohomology with constant coefficients $\F$, which means
that we will use as our cohomology-defining spectrum  the Eilenberg- Mac Lane spectrum $\HH \underline \F$, where $\underline \F$ is the Mackey functor with Lewis diagram:
\[
\xymatrix{
	\F \ar@/^2.0ex/[d]^{=}\\ 
	\F \ar@/^2.0ex/[u]^{0}
}
\]
We record in this section  some of its most relevant properties for the present work for future reference and completeness. The proofs are classical.



\begin{proposition}
\label{lem:fix}
\begin{enumerate}
\item There are weak equivalences $(\HF)^{\C_2} \simeq \HH$ and $(\HF)^{u} \simeq \HH$.

\item There is a unique map $\HF \wedge \HF \rightarrow \HF$ giving $\HF$ the structure of a commutative ring spectrum.
\end{enumerate}
\end{proposition}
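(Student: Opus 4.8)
The plan is to establish each statement separately, relying on the fact that $\HF = \HH\underline{\F}$ is by definition the Eilenberg--Mac Lane spectrum associated, via the functor $\HH \colon \mathcal{M} \to \ZhS$ of Proposition~\ref{prop:heartisMackey}, to the Mackey functor $\underline{\F}$ whose Lewis diagram has both values equal to $\F$, restriction the identity, and transfer zero. The key computational input is the identification of the equivariant homotopy Mackey functor: $\underline{\pi}_n(\HF) = \underline{\F}$ for $n = 0$ and $\underline{\pi}_n(\HF) = 0$ otherwise, which is exactly the defining property of $\HH$ on the heart.

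For part~(1), I would compute the underlying spectrum $(\HF)^u$ and the categorical fixed points $(\HF)^{\C_2}$ by tracking their homotopy groups. Using Example~\ref{ex:equivariantpi} and the remarks following Definition~\ref{def:equivequivalence}, one has $\pi_n((\HF)^{\C_2}) \cong \pi_n^{\C_2}(\HF) = (\underline{\pi}_n \HF)_{pt}$ and $\pi_n((\HF)^{u}) \cong \pi_n^{e}(\HF) = (\underline{\pi}_n \HF)_{\C_2}$. Both are $\F$ concentrated in degree $0$ by the computation of $\underline{\pi}_\ast(\HF)$ above, so both spectra have the homotopy type of $\HH = \HH\F$, the non-equivariant Eilenberg--Mac Lane spectrum. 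Since an ordinary spectrum with homotopy concentrated in degree $0$ is uniquely determined up to equivalence by that group (this is the non-equivariant analogue of Proposition~\ref{prop:heartisMackey}), we obtain the claimed weak equivalences.

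For part~(2), the ring structure comes from the monoidal structure on Mackey functors: the Box product $\underline{\F} \,\square\, \underline{\F}$ is canonically isomorphic to $\underline{\F}$ (the constant Mackey functor $\underline{\F}$ is the unit for $\square$, up to the computation that the relevant transfers and restrictions match), and applying the lax monoidal Eilenberg--Mac Lane functor $\HH$ produces a multiplication $\HF \wedge \HF \to \HF$. Alternatively, and perhaps more cleanly, I would argue via connectivity: $\HF \wedge \HF$ is connective with $\underline{\pi}_0(\HF \wedge \HF) \cong \underline{\F}\,\square\,\underline{\F} \cong \underline{\F}$, so the Postnikov truncation map $\HF \wedge \HF \to \HF$ lifting the identity on $\underline{\pi}_0$ exists and is unique, and this map is the desired multiplication. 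Commutativity and associativity follow because they hold on $\underline{\pi}_0$ (where everything is determined by the symmetric monoidal structure on $\mathcal{M}$) and the higher obstruction groups vanish by coconnectivity of $\HF$.

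The main obstacle is not conceptual but bookkeeping: one must be careful that the equivalences in part~(1) are compatible with the ring structures, and that the uniqueness assertion in part~(2) is really a statement about maps in the homotopy category $h\ZS$ out of the connective spectrum $\HF \wedge \HF$ into the Eilenberg--Mac Lane spectrum $\HF$, which is governed entirely by $\hom_{\mathcal{M}}(\underline{\pi}_0(\HF \wedge \HF), \underline{\F})$ together with the vanishing of negative homotopy. Since all of this is standard Postnikov-tower theory in the $t$-structure of Proposition~\ref{prop:heartisMackey}, I would cite the classical references rather than spell it out, as the paper indicates.
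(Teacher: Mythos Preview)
The paper does not actually prove this proposition: the surrounding text says ``The proofs are classical'' and no argument is supplied. So there is nothing to compare against, and your sketch is precisely the kind of standard argument the authors are gesturing at.

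Your treatment of part~(1) is correct and is the classical proof: read off the non-equivariant homotopy groups of $(\HF)^{\C_2}$ and $(\HF)^u$ from the two levels of the Mackey functor $\underline{\pi}_*(\HF)=\underline{\F}$, and invoke the uniqueness of Eilenberg--Mac~Lane spectra.

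For part~(2) your Postnikov/$t$-structure argument is the right one and goes through. One slip: the constant Mackey functor $\underline{\F}$ is \emph{not} the unit for the box product $\square$; the unit is the Burnside Mackey functor~$\underline{A}$. What you actually need (and what is true for $\C_2$) is that $\underline{\F}$ is a commutative Green functor and that the multiplication $\underline{\F}\,\square\,\underline{\F}\to\underline{\F}$ is the unique Mackey-functor map compatible with the unit $\underline{A}\to\underline{\F}$. Since you immediately pivot to the cleaner formulation via $\underline{\pi}_0$ and the heart of the $t$-structure, the slip is harmless; just delete the parenthetical about $\underline{\F}$ being the monoidal unit.
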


We prepare now for the computation of the coefficients $\HF^\star$, the $RO(\C_2)$-graded value of the Mackey functor at the trivial orbit $pt$, evaluated on $S^0$.
Together with the Euler class $a$ introduced in Section~\ref{subsec:permodules}, the orientation class will help us organize the data.

\begin{definition}\label{def:orientation}
The \emph{orientation class} is the only non-trivial element $u \in \HF^{\alpha-1} = [S^{1-\alpha},\HF]^{\C_2} \cong \F$.
\end{definition}

The orientation class is the mod $2$ analog of the class called $u_\alpha$ in \cite[Definition~3.12]{MR3505179}, where it is constructed
at the level of equivariant cellular chains.

Let us now recall some known facts about the algebraic structure of the ring $\HF^\star$ and the Mackey functor $\underline{\HF}^\star$. 
The first proposition highlights the role played by the two classes $a$ and $u$ for the structure of the ring $\HF^\star$. From now on, we identify the element $a$ with the element also denoted by $a$ in $\HF^{-\alpha}$ that corresponds
to the composite $S^0 \rightarrow \HF \simeq S^0 \wedge \HF \xrightarrow{a \wedge \HF} S^\alpha \wedge \HF$.
There is also a more geometric approach in \cite{HillPreprint}.
Recall that if $R$ is a commutative ring and $M$ an $R$-module 
the square-zero extension of $R$ by $M$ is the ring whose underlying $R$-module is $M \oplus R$ with ring structure given by $(m,r)\cdot(n,s)= (rn + sm,rs)$.

\begin{proposition} \cite[Proposition 6.2]{HK}
\label{prop:algstrHF}
The ring $\HF^\star$ has the structure of a square-zero extension of the polynomial ring $\bF[a,u]$ by its module $M=u^{-2}\bF[a^{-1},u^{-1}]$.
\end{proposition}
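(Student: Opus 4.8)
The goal is to identify the $RO(\C_2)$-graded ring $\HF^\star$ with the square-zero extension $\bF[a,u] \oplus M$, where $M = u^{-2}\bF[a^{-1}, u^{-1}]$. The strategy is to first pin down $\HF^\star$ as an $RO(\C_2)$-graded $\bF$-vector space via an explicit computation (Stong's computation, referenced in the introduction), and then to determine the multiplicative structure. I would organize the plot of $\HF^\star$ on the $(n + m\alpha)$-lattice into two disjoint cones: the ``positive cone'', spanned by monomials $a^i u^j$ with $i, j \geq 0$ sitting in degrees $-i\alpha + j(\alpha - 1) = -j + (j-i)\alpha$, which is precisely the polynomial subring $\bF[a,u]$; and the ``negative cone'', spanned by elements $u^{-2} a^{-i} u^{-j}$ with $i, j \geq 0$, which as a graded group is exactly the $\bF[a,u]$-module $M$ (the shift by $u^{-2}$ is what places this cone in its correct location, disjoint from the positive one). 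The key input is that these two cones exhaust $\HF^\star$ and that nothing lies ``between'' them.

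Once the additive identification is in place, the ring structure is determined by three things: (1) the positive cone is the polynomial ring $\bF[a,u]$ — this follows because $a$ and $u$ are honest elements of $\HF^\star$ with no relations in the visible degrees, and products of positive-cone classes stay in the positive cone; (2) the product of two negative-cone classes vanishes — this is a degree count, since the sum of two degrees each lying strictly in the negative cone lands outside both cones, hence in a trivial group, so $M \cdot M = 0$; and (3) the $\bF[a,u]$-module structure on $M$ is the evident one, where multiplication by $a$ (resp. $u$) sends $u^{-2}a^{-i}u^{-j}$ to $u^{-2}a^{-(i-1)}u^{-j}$ (resp. $u^{-2}a^{-i}u^{-(j-1)}$), and to $0$ once the exponent would become positive. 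Point (3) is really the statement that, in the negative cone, multiplication by $a$ and by $u$ is ``invertible where it can be and zero where it must be'', which again can be read off the computation of $\HF^\star$ together with $a$-periodicity of the relevant localizations (Proposition~\ref{prop:caracaper} and the localization $\widetilde{E\C_2}\wedge\HF$).

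Concretely I would proceed as follows. First, record the additive structure of $\HF^\star$ from Stong's computation (stated in the section this excerpt is building toward), displaying it as the disjoint union of the two cones above. Second, observe that the subset of $\HF^\star$ consisting of the positive cone is closed under multiplication and, via the classes $a \in \HF^{-\alpha}$ and $u \in \HF^{\alpha - 1}$, receives a ring map from $\bF[a,u]$; check this is an isomorphism onto the positive cone by comparing Hilbert series in each degree. Third, note the positive cone is a subring and the negative cone is an ideal: the only products to analyze are (positive)$\cdot$(negative) and (negative)$\cdot$(negative); the latter is zero by the degree argument in point (2), and the former is computed by noting that the localized ring $a^{-1}u^{-1}\HF^\star$ (equivalently the coefficients of $\widetilde{E\C_2}\wedge\HF$, which is $a$-periodic by Proposition~\ref{prop:tildeEz2isaperiodic}) is the Laurent-type ring $\bF[a^{\pm 1}, u^{\pm 1}]$, inside which $M$ is the $\bF[a,u]$-submodule $u^{-2}\bF[a^{-1},u^{-1}]$ with exactly the module structure claimed. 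Assembling, $\HF^\star \cong M \oplus \bF[a,u]$ with $(m,r)\cdot(n,s) = (rn + sm, rs)$, which is the asserted square-zero extension.

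\textbf{Main obstacle.} The conceptual content is light once the additive computation of $\HF^\star$ is granted; the real work — and the step I expect to be the crux — is justifying the module structure of the negative cone $M$ over $\bF[a,u]$, i.e.\ that multiplication by $a$ and by $u$ acts on the negative cone as the ``partially invertible'' shift described in point (3), and in particular is nonzero whenever the target degree still lies in the negative cone. This is where one must genuinely use the identification of the negative cone with (a shift of) the coefficients of the $a$-inverted, $u$-inverted theory rather than just a dimension count; equivalently, one invokes that $\widetilde{E\C_2}\wedge\HF$ has coefficients $\bF[a^{\pm1},u^{\pm1}]$ and reads the module structure there. Everything else (the positive cone being polynomial, the product of two negative classes vanishing) is a bookkeeping exercise on the $RO(\C_2)$-lattice.
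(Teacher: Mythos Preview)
The paper does not give its own proof of this proposition; it is stated with a citation to Hu--Kriz \cite[Proposition~6.2]{HK} and the additive picture is then recorded in Proposition~\ref{prop:strMAckHF} (attributed to Stong). So there is no in-paper argument to compare against beyond that citation.

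That said, your proposal has two genuine gaps.

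\emph{The degree argument for $M\cdot M=0$ is wrong.} Take the generator $\theta$ at degree $2-2\alpha$ (i.e., $u^{-2}$). Then $\theta^2$ has degree $4-4\alpha$, which lies squarely inside the negative cone (more generally, if $x$ sits at $(p,q)$ and $y$ at $(p',q')$ with $p,p'\ge 2$ and $p+q,\,p'+q'\le 0$, then $p+p'\ge 2$ and $(p+p')+(q+q')\le 0$, so the product degree is again in the negative cone). So you cannot conclude $M\cdot M=0$ by a vanishing-of-the-target argument. A correct argument uses the module structure: every element of $M$ is $a$-torsion (multiplying by enough copies of $a$ pushes it above the antidiagonal, where the group is zero) and every element of $M$ is infinitely $a$-divisible (each column of the negative cone extends indefinitely downward with multiplication by $a$ an isomorphism step by step). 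Given $m,m'\in M$, choose $N$ with $a^Nm=0$ and write $m'=a^Nm''$; then $mm'=(a^Nm)m''=0$.

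\emph{The localization argument for the module structure is wrong.} You propose to read the $\bF[a,u]$-action on $M$ off inside $a^{-1}u^{-1}\HF^\star$ (or inside the coefficients of $\widetilde{E\C_2}\wedge\HF$). But $M$ is $a$-torsion, so the comparison map to any $a$-localization annihilates $M$; you cannot see the module structure there. The $a$-action on $M$ is instead forced by the very computation that produces the additive answer: the cofiber sequence $\C_{2+}\to S^0\to S^\alpha$ gives a long exact sequence in $\HF^\star$ in which the connecting map \emph{is} multiplication by $a$, so the inductive calculation of $\HF^\star$ simultaneously pins down which $a$-multiplications are isomorphisms and which are zero. (The $u$-action requires a separate argument, e.g.\ via duality or via the analogous analysis with $E\C_{2+}$; this is where the Hu--Kriz reference is doing real work.) Note also that once the module structure is known, your square-zero claim follows by the torsion/divisibility trick above --- so the logical order of your points (2) and (3) should be reversed.
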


Proposition~\ref{prop:algstrHF} gives the structure of the ring $\underline{\HF}^\star_{pt}$. To identify the Mackey functor restriction for $\HF$
we use the free-forgetful adjunction (\ref{adj:free}).

\begin{lemma}
\label{lemma:restriction}
The restriction  $\rho\colon \underline{\HF}^\star_{pt} (X) \rightarrow \underline{\HF}^\star_{\C_2} (X)$ coincides with the
morphism induced by forgetting the action ${\HF}^\star (X) \rightarrow \HH{\bF}^{\mid\star \mid} (X^u)$. As a ring
$\underline{\HF}^\star_{\C_2} (X) \cong \HH{\bF}^{*} (X^u)[u^{\pm 1}]$.
\end{lemma}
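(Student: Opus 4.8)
The plan is to identify the restriction map via the free--forgetful adjunction \eqref{adj:free} and then perform a direct computation of the right-hand side.

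First I would recall that by Definition~\ref{def:equivcoho}(2) we have $\underline{\HF}^\star_{\C_2}(X) = [S^{-\star}\wedge X \wedge \C_{2+}, \HF]^{\C_2}$, and the free--forgetful adjunction \eqref{adj:free} identifies this with $[S^{-|\star|}\wedge X, (\HF)^u]$. By Proposition~\ref{lem:fix}(1) we have $(\HF)^u \simeq \HH$, so the right-hand side is $\HH\bF^{|\star|}(X^u)$, the ordinary mod~$2$ cohomology of the underlying space. The fact that the restriction morphism $\rho$ in the Mackey functor corresponds under this identification to the map induced by forgetting the action is then a matter of tracing through the adjunction unit $X\wedge \C_{2+}\to X$ (the counit of $\C_{2+}\wedge - \dashv (-)^u$ evaluated appropriately), since $\rho$ is by definition induced by precomposition with the map $S^{-\star}\wedge X\wedge \C_{2+}\to S^{-\star}\wedge X$ collapsing $\C_{2+}\to S^0$; this is exactly the geometric description of $\rho$ recalled in Section~\ref{subsec:Mackeyfunctors}. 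So the first half of the statement is essentially a bookkeeping exercise with adjunctions.

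For the ring statement, the point is to understand the $RO(\C_2)$-grading on the underlying cohomology. Since forgetting the action sends a virtual representation $n+m\alpha$ to the trivial representation of dimension $n+m = |n+m\alpha|$, the class $S^{|\star|-\star}$ of degree-zero underlying dimension but nonzero $RO(\C_2)$-degree must become invertible after forgetting. Concretely, the orientation class $u\in\HF^{\alpha-1}$ of Definition~\ref{def:orientation} forgets to a class in $\HH\bF^{0}(\mathrm{pt}^u) = \bF$; I would check it forgets to the nonzero element, hence to a unit, using that $u$ is represented at the level of cellular chains by an orientation (as remarked after Definition~\ref{def:orientation}), or alternatively by noting that $S^{1-\alpha}$ becomes $S^0$ non-equivariantly so $[S^{1-\alpha},\HF]^{\C_2}\to [S^0,\HH]$ sends the generator to the generator. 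Meanwhile the Euler class $a\in\HF^{-\alpha}$ forgets to an element of $\HH\bF^{-1}(\mathrm{pt}^u)=0$, so $\rho(a)=0$; this is consistent with the square-zero description in Proposition~\ref{prop:algstrHF} and with the vanishing $(1+\theta)a=0$. Combining these, $\rho$ kills $a$ and all of the "negative cone" $M=u^{-2}\bF[a^{-1},u^{-1}]$ (which lies in underlying-negative or $a$-divisible degrees), inverts $u$, and restricts to an isomorphism on the $u$-powers, giving $\underline{\HF}^\star_{\C_2}(X)\cong \HH\bF^\ast(X^u)[u^{\pm 1}]$ as graded rings, the grading being $|u|=\alpha-1$ so that $u^k$ lands in underlying degree $-k$... more precisely $u$ has underlying degree $0$ and $RO(\C_2)$-degree $\alpha-1$, so $\HF^\star$ surjects onto the Laurent polynomials with $u$ recording the "$\alpha$-part" of the grading.

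The main obstacle, and the only genuinely non-formal point, is verifying that $\rho(u)$ is the \emph{nonzero} element of $\bF$ rather than zero, i.e. that $u$ survives to the underlying spectrum. I would handle this either by the cellular-chains construction of $u_\alpha$ cited after Definition~\ref{def:orientation}, where $u$ is literally a generator of the relevant chain group whose image under forgetting the action is again a generator, or by the observation that $\rho(u)\cdot\rho(u^{-1})=\rho(1)=1$ forces $\rho(u)\neq 0$ provided one knows $u$ is a unit in the localized ring $M\oplus\bF[a,u]$ — which it is, $u^{-1}$ being an explicit element of $M$ after suitable normalization. Once $\rho(u)$ is a unit, everything else follows from Proposition~\ref{prop:algstrHF}, the module structure of $M$ over $\bF[a,u]$, and the multiplicativity of $\rho$ which is immediate since $\rho$ is induced by a map of ring spectra $\HF\to F(\C_{2+},\HF)$ (equivalently, by the counit which is a map of modules over the underlying ring).
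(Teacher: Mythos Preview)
Your approach is essentially the paper's: identify $\underline{\HF}^\star_{\C_2}(X)$ with $[(S^{-\star}\wedge X)^u,\HH]$ via the free--forgetful adjunction and Proposition~\ref{lem:fix}(1), then argue that $\rho(u)$ is the unit in $\bF$. The paper is terser on the ring statement, simply noting that $u$ forgets to a nontrivial map $S^0\to\HH$, hence the unit; your first argument (that $S^{1-\alpha}$ becomes $S^0$ non-equivariantly, so the generator hits the generator) is exactly this.

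One correction: your alternative argument for $\rho(u)\neq 0$ via invertibility is wrong. The class $u$ is \emph{not} a unit in $\HF^\star$. The negative cone $M=u^{-2}\bF[a^{-1},u^{-1}]$ does not contain $u^{-1}$ (the exponents of $u$ in $M$ are $\leq -2$), and in any case $M$ is a square-zero ideal, so products landing there cannot witness invertibility. Stick with the direct argument.
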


\begin{proof}
By definition $\underline{\HF}^\star_{\C_2} (X) = [S^{-\star} \wedge X \wedge \C_{2+}, \HF]^{\C_2}$. 
Since the shearing map $S^{-\star} \wedge X \wedge \C_{2+} \rightarrow (S^{-\star} \wedge X)^u \wedge \C_{2+}$ is an equivariant weak equivalence, 
we can use the above mentioned adjunction and Proposition~\ref{lem:fix}(1) to identify
\[
\underline{\HF}^\star_{\C_2} (X) \cong [(S^{-\star} \wedge X)^u \wedge \C_{2+}, \HF]^{\C_2} \cong [(S^{-\star} \wedge X)^u, \HH]
\]
The ring structure comes for free since $u$ lives in degree $1-\alpha$ and the map induced by $u$ after  forgetting the action  is non-trivial, hence represented by
the only available unit map $S^0 \rightarrow \HH$.
\end{proof}

Except for the more modern notation $\underline{\F}$, we use the same symbols as in \cite{MR2025457} for 
the four Mackey functors that will appear in the next proposition:
\[
\xymatrix{   \textrm{Functor symbol} &\bullet & \underline{\F} &    L & L_-\\ 
 \textrm{Lewis diagram} & \bF \ar@/^/[d]	& \bF \ar@/^/[d]^{=} &  \bF \ar@/^/[d]^{ 0} & 0 \ar@/^/[d]  \\ 
 & 0 \ar@/^/[u] & \bF \ar@/^/[u]  &  \bF \ar@/^/[u]^{=}& \bF \ar@/^/[u] 
}
\]
This computation appears there in homology as \cite[Proposition~1.7 (b)]{MR2025457}, see also Lewis'
\cite[Theorem~2.1]{MR979507}, where he attributes this to unpublished work of Stong.

\begin{proposition}\label{prop:strMAckHF}
The $RO(\C_2)$-graded Mackey functor $\underline{\HF}^\star$ is represented in Figure~\ref{fig:strucHF}. 
A vertical line represents the product with the Euler class $a\colon S^0 \hookrightarrow S^{\alpha}$, which increases degree by $\alpha$. 
This product induces one of the following Mackey functor maps:
\begin{itemize}
\item[-] the identity between $ \bullet$ functors, 
\item[-] the unique non-trivial morphism $ \underline{\F} \rightarrow \bullet$,
\item[-] the unique non-trivial morphism $ \bullet \hookrightarrow L$.
\end{itemize}

\begin{figure}[h!bt]
 \centering
\definecolor{cqcqcq}{rgb}{0.75,0.75,0.75}
\definecolor{qqqqff}{rgb}{0.33,0.33,0.33}
\begin{tikzpicture}[line cap=round,line join=round,>=triangle 45,x=0.5cm,y=0.5cm]
\draw[->,color=black] (-10,0) -- (10,0);
\foreach \x in {-10,-8,-6,-4,-2,2,4,6,8}
\draw[shift={(\x,0)},color=black] (0pt,2pt) -- (0pt,-2pt) node[below] {\footnotesize $\x$};
\draw[->,color=black] (0,-10) -- (0,10);
\foreach \y in {-10,-8,-6,-4,-2,2,4,6,8}
\draw[shift={(0,\y)},color=black] (2pt,0pt) -- (-2pt,0pt) node[left] {\footnotesize $\y$};
\draw[color=black] (0pt,-10pt) node[right] {\footnotesize $0$};
\clip(-10,-10) rectangle (10,10);
\draw (-0.31,0.5) node[anchor=north west] {$\underline{ \F}$};
\draw (1.7,-1.52) node[anchor=north west] {$L$};
\draw (2.71,-2.52) node[anchor=north west] {$L$};
\draw (3.67,-3.51) node[anchor=north west] {$L$};
\draw (4.65,-4.51) node[anchor=north west] {$L$};
\draw (5.66,-5.49) node[anchor=north west] {$L$};
\draw (6.66,-6.5) node[anchor=north west] {$L$};
\draw (7.65,-7.46) node[anchor=north west] {$L$};
\draw (8.63,-8.47) node[anchor=north west] {$L$};
\draw (9.64,-9.45) node[anchor=north west] {$L$};
\draw (-1.81,1.5) node[anchor=north west] {$\underline{\F}$};
\draw (-12.25,12.46) node[anchor=north west] {$\underline{\F}$};
\draw (-11.24,11.45) node[anchor=north west] {$\underline{\F}$};
\draw (-10.28,10.49) node[anchor=north west] {$\underline{\F}$};
\draw (-9.3,9.51) node[anchor=north west] {$\underline{\F}$};
\draw (-8.29,8.5) node[anchor=north west] {$\underline{\F}$};
\draw (-7.29,7.5) node[anchor=north west] {$\underline{\F}$};
\draw (-6.3,6.51) node[anchor=north west] {$\underline{\F}$};
\draw (-5.32,5.51) node[anchor=north west] {$\underline{\F}$};
\draw (-4.31,4.53) node[anchor=north west] {$\underline{\F}$};
\draw (-3.31,3.52) node[anchor=north west] {$\underline{\F}$};
\draw (-2.4,2.49) node[anchor=north west] {$\underline{\F}$};
\draw (0,0) -- (0,-10);
\draw (2,-2) -- (2,-10);
\draw (3,-3) -- (3,-10);
\draw (4,-4) -- (4,-10);
\draw (5,-5) -- (5,-10);
\draw (6,-6) -- (6,-10);
\draw (7,-7) -- (7,-10);
\draw (8,-8) -- (8,-10);
\draw (9,-9) -- (9,-10);
\draw (10,-10) -- (10,-10);
\draw (-1,1) -- (-1,10);
\draw (-2,2) -- (-2,10);
\draw (-3,3) -- (-3,10);
\draw (-4,4) -- (-4,10);
\draw (-5,5) -- (-5,10);
\draw (-6,6) -- (-6,10);
\draw (-7,7) -- (-7,10);
\draw (-8,8) -- (-8,10);
\draw (-9,9) -- (-9,10);
\draw (-10,10) -- (-10,10);
\draw (-11,11) -- (-11,10);
\draw (9.4,0.9) node[anchor=north west] {$1$};
\draw (0.1,10) node[anchor=north west] {$\alpha$};
\begin{scriptsize}
\fill [color=qqqqff] (-1,2) circle (1.5pt);
\fill [color=qqqqff] (-1,3) circle (1.5pt);
\fill [color=qqqqff] (-1,4) circle (1.5pt);
\fill [color=qqqqff] (-1,5) circle (1.5pt);
\fill [color=qqqqff] (-1,6) circle (1.5pt);
\fill [color=qqqqff] (-1,7) circle (1.5pt);
\fill [color=qqqqff] (-1,8) circle (1.5pt);
\fill [color=qqqqff] (-1,9) circle (1.5pt);
\fill [color=qqqqff] (-1,10) circle (1.5pt);
\fill [color=qqqqff] (0,1) circle (1.5pt);
\fill [color=qqqqff] (0,2) circle (1.5pt);
\fill [color=qqqqff] (0,3) circle (1.5pt);
\fill [color=qqqqff] (0,4) circle (1.5pt);
\fill [color=qqqqff] (0,5) circle (1.5pt);
\fill [color=qqqqff] (0,6) circle (1.5pt);
\fill [color=qqqqff] (0,7) circle (1.5pt);
\fill [color=qqqqff] (0,8) circle (1.5pt);
\fill [color=qqqqff] (0,9) circle (1.5pt);
\fill [color=qqqqff] (0,-10) circle (1.5pt);
\fill [color=qqqqff] (2,-3) circle (1.5pt);
\fill [color=qqqqff] (2,-4) circle (1.5pt);
\fill [color=qqqqff] (2,-5) circle (1.5pt);
\fill [color=qqqqff] (2,-6) circle (1.5pt);
\fill [color=qqqqff] (2,-7) circle (1.5pt);
\fill [color=qqqqff] (2,-8) circle (1.5pt);
\fill [color=qqqqff] (2,-9) circle (1.5pt);
\fill [color=qqqqff] (2,-10) circle (1.5pt);
\fill [color=qqqqff] (3,-4) circle (1.5pt);
\fill [color=qqqqff] (3,-5) circle (1.5pt);
\fill [color=qqqqff] (3,-6) circle (1.5pt);
\fill [color=qqqqff] (3,-7) circle (1.5pt);
\fill [color=qqqqff] (3,-8) circle (1.5pt);
\fill [color=qqqqff] (3,-9) circle (1.5pt);
\fill [color=qqqqff] (3,-10) circle (1.5pt);
\fill [color=qqqqff] (4,-5) circle (1.5pt);
\fill [color=qqqqff] (4,-6) circle (1.5pt);
\fill [color=qqqqff] (4,-7) circle (1.5pt);
\fill [color=qqqqff] (4,-8) circle (1.5pt);
\fill [color=qqqqff] (4,-9) circle (1.5pt);
\fill [color=qqqqff] (4,-10) circle (1.5pt);
\fill [color=qqqqff] (5,-6) circle (1.5pt);
\fill [color=qqqqff] (5,-7) circle (1.5pt);
\fill [color=qqqqff] (5,-8) circle (1.5pt);
\fill [color=qqqqff] (5,-9) circle (1.5pt);
\fill [color=qqqqff] (5,-10) circle (1.5pt);
\fill [color=qqqqff] (6,-7) circle (1.5pt);
\fill [color=qqqqff] (6,-8) circle (1.5pt);
\fill [color=qqqqff] (6,-9) circle (1.5pt);
\fill [color=qqqqff] (6,-10) circle (1.5pt);
\fill [color=qqqqff] (7,-8) circle (1.5pt);
\fill [color=qqqqff] (7,-9) circle (1.5pt);
\fill [color=qqqqff] (7,-10) circle (1.5pt);
\fill [color=qqqqff] (8,-9) circle (1.5pt);
\fill [color=qqqqff] (8,-10) circle (1.5pt);
\fill [color=qqqqff] (9,-10) circle (1.5pt);
\fill [color=qqqqff] (-2,10) circle (1.5pt);
\fill [color=qqqqff] (-2,9) circle (1.5pt);
\fill [color=qqqqff] (-2,8) circle (1.5pt);
\fill [color=qqqqff] (-2,7) circle (1.5pt);
\fill [color=qqqqff] (-2,6) circle (1.5pt);
\fill [color=qqqqff] (-2,5) circle (1.5pt);
\fill [color=qqqqff] (-2,4) circle (1.5pt);
\fill [color=qqqqff] (-2,3) circle (1.5pt);
\fill [color=qqqqff] (-3,10) circle (1.5pt);
\fill [color=qqqqff] (-3,9) circle (1.5pt);
\fill [color=qqqqff] (-3,8) circle (1.5pt);
\fill [color=qqqqff] (-3,7) circle (1.5pt);
\fill [color=qqqqff] (-3,6) circle (1.5pt);
\fill [color=qqqqff] (-3,5) circle (1.5pt);
\fill [color=qqqqff] (-3,4) circle (1.5pt);
\fill [color=qqqqff] (-4,10) circle (1.5pt);
\fill [color=qqqqff] (-4,9) circle (1.5pt);
\fill [color=qqqqff] (-4,8) circle (1.5pt);
\fill [color=qqqqff] (-4,7) circle (1.5pt);
\fill [color=qqqqff] (-4,6) circle (1.5pt);
\fill [color=qqqqff] (-4,5) circle (1.5pt);
\fill [color=qqqqff] (-5,10) circle (1.5pt);
\fill [color=qqqqff] (-5,9) circle (1.5pt);
\fill [color=qqqqff] (-5,8) circle (1.5pt);
\fill [color=qqqqff] (-5,7) circle (1.5pt);
\fill [color=qqqqff] (-5,6) circle (1.5pt);
\fill [color=qqqqff] (-6,10) circle (1.5pt);
\fill [color=qqqqff] (-6,9) circle (1.5pt);
\fill [color=qqqqff] (-6,8) circle (1.5pt);
\fill [color=qqqqff] (-6,7) circle (1.5pt);
\fill [color=qqqqff] (-7,10) circle (1.5pt);
\fill [color=qqqqff] (-7,9) circle (1.5pt);
\fill [color=qqqqff] (-7,8) circle (1.5pt);
\fill [color=qqqqff] (-8,10) circle (1.5pt);
\fill [color=qqqqff] (-8,9) circle (1.5pt);
\fill [color=qqqqff] (-9,10) circle (1.5pt);
\end{scriptsize}
\end{tikzpicture}
\caption{Structure of $\underline{\HF}^\star$}\label{fig:strucHF}
\end{figure}
\end{proposition}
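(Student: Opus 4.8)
The statement is a classical computation, and my plan is to assemble it from the two structural facts already available: Proposition~\ref{prop:algstrHF}, which describes the value $\underline{\HF}^\star_{pt}=\HF^\star$ at the trivial orbit, and Lemma~\ref{lemma:restriction}, which describes the value $\underline{\HF}^\star_{\C_2}$ at the free orbit together with the restriction map. What then remains is to identify the transfer bidegree by bidegree and to read off the effect of multiplication by $a$.

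First I would make Proposition~\ref{prop:algstrHF} pictorial. Plotting a bidegree $n+m\alpha$ at the point $(n,m)$, multiplication by $a$ raises the degree by $\alpha$ and multiplication by the orientation class $u$ of Definition~\ref{def:orientation} raises it by $\alpha-1$; hence the monomials $a^iu^j$ spanning the polynomial subring $\bF[a,u]$ occupy exactly the upper cone $\{n\le 0,\ n+m\ge0\}$, the powers $u^j$ lying on the line $n+m=0$ (the \emph{spine}), while the monomials spanning the square-zero module $M=u^{-2}\bF[a^{-1},u^{-1}]$ occupy exactly the lower cone $\{n\ge 2,\ n+m\le0\}$, with the $u^{-j}$, $j\ge2$, on the spine. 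Dually Lemma~\ref{lemma:restriction} gives $\underline{\HF}^\star_{\C_2}=\bF[u^{\pm1}]$, which is one-dimensional on the spine and zero off it, and there the restriction $\rho$ is the ring map ``forget the action'', so $\rho(u)=u$ is a unit.

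Now a Mackey functor whose two values are $\F$-vector spaces of dimension at most one is one of $0,\bullet,L_-,\underline{\F},L$, and is pinned down by its two values together with $\rho$: indeed on a one-dimensional $\F$-space the map $\theta$ is the identity, so the relation $\rho\tr=1+\theta$ forces $\rho\tr=0$ in characteristic two. Off the spine both cones contribute copies of $\bullet$ and everything else vanishes, which is the outer part of Figure~\ref{fig:strucHF}. On the spine there are three cases. At a bidegree $j(\alpha-1)$ with $j\ge0$ the $pt$-value is spanned by $u^j$ and $\rho$ is an isomorphism, so, $\rho$ being injective while $\rho\tr=0$, the transfer vanishes and the Mackey functor is $\underline{\F}$. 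At the single bidegree $1-\alpha$ the $pt$-value is zero by Proposition~\ref{prop:algstrHF} while the $\C_2$-value is $\F$, so the Mackey functor is $L_-$. At a bidegree $-j(\alpha-1)$ with $j\ge2$ the $pt$-generator lies in the ideal $M$ and is divisible by $a$ — one step down the tower, at $-j(\alpha-1)-\alpha$, one is still in $M$ — whereas $\rho(a)=0$ for degree reasons, so $\rho=0$ there.

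The one step I expect not to be purely formal is that the Mackey relations alone still permit a split $\bullet\oplus L_-$ on the lower cone's spine, so one must show the transfer is nonzero there. For this I would use the long exact sequence in $\HF$-homology associated with the cofiber sequence $\C_{2+}\to S^0\xrightarrow{a}S^\alpha$ of \eqref{eq:cof} (obtained by smashing with $\HF$ and taking $\C_2$-equivariant homotopy groups, dually to the computation in the proof of Proposition~\ref{prop:caracaper}); its consecutive maps out of $\pi^e_\star(\HF)$ and $\pi^{\C_2}_\star(\HF)$ are the transfer and multiplication by $a$. At the spine bidegree $-j(\alpha-1)$, $j\ge2$, the target of the relevant $a$-multiplication is the $pt$-value in bidegree $-j(\alpha-1)+\alpha$, which lies outside both cones and so is zero; exactness then forces the transfer to surject onto the one-dimensional group $\pi^{\C_2}_\star(\HF)$, hence it is an isomorphism and the Mackey functor is $L$. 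Finally, comparing the $pt$- and $\C_2$-values of two consecutive bidegrees of a tower, together with the fact that multiplication by $a$ is an isomorphism on $pt$-parts whenever both are nonzero, identifies the induced Mackey functor map as the identity between two $\bullet$'s, as the unique nonzero map $\underline{\F}\to\bullet$ out of the bottom of an upper tower, and as the unique nonzero map $\bullet\hookrightarrow L$ into the top of a lower tower. The remainder is bookkeeping of the two cones' boundaries, which presents no difficulty.
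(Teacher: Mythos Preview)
The paper does not actually supply a proof of this proposition: it is presented as a classical computation, attributed to unpublished work of Stong and cited from \cite[Proposition~1.7(b)]{MR2025457} and \cite[Theorem~2.1]{MR979507}. So there is no argument in the paper to compare your proposal against.

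That said, your proposal is a correct and self-contained derivation, and it is worth noting how it fits into the paper's logic. You reconstruct the full Mackey-functor picture from the two pieces the paper \emph{does} establish, namely Proposition~\ref{prop:algstrHF} (the $pt$-level ring) and Lemma~\ref{lemma:restriction} (the $\C_2$-level together with the restriction), and then supply the missing transfer information. Your recognition that the Mackey relations alone leave an ambiguity between $L$ and $\bullet\oplus L_-$ on the lower spine, and your resolution of it via the long exact sequence attached to $\C_{2+}\to S^0\xrightarrow{a}S^\alpha$, is exactly the right move; the vanishing of the $pt$-value at $-j(\alpha-1)+\alpha$ for $j\ge2$ forces the transfer to surject, hence to be an isomorphism, yielding~$L$. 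One incidental observation: your analysis also picks out the lone copy of $L_-$ at bidegree $1-\alpha$, which the figure in the paper leaves undrawn (since the $pt$-value there is zero), but which is present in the Mackey-functor structure.
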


Observe in particular that at each $RO(\C_2)$-degree, the ring $\HF^\star$ is at most one dimensional over $\bF$, 
therefore this ring admits a unique homogeneous basis $h_\star$, as an $RO(\C_2)$-graded vector space,
with $|h_\star|=\star$. In terms of the preferred elements  $a$ and $u$, we have $h_{-n+(n+k)\alpha} = a^ku^{n}$.
If $X$ is a space, or a spectrum, with trivial action, then the non-equivariant $\mathbf Z$-graded cohomology ring 
$\HH\bF^*(X)$ is a subring of $\HF^\star(X)$.
The following proposition explains how this subring, together with the coefficients $\HF^\star$, determines the full $RO(\C_2)$-graded cohomology ring $\HF^\star(X)$, and even more for any equivariant spectrum it determines $\HF^\star(Y \wedge X)$ as a function of $\HF^\star(Y)$.

\begin{proposition}\label{prop:cohoesptriv}
Let $X$ be a $\C_2$-space with trivial action and $Y$ be any $\C_2$-spectrum. There is an isomorphism
\[
 \HF^\star(Y) \otimes_{\mathbf{F}} \HH^\ast(X) \cong \HF^\star(Y \wedge X).
\]
\end{proposition}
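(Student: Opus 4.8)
The plan is to reduce the whole statement to a single fact: for a trivial‑action $\C_2$‑space $X$, the right $\HF$‑module $X\wedge\HF$ is \emph{free}, i.e.\ a wedge of integral suspensions of $\HF$, indexed by a homogeneous $\F$‑basis of $\HH^\ast(X)$. Granting this, the Proposition follows by smashing with $Y$ and using the free–forgetful adjunction for $\HF$‑modules; the isomorphism produced will be exactly the multiplication pairing $\alpha\otimes x\mapsto\alpha\cdot x$ (using the inclusion $\HH^\ast(X)\subseteq\HF^\star(X)$ recalled just before the statement).

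\emph{The key step} is therefore: if $X$ carries the trivial $\C_2$‑action, then $X\wedge\HF\simeq\bigvee_{j\in J}\Sigma^{d_j}\HF$ as right $\HF$‑modules, where $(d_j)_{j\in J}$ lists, with multiplicities, the degrees of a homogeneous $\F$‑basis $(x_j)$ of $\HH^\ast(X)$. To prove it I compute the homotopy Mackey functor $\underline{\pi}_\ast(X\wedge\HF)$. Choose a $\C_2$‑CW structure on $X$ all of whose cells are trivial (take a CW structure on the underlying space and give each cell the trivial action). Since $\HF=\HH\underline{\F}$ lies in the heart of the $t$‑structure of Proposition~\ref{prop:heartisMackey}, so that $\underline{\pi}_q(\HF)=0$ for $q\neq0$, the cellular (Atiyah–Hirzebruch) spectral sequence attached to this filtration, with $E^2$‑term the Bredon homology $\underline{H}_p\big(X;\underline{\pi}_q(\HF)\big)$, is concentrated on the line $q=0$, hence collapses without extension problems; and the Bredon homology of a trivial‑action complex with coefficients in $\underline{\F}$ is singular homology tensored with the Mackey functor $\underline{\F}$ (whose restriction map is the identity). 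Thus
\[
\underline{\pi}_\ast(X\wedge\HF)\;\cong\;\HH_\ast(X)\otimes_\F\underline{\F}\;\cong\;\bigoplus_{j\in J}\Sigma^{d_j}\underline{\F}.
\]
Lifting the dual‑basis elements to classes $\xi_j\in\pi^{\C_2}_{d_j}(X\wedge\HF)$ and extending each $\HF$‑linearly gives a map of right $\HF$‑modules $\bigvee_{j\in J}\Sigma^{d_j}\HF\to X\wedge\HF$ which, by construction, is an isomorphism on $\underline{\pi}_\ast$, hence a weak equivalence by Definition~\ref{def:equivequivalence}.

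Now I deduce the Proposition. Smashing the equivalence just obtained with $Y$ and using associativity of $\wedge$ yields an equivalence of right $\HF$‑modules $(Y\wedge X)\wedge\HF\simeq\bigvee_{j\in J}\Sigma^{d_j}(Y\wedge\HF)$. The free–forgetful adjunction identifies $\HF^\star(-)$ with $[\,(-)\wedge\HF,\Sigma^\star\HF\,]$ computed in the homotopy category of right $\HF$‑modules, so
\[
\HF^\star(Y\wedge X)\;\cong\;\Big[\,\textstyle\bigvee_{j}\Sigma^{d_j}(Y\wedge\HF),\ \Sigma^\star\HF\,\Big]\;\cong\;\prod_{j\in J}\HF^{\,\star-d_j}(Y).
\]
When $X$ is finite the index set $J$ is finite, so this product is the direct sum $\bigoplus_j\HF^{\star-d_j}(Y)\cong\HF^\star(Y)\otimes_\F\HH^\ast(X)$; unravelling the identifications shows that the composite isomorphism is the multiplication map of the statement. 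For general $X$ of finite type one writes $X=\colim_\lambda X_\lambda$ over its finite subcomplexes: finiteness of $\HH^\ast(X)$ in each degree makes the relevant tower Mittag–Leffler, kills the $\lim^1$ terms, and lets $\HF^\star(Y)\otimes_\F(-)$ commute with the inverse limit degreewise, so the isomorphism passes to $X$.

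\emph{The main obstacle} is the freeness in the key step. Equivariantly $\HF$ is far from a field spectrum — generic $\HF$‑modules are not free — but a trivial‑action space only probes $\HF$ through the heart of its $t$‑structure, where $\HF$ behaves like the non‑equivariant $\HH\F$; the collapse of the cellular spectral sequence is precisely what turns this heuristic into the computation $\underline{\pi}_\ast(X\wedge\HF)\cong\HH_\ast(X)\otimes_\F\underline{\F}$. The only other point requiring care is the passage to the colimit, which is what pins finite type down as the natural hypothesis on $X$.
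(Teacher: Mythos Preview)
Your argument is correct but follows a different route from the paper. The paper simply observes that, for trivial $X$, classes in $\HH^\ast(X)$ lift canonically to $\HF^{\ast+0\alpha}(X)$ via the adjunction $\iota\dashv(-)^{\C_2}$; smashing with the ring structure of $\HF$ then gives a comparison map $\HF^\star(Y)\otimes_\F\HH^\ast(X)\to\HF^{\star}(Y\wedge X)$ which, for fixed $\star$ and $Y$, is a natural transformation of cohomology theories in the variable $X$. It is the identity for $X=S^0$, so the Eilenberg--Steenrod axioms finish the proof in one line.

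Your approach instead proves the stronger statement that $X\wedge\HF$ is a \emph{free} $\HF$-module when $X$ has trivial action, via the collapse of the cellular spectral sequence, and then reads off the cohomology isomorphism from the free--forgetful adjunction. This is more work but yields more: the freeness of $X\wedge\HF$ is interesting in its own right and foreshadows the purity arguments later in the paper. You are also more honest about the product-versus-sum issue for infinite $X$, which the paper's appeal to ``Eilenberg--Steenrod'' glosses over (the functor $\HF^\star(Y)\otimes_\F(-)$ does not in general satisfy the wedge axiom). Note, however, that the paper's statement carries no finite-type hypothesis on $X$, so your Mittag--Leffler step establishes slightly less than what is asserted---though in every application in the paper $X$ is indeed of finite type.
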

\begin{proof} Since the $\C_2$-space $X$ has a trivial action, all maps $\Sigma^nX \rightarrow \HF$ factor through the fixed points of the spectrum 
	$\HF$, giving cohomology classes $[\Sigma^n X,\HH]$, see (\ref{adj:fix}). Thus, we have an isomorphism
	\begin{equation*}
	\HH^*(X) \rightarrow \HF^{* + 0\alpha}(X),
	\end{equation*}
	where $\HF^{* + 0\alpha}(X)$ denotes the graded abelian subgroup of $\HF^{\star}(X)$ consisting in classes whose 
	degree is a multiple of the trivial representation.
	
	The smash product of maps and the pairing  from Proposition~\ref{lem:fix}(2), gives a comparison morphism
	\[
	\HF^\star(Y) \otimes_{\mathbf{F}} \HH^\ast (X) \rightarrow \HF^{\star+ \ast}(Y \wedge X).
	\]
	Fixing the virtual representation $ \star$ and the spectrum $Y$ this is a natural transformation between cohomology theories on spaces. For $x=S^0$, it is clearly the identity on $\HF^\star(Y)$ and
	we conclude that this is an isomorphism for any $X$ from the Eilenberg-Steenrod axioms.
\end{proof}

 When $X$ is a finite $\C_2$-space, one
could alternatively prove the statement  using the dual K\"unneth spectral sequence \cite[Theorem~1.6]{MR2205726} studied by Lewis and Mandell.

%

Here is an example of a computation with the geometric fixed points functor; it is certainly a folklore result but since we need it explicitly 
we provide a complete proof, which is a mod $2$ analog of the proof of \cite[Proposition~3.18]{MR3505179}. This splitting also appears in
Behrens and Wilson's \cite[Section~2]{MR3856165}.

\begin{proposition}\label{prop:geofixehf}
There is an $\HH$-linear splitting of the geometric fixed points of the Eilenberg-Mac Lane spectrum $\HF$:
\[
\HH[b] := \Phi^{\C_2}(\HF) \simeq \bigvee_{k \in \mathbb{N}} S^k \wedge \HH.
\]
\end{proposition}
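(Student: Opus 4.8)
The plan is to compute $\Phi^{\C_2}(\HF)$ by exploiting the ring structure together with the description of the equivariant coefficients in Proposition~\ref{prop:algstrHF}, and then to upgrade the computation of homotopy groups to an honest splitting as a wedge of Eilenberg-Mac Lane spectra. First I would recall that $\Phi^{\C_2}(\HF) = (\widetilde{E\C_2} \wedge \HF)^{\C_2}$, so by Proposition~\ref{prop:propgeofixedpoints}(4) (or rather, since $\HF$ is an equivariant spectrum, directly from the definition and $a$-periodicity of $\widetilde{E\C_2}$) one has $\pi_\ast \Phi^{\C_2}(\HF) \cong a^{-1}\HF^{-\star}$ restricted to the line where the $a$-inverted coefficients land in integer degree. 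Concretely, inverting $a$ in $\HF^\star = \bF[a,u] \oplus u^{-2}\bF[a^{-1},u^{-1}]$ collapses the square-zero summand appropriately and yields $\bF[a^{\pm 1}, u]$; the geometric fixed points see the portion sitting in integer degrees, which after dividing out the invertible $a$ gives $\bF[u]$ with $u$ in degree $1$ (recall $|u| = \alpha - 1$, so $a^{-1}u$ sits in integer degree $1$). Hence $\pi_\ast \Phi^{\C_2}(\HF) \cong \bF[b]$ with $|b| = 1$, which already explains the notation $\HH[b]$.

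Next I would promote this to a module statement. Since $\HF$ is a commutative ring spectrum by Proposition~\ref{lem:fix}(2) and $\Phi^{\C_2}$ is strong monoidal, $\Phi^{\C_2}(\HF)$ is a commutative ring spectrum, and since $(\HF)^{\C_2} \simeq \HH$ maps to $\Phi^{\C_2}(\HF)$ via the natural map $(\HF)^{\C_2} \to (\widetilde{E\C_2}\wedge \HF)^{\C_2}$, we get a ring map $\HH \to \Phi^{\C_2}(\HF)$, so $\Phi^{\C_2}(\HF)$ is an $\HH$-algebra. Now I invoke the general fact that every $\HH$-module (in fact every module over $\HH = \HH\bF$) splits as a wedge of suspensions of $\HH$ — this is exactly the equivariant-free analog of \cite[Lemma~II.6.1]{MR1324104} cited in the introduction, applied non-equivariantly. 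Thus there exist a set $I$ and integers $n_i$ with $\Phi^{\C_2}(\HF) \simeq \bigvee_{i\in I} \Sigma^{n_i}\HH$ as $\HH$-modules; comparing homotopy groups with the computation $\pi_\ast \cong \bF[b]$, $|b|=1$, forces $I = \mathbb{N}$ and $n_k = k$, giving the desired equivalence $\Phi^{\C_2}(\HF) \simeq \bigvee_{k\in\mathbb{N}} S^k \wedge \HH$.

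I expect the main obstacle to be the bookkeeping in the first step: carefully identifying which classes in $a^{-1}\HF^\star$ survive into $\pi_\ast$ of the geometric fixed points and pinning down the internal degree of the generator. One has to be careful that $\Phi^{\C_2}(\HF)$ is the $\C_2$-fixed points of the $a$-periodization, so its homotopy in integer degree $n$ is $\pi_n^{\C_2}(\widetilde{E\C_2}\wedge\HF) = \HF^{-n}(\widetilde{E\C_2})$, and then to read off from Figure~\ref{fig:strucHF} (equivalently from Proposition~\ref{prop:algstrHF}) that inverting $a$ leaves precisely a polynomial algebra on the class $a^{-1}u$ of degree $1$, with no contribution from the square-zero ideal once $a$ is inverted. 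The splitting step itself is formal given the cited structural result about $\HH\bF$-modules, and the final degree-matching is immediate. A cleaner alternative for the first step, which I might present instead, is to note $\widetilde{E\C_2} = \colim_n S^{n\alpha}$ so that $\Phi^{\C_2}(\HF) = \colim_n (S^{n\alpha}\wedge\HF)^{\C_2} = \colim_n \Sigma^{-n}\widetilde{\HH\bF^\ast}(S(n\alpha)_+\text{-cofiber})$ and compute directly using the cells of $S^{n\alpha} = (\mathbb{RP}^{n-1}\text{-stuff})$; but the $a$-localization argument is shorter and I would favor it.
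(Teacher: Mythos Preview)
Your proposal is correct and follows essentially the same strategy as the paper: endow $\Phi^{\C_2}(\HF)$ with an $\HH$-module structure via the map $(\HF)^{\C_2}\simeq \HH \to \Phi^{\C_2}(\HF)$, invoke the splitting of $\HH$-modules into wedges of suspensions of $\HH$ (the paper cites Robinson rather than Adams), and then identify the suspensions by computing $\pi_\ast\Phi^{\C_2}(\HF)$. The only cosmetic difference is in that last step: the paper computes $\pi_k$ directly from the colimit $\widetilde{E\C_2}=\colim_n S^{n\alpha}$ and reads off the answer from the table of coefficients, whereas you phrase it as inverting $a$ in $\HF^\star$ and restricting to integer degrees---this is exactly the viewpoint the paper records immediately afterwards in Remark~\ref{rem:geofixehf}.
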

\begin{proof}
Proposition~\ref{lem:fix}(1) identifies the fixed points of $\HF$ as $\iota \HH$ and we choose a cellular model to make  sure
that $\Phi^{\C_2} (\iota \HH) \simeq \HH$ by \cite[Proposition~B.182]{MR3505179}.
The inclusion of fixed points $\iota \HH \rightarrow \HF$ can then be seen as a map of (right) $\iota \HH$-modules,
hence becomes after applying $\Phi^{\C_2}$ a map of $\HH$-modules.

Robinson, \cite{MR910828}, proved that $\HH$-modules split as products of Eilenberg-Mac Lane spectra. It is thus enough to compute 
the homotopy groups $\pi_\ast(\Phi^{\C_2}(\HF))$ to identify the homotopy type of $\Phi^{\C_2}(\HF)$. Let $k$ be an integer. 
 We use first the adjunction(\ref{adj:fix}), 
\[
	\pi_k(\Phi^{\C_2}(\HF))  =  [S^k, (\widetilde{E\C_2} \wedge \HF)^{\C_2}]  \cong [\iota S^k, \widetilde{E\C_2} \wedge \HF]^{\C_2}  
	= [\iota S^k, \colim_{n} S^{n\alpha}\wedge \HF ]^{\C_2}.
\]	
We then pull out the colimit by compactness and get	
\[
\colim_n[\iota S^k,S^{n\alpha}\wedge \HF ]^{\C_2}  \cong  \colim_i[S^{k-(k+i)\alpha}, \HF ]^{\C_2}  \cong \bF\{b^k\} \cong \F  \textrm{ for $k \geq 0$},
\]
 where we observe that all morphisms in the colimit are isomorphisms for $i\geq 0$ and 
the last identification follows from the computation of $\HF^\star$ in Proposition~\ref{prop:strMAckHF}.
The meaning of the symbol $b^k$ is explained in the next remark.
\end{proof}

\begin{remark}\label{rem:geofixehf}
A more complete computation, as carried out in \cite[Section~2]{MR3856165} for instance, shows that 
\[
\pi^{\C_2}_\star(\Phi^{\C_2}(\HF)) \cong \F[a^{\pm 1},u].
\]
In integral degree we have thus $\pi_\ast(\Phi^{\C_2}(\HF)) \cong \F[a^{-1}u]$, and here the degree is homological, hence $a^{-1} u$ has degree one.
The element $b^k$ in our proof stands for $(a^{-1}u)^k$, and is, as expected, a polynomial generator.

We record also from the above computation and for future use that, for any $i \geq 0$, the class 
$h_{(k+i)\alpha -k} = a^{i}u^{k} \in \HF^{-k+(k+i)\alpha}$
corresponds to the summand in degree $k$ in the wedge decomposition $\Phi^{\C_2}(\HF) \simeq \bigvee_{k \in \mathbb{N}} S^k \wedge \HH$
in the sense that when we consider $a^{i}u^k$ as a map $S^{k-(k+i)\alpha} \rightarrow \HF$, the geometric fixed points 
$\Phi^{\C_2}(a^{i}u^{k})\colon S^k \rightarrow \HH[b]$ is the non-trivial map that picks up the component in degree~$k$.
\end{remark}

\section{Homologically pure equivariant spaces}\label{sec:pureimpliesconj}
Inspired by the definition of purity given by Hill, Hopkins, and Ravenel in \cite[Definition~4.56]{MR3505179}, we define homologically
pure $\C_2$-equivariant spaces in this section and proceed to show that they are conjugation spaces. This means that we have to
explain how to construct a conjugation frame out of the equivariant data at hand.

\subsection{Definition and basic properties}\label{subsec:def}

 Recall that a space $X$ is of \emph{finite type} if its cohomology $\HH^\ast(X)$ is of finite type.

\begin{definition}\label{def:homolpurity}
An equivariant space is \emph{homologically pure} if there exist a set $I$, natural numbers $n_i$ for any $i \in I$, and
a weak equivalence of $\HF$-modules
\[
  X \wedge \HF\simeq \bigvee_{i \in I}\Sigma^{n_i(1+\alpha)}\HF.
\]
 We say moreover that $X$ is of \emph{finite type} if the set $\{ i \in I \ | \ n_i =n \}$ is finite for each natural number~$n$.
\end{definition}

In other words $ X \wedge \HF$ splits as a wedge of pure slices,  and if of finite type, there are finitely many in each degree. 
For a more thorough discussion of freeness  and finite type see~\cite[Definition~2.27]{HillPreprint}.
Observe that a homologically pure space is a space such that the free $\HF$-module it generates is built out of conjugation spheres.

\begin{example}\label{ex:spherical}
Spherical conjugation spaces as introduced in \cite[Section~5.2]{MR2171799} are examples of pure spaces. 
Indeed, if $X$ is a spherical conjugation space, one can consider the spectral sequence 
associated with the cellular decomposition of $X$, that is the skeletal filtration, which computes equivariant homology. 
This is the spectral sequence considered 
in \cite[2.24]{HK}. It collapses for degree reasons, since the homology of a point is zero in the quadrant $n + m \alpha$, for $n,m > 0$, see
Proposition~\ref{prop:strMAckHF}. 
Consequently, $(X \wedge \HF)_{\star}$ is free over $\HF_{\star}$, and the classical argument from \cite[Lemma~II.11.1]{MR1324104} shows that
the spectrum $X \wedge \HF$ splits as a wedge of suspensions of $\HF$ by multiples of the regular representation.

Alternatively, when $X$ is finite, one could use first the very general splitting result \cite[Theorem~6.13]{Clover} and identify
afterwards the bidegrees of the corresponding representation spheres.
\end{example}

Let us first list basic properties of homologically pure spaces. 

\begin{lemma}\label{lem:relequivnonequiv}
	Let $X$ be a homologically pure space of finite type,  with $X \wedge \HF \simeq \bigvee_{i \in I}\Sigma^{n_i(1+\alpha)}\HF$.  Then
	\begin{enumerate}
		\item for any $\star\in RO(\C_2)$ we have an equivalence of function spectra:
		\[
		F_{\C_2}(S^{-\star}\wedge X,\HF) \simeq \bigvee_{i \in I}F_{\C_2}(S^{-\star+n_i(1+\alpha)},\HF)
		\]
		\item the ordinary mod $2$ cohomology of the space $X^u$ is concentrated in even degrees~$2n_i$;
		\item we have an isomorphism $\HF^\star (X) \cong \bigoplus_{i \in I} \HF^\star\{x_i\}$, where $|x_i| = n_i(1 + \alpha)$;
		\item the restriction map in the Mackey functor structure  induces an isomorphism $\HF^{\ast(1+\alpha)}(X) \rightarrow \HH^{2\ast}(X^u)$.
	\end{enumerate}
\end{lemma}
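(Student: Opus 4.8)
The plan is to derive all four statements from the single hypothesis $X \wedge \HF \simeq \bigvee_{i \in I}\Sigma^{n_i(1+\alpha)}\HF$ together with the computation of the coefficients $\HF^\star$ from Proposition~\ref{prop:strMAckHF} and the identification of the restriction map from Lemma~\ref{lemma:restriction}. For part (1), I would apply the contravariant functor $F_{\C_2}(S^{-\star}\wedge -, \HF)$ to the given equivalence; since smashing with $S^{-\star}$ is an equivalence and $F_{\C_2}(-,\HF)$ sends wedges (i.e.\ coproducts) to products, and since $X$ is of finite type so that in each degree only finitely many summands contribute (making the product a wedge), one gets $F_{\C_2}(S^{-\star}\wedge X,\HF) \simeq \bigvee_{i \in I}F_{\C_2}(S^{-\star+n_i(1+\alpha)},\HF)$. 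The finite type hypothesis is exactly what is needed to pass from $\prod$ to $\bigvee$; I expect this is the only subtle point in (1).

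For part (2), I would forget the $\C_2$-action. By Proposition~\ref{lem:fix}(1) we have $(\HF)^u \simeq \HH$, and the underlying spectrum of $S^{n_i(1+\alpha)}$ is $S^{2n_i}$ since $|n_i(1+\alpha)| = 2n_i$. Hence $(X \wedge \HF)^u \simeq X^u \wedge \HH \simeq \bigvee_i \Sigma^{2n_i}\HH$, so $\HH^\ast(X^u)$ is concentrated in the even degrees $2n_i$ (and, with the finite type hypothesis, finitely generated there). For part (3), I would compute $\HF^\star(X) = [S^{-\star}\wedge X, \HF]^{\C_2}$ directly from the splitting: this is $\bigoplus_{i\in I}[S^{-\star+n_i(1+\alpha)},\HF]^{\C_2} = \bigoplus_{i\in I}\HF^{\star - n_i(1+\alpha)}$, which is the stated free module $\bigoplus_i \HF^\star\{x_i\}$ with $|x_i| = n_i(1+\alpha)$; again finite type guarantees the sum is a direct sum in each degree. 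One should note the evaluation $\HF^\star(X)$ lands in $[-,\HF]^{\C_2}$, matching Definition~\ref{def:equivcoho}.

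For part (4), the point is that restricting attention to $RO(\C_2)$-degrees of the form $\ast(1+\alpha)$ kills all the "negative cone" contributions. Using part (3), $\HF^{\ast(1+\alpha)}(X) \cong \bigoplus_i \HF^{\ast(1+\alpha) - n_i(1+\alpha)} = \bigoplus_i \HF^{(\ast - n_i)(1+\alpha)}$; but $\HF^{m(1+\alpha)} = \HF^{m+m\alpha}$ is $\F$ when $m \geq 0$ (spanned by $a^0 u^{\,?}$... — more precisely by $h_{m+m\alpha}$, which in the notation after Proposition~\ref{prop:strMAckHF} is a power of a product of $a$ and $u$ lying on the diagonal) and $0$ when $m < 0$, by the shape of Figure~\ref{fig:strucHF} (the diagonal line through the origin meets only $\underline{\F}$'s and $L$'s, and the $pt$-value $\HF^\star$ is nonzero along the positive diagonal). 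So $\HF^{\ast(1+\alpha)}(X) \cong \bigoplus_{n_i \leq \ast}\F$. On the other side, by part (2), $\HH^{2\ast}(X^u) \cong \bigoplus_{n_i = \ast}\F \oplus (\text{lower } n_i\text{ contributions in the same degree})$ — wait, more carefully: $\HH^{2\ast}(X^u) = \bigoplus_{i : n_i = \ast}\F$. I would then check that the Mackey-functor restriction $\rho\colon \HF^\star(X)\to \underline{\HF}^\star_{\C_2}(X)$, identified in Lemma~\ref{lemma:restriction} with the forgetful map to $\HH\bF^\ast(X^u)[u^{\pm 1}]$, carries the diagonal class $x_i$ to a unit multiple of $u^{n_i}$ times the corresponding generator of $\HH^{2n_i}(X^u)$, so that in total degree $\ast(1+\alpha)$ it restricts to the generator of $\HH^{2\ast}(X^u)$ coming from that same summand and kills the rest. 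The main obstacle, and the step I would spend the most care on, is precisely this last verification: that on each free summand $\HF^\star\{x_i\}$ the restriction is an isomorphism onto the $u$-local line in the expected degree and that, upon intersecting with the degrees $\ast(1+\alpha)$, the contributions match up bijectively with the generators of $\HH^{2\ast}(X^u)$ found in part (2) — i.e.\ that the ``$L$-part'' of $\HF^\star$ along the diagonal maps isomorphically under restriction while the ``$\underline{\F}$-part'' is exactly what survives to build $\HH^{2\ast}(X^u)$. This is bookkeeping with Figure~\ref{fig:strucHF} and Lemma~\ref{lemma:restriction}, but it is where the real content of (4) sits.
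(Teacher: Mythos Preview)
Your plan for (1)--(3) is essentially the paper's. One small point on (1): you cannot literally apply $F_{\C_2}(S^{-\star}\wedge -,\HF)$ to the hypothesis, because the hypothesis is an equivalence of $\HF$-modules $X\wedge\HF\simeq\bigvee_i\Sigma^{n_i(1+\alpha)}\HF$, not an equivalence about $X$ itself. The paper passes through the free--forgetful adjunction $F_{\C_2}(Y,\HF)\simeq F_{\HF\text{-}\mathrm{mod}}(Y\wedge\HF,\HF)$ first, then uses the splitting, then comes back. With that adjustment your (1) is correct, and (2), (3) match the paper.

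In (4) there is a genuine error. You assert that $\HF^{m(1+\alpha)}=\HF^{m+m\alpha}$ equals $\F$ for all $m\geq 0$. This is false: looking at Proposition~\ref{prop:strMAckHF} (and Figure~\ref{fig:strucHF}), the polynomial generators satisfy $|a|=\alpha$ and $|u|=\alpha-1$, so $a^iu^j$ has bidegree $-j+(i+j)\alpha$ with $j\geq 0$; hence the positive cone lives entirely in the region with nonpositive integer part. The negative cone lives in the opposite quadrant. The line $\{m+m\alpha : m\in\Z\}$ therefore meets the nonzero part of $\HF^\star$ \emph{only} at $m=0$. You have confused the diagonal $\{(m,m)\}$ with the anti-diagonal $\{(-m,m)\}$ on which the $\underline{\F}$'s and $L$'s of the figure lie. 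This is why your computation of $\HF^{\ast(1+\alpha)}(X)$ comes out as $\bigoplus_{n_i\leq\ast}\F$, which you then cannot match with $\HH^{2\ast}(X^u)=\bigoplus_{n_i=\ast}\F$, and the argument stalls.

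Once you correct this, the bookkeeping you were worried about disappears: using (3), $\HF^{\ast(1+\alpha)}(X)\cong\bigoplus_i\HF^{(\ast-n_i)(1+\alpha)}$ reduces to $\bigoplus_{i:\,n_i=\ast}\HF^0\cong\bigoplus_{i:\,n_i=\ast}\F$, and by (2) this is exactly $\HH^{2\ast}(X^u)$. The restriction map respects the splitting, and on each surviving summand it is the restriction $\underline{\HF}^0_{pt}\to\underline{\HF}^0_{\C_2}$, which is the identity by the definition of the constant Mackey functor $\underline{\F}$. This is precisely the paper's argument, and there is no ``$L$-part'' to worry about along this particular line.
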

\begin{proof}
 (1) Since $X$ is homologically pure and $\HF$ is a ring spectrum, we use first the free-forgetful adjunction:
\[
F_{\C_2}(S^{-\star} \wedge X,\HF)  \simeq  F_{\HF-mod}(\Sigma^{-\star} \HF \wedge X,\HF) \simeq 
F_{\HF-mod}(\bigvee_{i \in I }S^{-\star + n_i(1+\alpha)} \wedge \HF,\HF)
\]
We go back via the free-forgetful adjunction and get:
\[
F_{\C_2}(\bigvee_{i \in I}S^{-\star+n_i(1+\alpha)},\HF)
 \simeq \prod_{i \in I}F_{\C_2}(S^{-\star+n_i(1+\alpha)},\HF)
\]
The finite type assumption finally allows us to identify the product with a wedge $\bigvee_{i \in I}F_{\C_2}(S^{-\star+n_i(1+\alpha)},\HF)$
since the homotopy groups of the summands are non-trivial only for finitely many of them in any given bidegree. 


(2) The forgetful functor is monoidal, therefore the splitting of a homologically pure space yields in particular a splitting
$X^u \wedge \HH \simeq  \bigvee_{i \in I}\Sigma^{2n_i}\HH$. In particular the ordinary homology, and hence the cohomology are concentrated in
even degrees $2n_i$.

(3) Follows from (1) since 	$\HF^\star (X) \cong \pi_{-\star} F_{\C_2}(X,\HF)$.

(4) Restriction is given by forgetting the action as we have seen in  Lemma~\ref{lemma:restriction} and the splitting used in (2) yields
a commutative diagram
	\[
	\xymatrix{
		\HF^{\star}(X) \ar[r]^-\sim  \ar[d]^\rho & \bigoplus_{i \in I} \HF^{\star}(S^{n_i(1+\alpha)}) \ar[d]^\rho \\
		\HH^{|\star|}(X^u) \ar[r]^-\sim  & \bigoplus_{i \in I} \HH^{|\star|}(S^{2n_i}).
	}
	\]
Let us concentrate on degrees of the form $n_i(1+\alpha)$. The top right part is a direct sum
of copies of $\HF^{n_i(1+\alpha)}(S^{n_i(1+\alpha)}) \cong \HF^0(S^0) \cong \bF$ and the bottom right part is 
$\HH^{2n_i}(S^{2n_i}) \cong \HH^0(S^0) \cong \bF$. The vertical arrow is the restriction and is an isomorphism by definition of 
the Mackey functor $\underline{\F}$.
\end{proof}

Let us stress that the key structural property of conjugation spaces is $(4)$. As we will see, the section $\sigma$ and the various maps $\kappa_i$ 
are defined for any $\C_2$-space, with the caveat that their natural source is an equivariant cohomology group $\HF^{n(\alpha+1)}(X)$ and not ordinary cohomology. 
It is only through the isomorphism $(4)$ that these maps descend to $\HH^{2\ast}(X^u)$.

\subsection{The degree halving isomorphism of a homologically pure space}\label{sec:kappa}
The first ingredient in a cohomology frame is the degree halving isomorphism $\kappa_0$ between
the cohomology of the $\C_2$-equivariant space and that of the fixed points.
It is convenient to define first a global map $\kappa_T$, where the letter T stands for ``total'',  that encodes the conjugation equation.
The halving isomorphism $\kappa_0$ appearing in Definition~\ref{def:conjspaceHHP} will be recovered from~$\kappa_T$.

\begin{definition}\label{def:tildekappa}
Let $X$ be a $\C_2$-space. Then the inclusion of fixed points $X^{\C_2} \rightarrow X$ induces 
$\kappa_T\colon \HF^{*(1+\alpha)}(X) \longrightarrow  \HF^{*(1+\alpha)}(X^{\C_2})$.
\end{definition}

If $X$ is homologically pure, we can precompose this map by the isomorphism from Lemma~\ref{lem:relequivnonequiv} (4)
to obtain a map we denote still by $\kappa_T$:
\[
\HH^{2\ast}(X^u) \xrightarrow{\rho^{-1}} \HF^{*(1+\alpha)}(X) \stackrel{\kappa_T}{\longrightarrow}  \HF^{*(1+\alpha)}(X^{\C_2}).
\]
Remember from Proposition~\ref{prop:cohoesptriv} that $\HF^{\star}(X^{\C_2}) \cong \HF^{\star} \otimes \HH^\ast(X^{\C_2})$.
In particular, if we start with a class $x \in \HF^{n(1+\alpha)}(X)$, we obtain from the structure of the coefficient ring $\HF^{\star}$
a decomposition of $\kappa_T(x)$ as $\sum_{i=0}^{n} a^{n-i}u^{i} \otimes \kappa_i^e(x)$. Recall that $a^{n-i}u^{i}$ belongs to $\HF^{n\alpha-i}$.

\begin{definition}\label{def:tildekappai}
Let $X$ be a $\C_2$-space and $x \in \HF^{n(1+\alpha)}(X)$. Then
the \emph{equivariant $\kappa$-classes} are the elements $\kappa_i^e(x) \in \HH^{n+i}(X^{\C_2})$. 
\end{definition}

Before we show that the degree halving isomorphism $\kappa_0$ can be chosen to be $\kappa_0^e$  we need a small lemma:


\begin{lemma}\label{lem:equivptsfixes}
Let $X$ be any $\C_2$-space, then the inclusion of fixed points $X^{\C_2} \rightarrow X$ induces an equivalence of equivariant spectra
\[
\widetilde{E\C_2} \wedge \Sigma^{\infty}(X^{\C_2}) \rightarrow \widetilde{E\C_2} \wedge \Sigma^{\infty}_{\C_2}X.
\]
\end{lemma}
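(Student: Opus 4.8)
The plan is to use the characterization of weak equivalences in $\C_2\Sp$ provided by Proposition~\ref{prop:propgeofixedpoints}~(2): a map $f$ is an equivalence if and only if both $\Phi^{\C_2}(f)$ and $f^u$ are equivalences. Write $j\colon X^{\C_2}\hookrightarrow X$ for the inclusion of fixed points, viewed as a map in $\ZT$, and consider the induced map
\[
g\colon \widetilde{E\C_2}\wedge\Sigma^\infty(X^{\C_2})\longrightarrow \widetilde{E\C_2}\wedge\Sigma^\infty_{\C_2}X.
\]
First I would check the underlying statement. Since $(\widetilde{E\C_2})^u\simeq pt$ by Remark~\ref{rem:ring}, both the source and target of $g$ have contractible underlying spectra: $(\widetilde{E\C_2}\wedge Y)^u\simeq \widetilde{E\C_2}^u\wedge Y^u\simeq pt$ because the forgetful functor is strongly monoidal and smashing a point with anything is a point. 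Hence $g^u$ is trivially a weak equivalence, being a map between contractible spectra.

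The substantive part is to show that $\Phi^{\C_2}(g)$ is a weak equivalence. Here I would use that $\Phi^{\C_2}$ is monoidal in the relevant sense together with the key fact from Remark~\ref{rem:ring} that $\widetilde{E\C_2}\wedge\widetilde{E\C_2}\simeq\widetilde{E\C_2}$. Unwinding the definition $\Phi^{\C_2}(Y)=(\widetilde{E\C_2}\wedge Y)^{\C_2}$, and using the idempotence to simplify $\widetilde{E\C_2}\wedge(\widetilde{E\C_2}\wedge Y)\simeq\widetilde{E\C_2}\wedge Y$, one gets for any $\C_2$-space $Z$ that $\Phi^{\C_2}(\widetilde{E\C_2}\wedge\Sigma^\infty_{\C_2}Z)\simeq\Phi^{\C_2}(\Sigma^\infty_{\C_2}Z)\simeq\Sigma^\infty(Z^{\C_2})$, the last equivalence by Proposition~\ref{prop:propgeofixedpoints}~(3). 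Applying this to $Z=X^{\C_2}$ (with trivial action, so $(X^{\C_2})^{\C_2}=X^{\C_2}$) and to $Z=X$, the map $\Phi^{\C_2}(g)$ becomes, under these identifications, the map $\Sigma^\infty(X^{\C_2})\to\Sigma^\infty(X^{\C_2})$ induced by $j^{\C_2}\colon(X^{\C_2})^{\C_2}\to X^{\C_2}$; but $j^{\C_2}$ is literally the identity of $X^{\C_2}$, so $\Phi^{\C_2}(g)$ is an equivalence.

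The main obstacle I anticipate is making the naturality of the identifications precise: one must ensure that the equivalence $\Phi^{\C_2}(\widetilde{E\C_2}\wedge\Sigma^\infty_{\C_2}Z)\simeq\Sigma^\infty(Z^{\C_2})$ is natural in the $\C_2$-space $Z$, so that it genuinely carries $\Phi^{\C_2}(g)$ to $\Sigma^\infty(j^{\C_2})$ rather than merely to \emph{some} self-equivalence of $\Sigma^\infty(X^{\C_2})$. This is where one should be careful to track the projection $\widetilde{E\C_2}\wedge\Sigma^\infty_{\C_2}Z\to\widetilde{E\C_2}\wedge\widetilde{E\C_2}\wedge\Sigma^\infty_{\C_2}Z$ built from the unit $S^0\to\widetilde{E\C_2}$, which is natural in $Z$, and to invoke that $\Phi^{\C_2}$ of it realizes the natural identification of Proposition~\ref{prop:propgeofixedpoints}~(3). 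Once that naturality is in hand, the conclusion is immediate: $g$ is a weak equivalence because it induces equivalences on both $(-)^u$ and $\Phi^{\C_2}$.
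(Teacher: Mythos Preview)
Your proof is correct and follows essentially the same approach as the paper: check that the map is an equivalence on underlying spectra (both sides are contractible since $(\widetilde{E\C_2})^u\simeq pt$) and on (geometric) fixed points, where both sides identify with $\Sigma^\infty(X^{\C_2})$ via Proposition~\ref{prop:propgeofixedpoints}(3). The only cosmetic difference is that the paper observes directly that taking categorical fixed points of $\widetilde{E\C_2}\wedge Y$ \emph{is} $\Phi^{\C_2}(Y)$ by definition, whereas you apply $\Phi^{\C_2}$ to the already $\widetilde{E\C_2}$-smashed spectra and invoke the idempotence $\widetilde{E\C_2}\wedge\widetilde{E\C_2}\simeq\widetilde{E\C_2}$ to reduce to the same point; your extra care about naturality is a welcome addition that the paper leaves implicit.
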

\begin{proof}
Observe firstly that the underlying non-equivariant spectra on both sides are contractible by Remark~\ref{rem:ring}. 
Secondly, taking fixed points on both sides coincides by definition with taking geometric fixed points of $\Sigma^{\infty}(X^{\C_2})$ 
and $\Sigma^{\infty}_{\C_2}X$ respectively. We conclude
by Proposition~\ref{prop:propgeofixedpoints} (4) that they agree with $\Sigma^{\infty} (X^{\C_2})$.
\end{proof}

For any $\C_2$-space $X$ denote by $\kappa$ the following composite map, where in step 3 and 5 we use the tensor-hom adjunction for 
modules over the ring spectrum $\widetilde{E\C_2}$, see Remark~\ref{rem:ring} and the first map is induced by $S^0 \rightarrow \widetilde{E\C_2}$:
\begin{align*}
\HF^{n(1+\alpha)}(X)  & =  [S^{-n(1+\alpha)}\wedge X,\HF]^{\C_2}\\
 & \rightarrow  [S^{-n(1+\alpha)}\wedge X,\widetilde{E\C_2}\wedge \HF]^{\C_2}\\
& \cong  [\widetilde{E\C_2} \wedge S^{-n(1+\alpha)}\wedge X,\widetilde{E\C_2}\wedge \HF]^{\C_2} \\
& \cong  [\widetilde{E\C_2} \wedge (S^{-n(1+\alpha)}\wedge X)^{\C_2},\widetilde{E\C_2}\wedge \HF]^{\C_2} \\
\shortintertext{by Lemma~\ref{lem:equivptsfixes}}
& \cong  [ \iota(S^{-n(1+\alpha)}\wedge X)^{\C_2},\widetilde{E\C_2}\wedge \HF]^{\C_2} \\
& \cong  [ (S^{-n(1+\alpha)}\wedge X)^{\C_2},(\widetilde{E\C_2}\wedge \HF)^{\C_2}]\\
\shortintertext{by the $\iota \dashv (-)^{\C_2}$ adjunction}
& \cong  [ S^{-n}\wedge X^{\C_2},(\widetilde{E\C_2}\wedge \HF)^{\C_2}] \\
& \cong  [ S^{-n}\wedge X^{\C_2},\bigvee_{k \in \mathbb{N}} S^k \wedge  \HH]\\  
\shortintertext{by Proposition~\ref{prop:geofixehf}, and then, as $\bigvee_{k \in \mathbb{N}} S^k  \hookrightarrow \prod_{k \in \mathbb{N}} S^k$ is an equivalence}
& \cong  [ S^{-n}\wedge X^{\C_2},\prod_{k \in \mathbb{N}} S^k \wedge  \HH]\\ 
& \cong   \prod_{k \in \mathbb{N}}[ S^{-n}\wedge X^{\C_2}, S^k \wedge  \HH] \twoheadrightarrow  \HH^n(X^{\C_2}) 
\end{align*}
where we finally project on the factor $k=0$.
To sum up, the map $\kappa$ consists in representing a cohomology class whose degree is a multiple of the regular representation
by a map $S^{-n(1+\alpha)}\wedge X \rightarrow \HF$ and taking then its geometric fixed points. Since $X$ is a space, 
$\Phi^{\C_2}(S^{-n(1+\alpha)}\wedge X) \simeq S^{-n}\wedge X^{\C_2}$ and the splitting of $\Phi^{\C_2} \HF$ into a wedge of
suspended copies of $\HH$ allows us to project onto one factor, the zeroth one in this case. More generally:

\begin{lemma}\label{lem:kappanadPhi}
Let $pr_i \colon \bigvee_{b \in \mathbb{N}} S^b \wedge H \rightarrow S^i \wedge H$ denote the projection on the $i$-th wedge summand.
For any $x \in \HF^{n(1+\alpha)}(X)$, the cohomology class $\kappa_i^e(x) \in \HH^{n+i}(X^{\C_2})$ coincides with $pr_i \circ \Phi^{\C_2}(x)$.
In particular the maps $\kappa_0^e$ and $\kappa$ above coincide.
\end{lemma}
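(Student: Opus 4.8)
The plan is to exhibit both $\kappa_i^e(x)$ and $pr_i\circ\Phi^{\C_2}(x)$ as two ways of reading off the $i$-th wedge-summand component of one and the same non-equivariant map, namely the geometric fixed points $\Phi^{\C_2}(f)\colon S^{-n}\wedge X^{\C_2}\to\Phi^{\C_2}(\HF)\simeq\bigvee_{k\in\mathbb{N}}S^{k}\wedge\HH$ of a representative $f\colon S^{-n(1+\alpha)}\wedge X\to\HF$ of $x$. Here I use that only the trivial summand of $n(1+\alpha)$ survives passage to fixed points, so $\Phi^{\C_2}(S^{-n(1+\alpha)}\wedge X)\simeq S^{-n}\wedge X^{\C_2}$ by Proposition~\ref{prop:propgeofixedpoints}(3)--(4), and that $\Phi^{\C_2}(\HF)=\HH[b]=\bigvee_k S^k\wedge\HH$ by Proposition~\ref{prop:geofixehf}.

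First I would reduce everything to the fixed-point space. By Definition~\ref{def:tildekappa} the class $\kappa_T(x)$ is represented by the restriction $f\circ(\mathrm{id}\wedge j)$ along the inclusion $j\colon X^{\C_2}\hookrightarrow X$, so by functoriality $\Phi^{\C_2}(\kappa_T(x))=\Phi^{\C_2}(f)\circ\Phi^{\C_2}(\mathrm{id}\wedge j)$. The point is that $\Phi^{\C_2}(j)$ is the identity of $\Sigma^\infty X^{\C_2}$: under the natural identification of Proposition~\ref{prop:propgeofixedpoints}(3) it is $\Sigma^\infty(j^{\C_2})$, and $j^{\C_2}\colon(X^{\C_2})^{\C_2}=X^{\C_2}\to X^{\C_2}$ is the identity; this is, in essence, the content of Lemma~\ref{lem:equivptsfixes}. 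Hence $\Phi^{\C_2}(x)$ and $\Phi^{\C_2}(\kappa_T(x))$ agree as maps $S^{-n}\wedge X^{\C_2}\to\Phi^{\C_2}(\HF)$, and it suffices to prove: for any trivial $\C_2$-space $Z$ and any $y=\sum_i a^{n-i}u^i\otimes y_i\in\HF^{n(1+\alpha)}(Z)$ (decomposed via Proposition~\ref{prop:cohoesptriv}, with $y_i\in\HH^{n+i}(Z)$) one has $pr_i\circ\Phi^{\C_2}(y)=y_i$. Applying this with $Z=X^{\C_2}$ and $y=\kappa_T(x)=\sum_i a^{n-i}u^i\otimes\kappa_i^e(x)$ (Definition~\ref{def:tildekappai}) then gives $pr_i\circ\Phi^{\C_2}(x)=\kappa_i^e(x)$, and the case $i=0$ identifies $\kappa_0^e$ with $\kappa$, since $\kappa$ is by its very construction $pr_0\circ\Phi^{\C_2}$.

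For the statement about $Z$ I would use that $\Phi^{\C_2}$ is monoidal and $\HF$ a commutative ring spectrum (Proposition~\ref{prop:propgeofixedpoints}(4), Proposition~\ref{lem:fix}(2)), so it carries products of cohomology classes to products and $\Phi^{\C_2}(y)=\sum_i\Phi^{\C_2}(a^{n-i}u^i)\cdot\Phi^{\C_2}(y_i)$ inside $\HH[b]$-cohomology of $Z$. Since the action on $Z$ is trivial, each $y_i$ factors through the fixed-point inclusion $\iota\HH\to\HF$ (as in the proof of Proposition~\ref{prop:cohoesptriv}), and by the computation in the proof of Proposition~\ref{prop:geofixehf} this inclusion becomes after $\Phi^{\C_2}$ the inclusion of the degree-zero summand $S^0\wedge\HH\hookrightarrow\HH[b]$; hence $\Phi^{\C_2}(y_i)$ is concentrated in degree zero, where it equals $y_i$. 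On the other hand, by Remark~\ref{rem:geofixehf}, $\Phi^{\C_2}(a^{n-i}u^i)\colon S^i\to\HH[b]$ is the class $b^i$, the unit of the degree-$i$ summand, and multiplication by $b^i$ sends the degree-zero summand isomorphically onto the degree-$i$ one (and $S^k\wedge\HH$ onto $S^{k+i}\wedge\HH$). Therefore $\Phi^{\C_2}(a^{n-i}u^i)\cdot\Phi^{\C_2}(y_i)$ is concentrated in degree $i$, where it is $y_i$; summing over $i$ gives $pr_i\circ\Phi^{\C_2}(y)=y_i$, as wanted.

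I expect the main obstacle to be the careful bookkeeping of the $RO(\C_2)$-degrees under $\Phi^{\C_2}$: one must match the algebraic index $i$ in the splitting $\HF^\star(Z)\cong\HF^\star\otimes\HH^\ast(Z)$ with the topological index of the wedge decomposition $\Phi^{\C_2}(\HF)\simeq\bigvee_k S^k\wedge\HH$, tracking the exponents of $a$ and $u$ correctly through Remark~\ref{rem:geofixehf} (where $a^i u^k=h_{(k+i)\alpha-k}$, viewed as $S^{k-(k+i)\alpha}\to\HF$, geometrically fixes to the map picking out degree $k$). The other delicate point is making the identification $\Phi^{\C_2}(j)=\mathrm{id}$ fully rigorous, i.e.\ unwinding the naturality of Proposition~\ref{prop:propgeofixedpoints}(3); everything else is formal manipulation of adjunctions and the monoidality of $\Phi^{\C_2}$.
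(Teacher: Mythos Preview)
Your proposal is correct and follows essentially the same approach as the paper: both arguments represent $\kappa_T(x)=\sum_i a^{n-i}u^i\otimes\kappa_i^e(x)$, apply the monoidal functor $\Phi^{\C_2}$ term by term, use that $\Phi^{\C_2}(a^{n-i}u^i)=b^i$ picks out the $i$-th wedge summand (Remark~\ref{rem:geofixehf}) while classes coming from $\HH^*(X^{\C_2})$ land in the degree-zero summand, and read off the projections. Your explicit reduction $\Phi^{\C_2}(x)=\Phi^{\C_2}(\kappa_T(x))$ via $\Phi^{\C_2}(j)=\mathrm{id}$ is exactly what the paper packages as the appeal to Lemma~\ref{lem:equivptsfixes}.
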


\begin{proof}
If we represent $x$ by a map $S^{-n(1+ \alpha)} \wedge X \rightarrow \HF$, then $\kappa_T(x)$ is obtained by
precomposing with the fixed point inclusion $X^{\C_2} \hookrightarrow X$. We have already seen that this map
decomposes as a sum of products of classes $\kappa_i^e(x) \in \HH^{n+i}(X^{\C_2})$ with $a^{n-i}u^{i}$. In other
words $\kappa_T(x)$ can be written as a sum of classes of maps
\[
\xymatrix{
S^{-n(1+\alpha)} \wedge X^{\C_2} \simeq S^{i-n\alpha} \wedge S^{-n-i} \wedge X^{\C_2} \ar[rrr]^-{a^{n-i}u^{i} \wedge \kappa_i^e(x)} 
& & & \HF \wedge \HH \ar[r]^-{\mu} &  \HF
}
\]
We apply now geometric fixed points. Since $\Phi^{\C_2}$ is additive and monoidal, and from Lemma~\ref{lem:equivptsfixes}, we get a sum of maps
\[
\xymatrix{
S^{-n} \wedge X^{\C_2} \simeq S^{i} \wedge S^{-n-i} \wedge X^{\C_2} \ar[rrr]^-{b^i \wedge \kappa_i^e(x)} 
& & & \HH[b] \wedge \HH \ar[r]^-{\mu} &  \HH[b]
}
\]
The inclusion $\HH \rightarrow \HF$ induces on geometric fixed points the bottom summand inclusion $\HH \rightarrow \HH[b]$ by
$\HH$-linearity, see Proposition~\ref{prop:geofixehf}.
Thus projection on the $j$-th summand of $\mu \circ (b^i \wedge \kappa_i^e(x))$ is trivial for $j \neq i$ and $\kappa_i^e(x)$ when $j=i$.
\end{proof}

We finally come back to homologically pure spaces. Just like with the map $\kappa_T$, we still denote by 
$\kappa_0^e \colon \HH^{2\ast}(X) \rightarrow \HH^\ast(X^{\C_2})$ the precomposition of $\kappa_0^e$ by the  isomorphism 
$\HH^{2\ast}(X) \stackrel{\cong}{\rightarrow} \HF^{\ast(1+\alpha)}(X)$ when $X$ is pure.

\begin{proposition}\label{prop:kappaisohompur}
Let $X$ be a homologically pure space  of finite type. Then the morphism $\kappa_0^e\colon  \HH^{2\ast}(X) \rightarrow \HH^{\ast}(X^{\C_2})$ is 
an isomorphism.
\end{proposition}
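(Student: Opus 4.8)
The plan is to reduce the claim to the splitting of $\Phi^{\C_2}(\HF)$ together with the homological purity of $X$, via Lemma~\ref{lem:kappanadPhi}. Recall from Lemma~\ref{lem:kappanadPhi} that under the identification $\HH^{2\ast}(X) \cong \HF^{\ast(1+\alpha)}(X)$ of Lemma~\ref{lem:relequivnonequiv}~(4), the map $\kappa_0^e$ is the composite of $x \mapsto \Phi^{\C_2}(x)$ with the projection $pr_0 \colon \Phi^{\C_2}(\HF) = \bigvee_{k} S^k \wedge \HH \to \HH$ onto the zeroth wedge summand. So the real point is to understand $\Phi^{\C_2}$ on cohomology classes of degree a multiple of $1+\alpha$.

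First I would use that $X$ is homologically pure: $X \wedge \HF \simeq \bigvee_{i \in I} \Sigma^{n_i(1+\alpha)}\HF$. Apply the geometric fixed points functor, which by Proposition~\ref{prop:propgeofixedpoints}~(3) sends $\Sigma^\infty_{\C_2}X$ to $\Sigma^\infty(X^{\C_2})$ and is monoidal, so that (with Lemma~\ref{lem:equivptsfixes} to match $\widetilde{E\C_2}\wedge X$ with $\widetilde{E\C_2}\wedge X^{\C_2}$) one gets
\[
  X^{\C_2} \wedge \HH[b] \;\simeq\; \Phi^{\C_2}(X \wedge \HF) \;\simeq\; \bigvee_{i \in I} \Sigma^{n_i} \bigl(\HH[b]\bigr) \;\simeq\; \bigvee_{i \in I} \Sigma^{n_i}\!\!\bigvee_{k \in \mathbb{N}} S^k \wedge \HH .
\]
Taking homotopy groups (equivalently, $\HH$-homology of $X^{\C_2}$) and using Lemma~\ref{lem:relequivnonequiv}~(2) that $\HH^\ast(X^u)$, hence $\HH_\ast(X^u)$, is concentrated in the degrees $2n_i$, we see that $\HH^\ast(X^{\C_2})$ is a free $\F$-module with one generator in each degree $n_i$ (summing the $k=0$ contributions) and that the higher $k$-summands account exactly for the $b^k$-translates. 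Concretely: $\dim_\F \HH^m(X^{\C_2}) = \#\{ i \in I \mid n_i = m\} = \dim_\F \HH^{2m}(X^u)$, which is finite by finite type. Thus source and target of $\kappa_0^e$ have the same (finite) dimension in each degree.

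It then remains to show $\kappa_0^e$ is injective (equivalently surjective) degreewise. For this I would argue that $\kappa_0^e$ is compatible with the splittings: choose a homogeneous $\HF^\star$-basis $\{x_i\}$ of $\HF^\star(X)$ with $|x_i| = n_i(1+\alpha)$ as in Lemma~\ref{lem:relequivnonequiv}~(3); then $\Phi^{\C_2}(x_i) \in \HH^{\ast}(X^{\C_2})[b]$, and one checks using the module structure over $\HF^\star$ and Remark~\ref{rem:geofixehf} (describing $\Phi^{\C_2}(a^{i}u^k)$ as the map picking out the degree-$k$ component) that the leading term $pr_0 \circ \Phi^{\C_2}(x_i) = \kappa_0^e(x_i)$ is exactly the image of a basis element of $\HH^{2n_i}(X^u)$ under the restriction isomorphism of Lemma~\ref{lem:relequivnonequiv}~(4), carried through $\Phi^{\C_2}$; since the $\Phi^{\C_2}(x_i)$ form a basis of $\HH^{\ast}(X^{\C_2})\otimes \F[b]$ over $\F[b]$ and the degree-$k$ parts of these basis elements span the degree-$k$ slices, the $\kappa_0^e(x_i)$ span $\HH^\ast(X^{\C_2})$. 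Combined with the dimension count this forces $\kappa_0^e$ to be an isomorphism.

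The main obstacle I anticipate is the bookkeeping in the last step: verifying that the $b^0$-components $\kappa_0^e(x_i) = pr_0\circ\Phi^{\C_2}(x_i)$ really do form a basis of $\HH^\ast(X^{\C_2})$, rather than merely spanning a subspace of the right total dimension but possibly with collisions in a fixed degree. The clean way to avoid collisions is to exploit that the $\F[b]$-module map induced by $\Phi^{\C_2}$ from $\HF^\star(X)\otimes_{\HF^\star}\F[a^{\pm1},u]$ onto $\HH^\ast(X^{\C_2})[b]$ is an isomorphism of free modules carrying the basis $\{x_i\}$ to $\{\Phi^{\C_2}(x_i)\}$, so that reading off the coefficient of $b^0$ degreewise is literally an isomorphism onto $\HH^\ast(X^{\C_2})$; identifying this coefficient-of-$b^0$ map with $\kappa_0^e$ is precisely Lemma~\ref{lem:kappanadPhi}, and identifying its source with $\HH^{2\ast}(X)$ is Lemma~\ref{lem:relequivnonequiv}~(4). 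I would make sure the grading conventions ($a^{n-i}u^i$ in $\HF^{n\alpha - i}$, $b = a^{-1}u$ of homological degree $1$) are tracked carefully so the degree shifts line up, since an off-by-one there would be the easiest mistake to make.
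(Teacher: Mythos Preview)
Your approach is correct in outline but takes a different route from the paper's proof, and the step you flag as the obstacle does need one more ingredient to close.

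The paper argues entirely on the function-spectrum side. It first establishes the analogue of Lemma~\ref{lem:relequivnonequiv}(1) with $\widetilde{E\C_2}\wedge\HF$ in place of $\HF$ (this is where Lemma~\ref{lem:aperprop} is invoked), obtaining
\[
F_{\C_2}(S^{-\star}\wedge X,\widetilde{E\C_2}\wedge\HF)\ \simeq\ \bigvee_{i\in I} F_{\C_2}(S^{-\star}\wedge S^{n_i(1+\alpha)},\widetilde{E\C_2}\wedge\HF).
\]
This makes the entire chain of maps defining $\kappa$ (hence $\kappa_0^e$, via Lemma~\ref{lem:kappanadPhi}) decompose as a direct sum indexed by $I$, so one is reduced to checking that $\kappa_0^e$ is an isomorphism for a single conjugation sphere $S^{n(1+\alpha)}$, which is a one-line inspection. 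No dimension count and no ``mod $b$'' argument are needed.

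Your route instead applies $\Phi^{\C_2}$ to the smash-product splitting to do a dimension count, and then argues that $\kappa_0^e$ carries a basis to a basis. The point you rightly worry about---that the $\Phi^{\C_2}(x_i)$ form an $\F[b]$-basis of $\HH^*(X^{\C_2})[b]$---is not automatic from the mere existence of a purity splitting: you must know that the particular classes $x_i$ \emph{realize} that splitting. Concretely, you need that the $\HF$-linear map $X\wedge\HF\to\bigvee_i\Sigma^{n_i(1+\alpha)}\HF$ assembled from the $x_i$ of Lemma~\ref{lem:relequivnonequiv}(3) is itself an equivalence (it is, because those $x_i$ come from the function-spectrum splitting of Lemma~\ref{lem:relequivnonequiv}(1), which was derived from the purity equivalence via the free--forgetful adjunction; unwinding shows the assembled map is an inverse to the original splitting). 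Once you have that, applying $\Phi^{\C_2}$ gives an equivalence of $\HH[b]$-modules, and base-changing along the ring map $pr_0\colon\HH[b]\to\HH$ yields an equivalence $X^{\C_2}\wedge\HH\simeq\bigvee_i\Sigma^{n_i}\HH$ assembled from the $\kappa_0^e(x_i)$; hence these form a basis and you are done. So your argument can be completed, but the missing sentence is exactly ``the $x_i$ assemble to the purity equivalence''; without it the basis claim is an assertion, not a proof. The paper's direct decomposition of $\kappa_0^e$ via function spectra avoids this extra verification.
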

\begin{proof}
Because the ``$a$-periodization''  and the ``free $\HF$-module'' functors commute, see Lemma~\ref{lem:aperprop},
the same computation as in Lemma~\ref{lem:relequivnonequiv}(1) shows that we have an equivalence of function spectra:
\[
F_{\C_2}(S^{-\star}\wedge X ,\widetilde{E\C_2} \wedge \HF) \simeq 	\bigvee_{i \in I} F_{\C_2}(S^{-\star}\wedge S^{n_i(1+\alpha)} ,\widetilde{E\C_2} \wedge \HF) 
\]
As a consequence, fixing an integer $m \in \mathbb{N}$, the following diagram defining $\kappa^e_0$ for $X$ and for a wedge of spheres commutes:
\[
\xymatrix{
		\HF^{2m}(X) \ar[r]^-\cong &  [S^{-m(1+\alpha)}\wedge X,\HF]^{\C_2} \ar[d]^{\cong} \ar[r]  &  [S^{-m(1+\alpha)}\wedge X,\widetilde{E\C_2} \wedge \HF]^{\C_2} \ar[d]^{\cong} \ar[r] &  \cdots \\
		 &  \bigoplus_{i \in I}[S^{(n_i-m)(1+\alpha)},\HF]^{\C_2} \ar[r] & \bigoplus_{i \in I}[S^{(n_i-m)(1+\alpha)},\widetilde{E\C_2} \wedge \HF]^{\C_2}  \ar[r] & \cdots \\
	\cdots \ar[r]	 &  [ S^{-m}\wedge X^{\C_2},  \HH[b]] \ar@{->>}[r] \ar[d]^{\cong} & 
		 [ S^{-m}\wedge X^{\C_2}, \HH] \ar[d]^\cong \\
	\cdots	 \ar[r] & \bigoplus_{i \in I}[S^{(n_i-m)},\HH[b]] \ar@{->>}[r] & \bigoplus_{i \in I}[ S^{(n_i-m)}, \HH]
	}
	\]
Because the definition of the map $\kappa^e_0$ only involves adjunctions and operations on the second variable in the homotopy classes of maps, 
it is immediate that it is an additive map; in other words the bottom line above respects the direct sum decomposition. Hence it is enough to show the 
proposition for a single conjugation sphere $S^{n(1+\alpha)}$, in which case the only non-trivial statement is when $m=n$, i.e. $S^{(n-m)}= S^0$. 

Let us examine the definition of $\kappa_0^e$ in this case. By the computation of the coefficient ring $\HF^{\star}$ we have a generator 
in degree zero that is divisible by $a$ (see Proposition~\ref{prop:strMAckHF}), 
and the first arrow is an isomorphism. So is the second one because $\HH$ is an Eilenberg-Mac Lane spectrum. 
\end{proof}

\subsection{The section of a homologically pure space}
\label{subsec:section}
This map is comparatively easier to define. We start with a short computation.

\begin{lemma}\label{lemma:EC}
The cohomology of the universal $\C_2$-space is $\HF^{\star}(E\C_{2+}) \cong \F[a, u^{\pm 1}]$. 
As a consequence, for any $\C_{2}$-space $X$, $\HF^{\star}(E\C_{2+} \wedge X) \cong H^{|\star|}(E\C_{2+} \wedge_{\C_2} X)$.
\end{lemma}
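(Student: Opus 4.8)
\textbf{Proof plan for Lemma~\ref{lemma:EC}.} The plan is to compute $\HF^\star(E\C_{2+})$ directly from the model of $E\C_2$ as a colimit of unit spheres, then deduce the consequence about $\HF^\star(E\C_{2+}\wedge X)$ by combining this with the free-forgetful adjunction. For the first part, I would use the cofiber sequences from~\eqref{eq:cof}, namely $S(n\alpha)_+ \to S^0 \to S^{n\alpha}$, and recall that $E\C_{2+} = \colim_n S(n\alpha)_+$. Smashing with $\HF$ and passing to the colimit, the isotropy separation sequence identifies $E\C_{2+}\wedge \HF$ as the fiber of $\HF \to \widetilde{E\C_2}\wedge\HF$, so on $RO(\C_2)$-graded homotopy we get a long exact sequence relating $\HF^\star$, the cohomology $\HF^\star(E\C_{2+})$, and $\pi^{\C_2}_\star(\widetilde{E\C_2}\wedge\HF)$. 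By Remark~\ref{rem:geofixehf} the latter is $\F[a^{\pm1},u]$, and by Proposition~\ref{prop:algstrHF} the ring $\HF^\star$ is the square-zero extension of $\F[a,u]$ by $u^{-2}\F[a^{-1},u^{-1}]$. Chasing the long exact sequence (the map $\HF^\star \to \pi^{\C_2}_\star(\widetilde{E\C_2}\wedge\HF)$ being $a$-localization) kills the negative-cone part $M=u^{-2}\F[a^{-1},u^{-1}]$ and leaves exactly $\F[a,u^{\pm1}]$; the point is that inverting $a$ on $\F[a,u]$ yields $\F[a^{\pm1},u]$ whose difference with $\F[a,u]$ is the cokernel contributing the $u$-negative part. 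I should double-check that the connecting map accounts precisely for the discrepancy so that $\HF^\star(E\C_{2+}) \cong \F[a,u^{\pm1}]$, i.e. $a$ stays a non-zero-divisor but $u$ becomes invertible.

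For the consequence, note that $E\C_{2+}\wedge X$ has \emph{free} underlying $\C_2$-action, so by the free-forgetful adjunction~\eqref{adj:free} — or more precisely by the description of $E\C_{2+}\wedge X$ as built from free cells $\C_{2+}\wedge(-)$ — the equivariant cohomology $\HF^\star(E\C_{2+}\wedge X)$ depends only on the underlying data and on the Borel construction. Concretely, I would use that $(E\C_{2+}\wedge X)\wedge_{\C_2}(-)$ computes Borel cohomology: we have $\HF^\star(E\C_{2+}\wedge X) = [S^{-\star}\wedge E\C_{2+}\wedge X,\HF]^{\C_2}$, and since the action on $E\C_{2+}\wedge X$ is free, this equals $[S^{-|\star|}\wedge (E\C_{2+}\wedge X)/\C_2, \HH]$ up to the $u$-periodicity bookkeeping — matching the first part where $\HF^\star(E\C_{2+}) \cong \F[a,u^{\pm1}] \cong \HH^*(B\C_2)[u^{\pm1}]$. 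Taking $X=S^0$ recovers Lemma~\ref{lemma:EC}'s first statement and for general $X$ gives $\HF^\star(E\C_{2+}\wedge X) \cong \HH^{|\star|}(E\C_{2+}\wedge_{\C_2}X)$, the $u^{\pm1}$ only shifting which integral degree one lands in.

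Alternatively, and perhaps more cleanly, I would argue via $a$-periodicity of the \emph{complementary} piece: the map $E\C_{2+}\wedge\HF \to \HF$ becomes, after inverting $a$, the zero map (since $a$ acts nilpotently — in fact as zero in the relevant range — on $E\C_{2+}$, as $a$ restricts to the Euler class of $\alpha$ which vanishes on a free space), so $\HF^\star(E\C_{2+})$ is the $a$-torsion-free quotient... but this needs care, so I would prefer to present the colimit computation. For the general-$X$ statement, the cleanest route is: $E\C_{2+}\wedge X \wedge \HF \simeq E\C_{2+}\wedge(X\wedge\HF)$, and $E\C_{2+}\wedge(-)$ on $\HF$-modules, followed by taking $\C_2$-fixed points, is the Borel construction; combined with Proposition~\ref{lem:fix}(1) that $(\HF)^{\C_2}\simeq\HH$, one gets the Künneth-type identification with $\HH^{|\star|}((E\C_{2+}\wedge X)_{\C_2})$, the $u^{\pm 1}$ absorbed because $\HF^\star(E\C_{2+})$ itself already carries the $u$-periodicity reflecting the single degree of freedom $|\star|$.

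\textbf{Main obstacle.} The delicate point is bookkeeping the $RO(\C_2)$-grading against the single integer $|\star|$: one must verify that inverting $u$ exactly encodes the collapse of the $RO(\C_2)$-grading to a $\Z$-grading on a free space, and that the connecting homomorphism in the isotropy separation long exact sequence kills precisely the negative cone $M$ and nothing more. I expect the rest (the adjunction manipulations, identifying Borel cohomology) to be routine given Proposition~\ref{prop:algstrHF}, Remark~\ref{rem:geofixehf}, and Lemma~\ref{lemma:restriction}.
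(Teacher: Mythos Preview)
Your plan for the first computation takes a genuinely different route from the paper. The paper does not use the isotropy separation sequence here; instead it computes $\HF^\star(S(n\alpha)_+) \cong \F[a,u^{\pm1}]/a^n$ by induction on $n$ via the cofiber sequences
\[
S(n\alpha)_+ \longrightarrow S((n+1)\alpha)_+ \longrightarrow \C_{2+}\wedge S^n,
\]
using only that $\HF^\star(\C_{2+}\wedge S^n) \cong \Sigma^n\F[u^{\pm1}]$, and then passes to the limit with the Milnor exact sequence. This is more elementary than your approach, which needs the full structure of $\HF^\star$ from Proposition~\ref{prop:algstrHF} and of $\pi^{\C_2}_\star(\widetilde{E\C_2}\wedge\HF)$ from Remark~\ref{rem:geofixehf} as inputs, together with a long-exact-sequence chase whose connecting map you yourself flag as needing verification.

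There is also a slip in your argument that would need repair: smashing the isotropy cofiber sequence with $\HF$ and taking $\pi^{\C_2}_\star$ computes \emph{homology} $\HF_\star(E\C_{2+}) = \pi^{\C_2}_\star(E\C_{2+}\wedge\HF)$, not the cohomology $\HF^\star(E\C_{2+})$ that the lemma asserts. To obtain a long exact sequence in cohomology you must apply $F_{\C_2}(-,\HF)$ instead, and then the third term is $\pi^{\C_2}_{-\star}F_{\C_2}(\widetilde{E\C_2},\HF)$, a \emph{limit} along multiplication by $a$, not the colimit computed in Remark~\ref{rem:geofixehf}; these are not the same object. The final answer still comes out to $\F[a,u^{\pm1}]$, but your argument as written does not establish it for cohomology.

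For the consequence about general $X$, your strategy and the paper's essentially coincide: use the first part to see that $u$ acts invertibly on $\HF^\star(E\C_{2+}\wedge X)$ (as a module over $\HF^\star(E\C_{2+})$), reduce to integer grading, and then identify with Borel cohomology via freeness of $E\C_{2+}\wedge X$. The paper makes this last step precise by invoking that $\HF$ is a \emph{split} spectrum, i.e.\ the map $\iota\HH \to \HF$ is a non-equivariant equivalence, so that $F_{\C_2}(E\C_{2+},\iota)$ is an equivalence and the passage from $[-,\iota\HH]^{\C_2}$ to $[-,\HF]^{\C_2}$ is an isomorphism on free spectra; this is the content hiding behind your phrase ``up to the $u$-periodicity bookkeeping''.
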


\begin{proof} To compute the cohomology of $E\C_{2+} \simeq \colim_n S(n\alpha)_+$, we first compute the cohomology of the spheres $S(n\alpha_+)$. An easy induction argument on $n$ using the cofibration sequence:
		\[
		S(n\alpha) \rightarrow S((n+1)\alpha) \rightarrow \C_{2+} \wedge S^{n} 
		\]
	shows that $\HF^{\star}(S(n\alpha_+)) \cong \F[a,u^{\pm 1}]/a^{n}$ (see \cite[Section 3]{Clover} for details). The Milnor sequence gives then the result.
	
	For the second part, observe that the homotopical uniqueness of the space $E\C_{2+}$ implies that, by smashing and using the ring structure of $\HF$,  $\HF^{\star}(E\C_{2+} \wedge X)$ is and $\HF(E\C_{2+})$-module. In particular multiplication by $u^{\pm1}$ is an isomorphism in $\HF^{\star}(E\C_{2+} \wedge X)$. Hence multiplication by $u^{-m}$ induces an isomorphism:
	\[
	\HF^{n +m\alpha}(E\C_{2+} \wedge X) \cong \HF^{n +m}(E\C_{2+} \wedge X)
	\]
	and it is enough to show the result in integral grading. This is now a particular case of \cite[Chap II. Thm 8.1]{LMS86}. Briefly, one the one hand, the restriction map of the Mackey functor $\underline{\bF}$ is an isomorphism. In particular, the morphism $\iota \colon \iota \HH \rightarrow \HF$, adjoint
to the identity map on $\HH$ (remember that $(\HF)^{\C_2} \simeq \HH$ by Proposition~\ref{lem:fix}(1)), is a non-equivariant weak equivalence. 
By \cite[Lemma 0.4]{MR1230773} this property is equivalent to $\HF$ being a split spectrum, which is well known, and used for 
instance in \cite[proof of Proposition 6.2]{HK}.

On the other hand, (the suspension spectrum of) $E\C_{2+}\wedge X$ is the paradigm of a free $\C_2$-spectrum. Because of this last fact:
\[
\HH^\ast(E\C_{2+} \wedge_{\C_2} X) = [S^{-*} \wedge(E\C_{2+} \wedge X)_{\C_2},\HH] \cong [S^{-*} \wedge E\C_{2+} \wedge X,\iota\HH]^{\C_2}.
\]
We can compose further with the map $\iota$
\[
[S^{-*} \wedge E\C_{2+} \wedge X, \iota\HH]^{\C_{2}} \rightarrow [S^{-*} \wedge E\C_{2+} \wedge X,\HF]^{\C_2}.
\]
which is an isomorphism since $\iota$ is an underlying equivalence and therefore induces an equivalence of function spectra 
$F_{\C_2}(E\C_{2+},\iota)$ by \cite[Proposition 1.1]{MR1230773}. This concludes the proof.
\end{proof}

 When $X$ is a finite $\C_2$-space, the last computation is more direct, see for example Hazel's \cite[Lemma~3.1]{Hazel19}.
We are now ready to define the section of the cohomology frame.

\begin{definition}\label{def:tildesigma}
Let $X$ be a $\C_2$-space. The equivariant map $E\C_{2+} \wedge X \rightarrow S^0 \wedge X$ 
induces a \emph{global section} $\sigma_T\colon \HF^{*(1+\alpha)}(X) \longrightarrow  \HF^{*(1+\alpha)}(E\C_{2+} \wedge X)$.
\end{definition}

When $X$ is a homologically pure space, Lemma~\ref{lem:relequivnonequiv}.(4) allows us to identify the source with
ordinary cohomology and Lemma~\ref{lemma:EC} identifies the target with Borel cohomology.

\begin{definition}\label{def:sigma}
Let $X$ be a homologically pure space. The \emph{section} $\sigma$ is the composite:
\[
\HH^{2*}(X^u) \cong \HF^{*(1+\alpha)}(X)  \stackrel{\sigma_T}{\longrightarrow} \HF^{*(1+\alpha)}(E\C_{2+} \wedge X) \cong H^{2*}(E\C_{2+} \wedge_{\C_2} X)
\]
\end{definition}

\begin{lemma} \label{lem:sigmais section}
Let $X$ be a homologically pure space. The morphism $\sigma$ is a section of the restriction
$\rho \colon H^*(E\C_{2+} \wedge_{\C_2} X) \rightarrow H^*(X)$.
\end{lemma}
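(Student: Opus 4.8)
The plan is to unwind the definition of $\sigma$ and show that composing with the restriction $\rho$ gives the identity. Recall that $\sigma$ was defined as the composite
\[
\HH^{2*}(X^u) \xrightarrow{\ \cong\ } \HF^{*(1+\alpha)}(X) \xrightarrow{\ \sigma_T\ } \HF^{*(1+\alpha)}(E\C_{2+}\wedge X) \xrightarrow{\ \cong\ } \HH^{2*}(E\C_{2+}\wedge_{\C_2} X),
\]
where the first isomorphism is the restriction of the Mackey functor structure from Lemma~\ref{lem:relequivnonequiv}(4), the map $\sigma_T$ is induced by the projection $E\C_{2+}\wedge X \to S^0\wedge X$, and the last isomorphism is from Lemma~\ref{lemma:EC}. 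The restriction $\rho\colon \HH^*(E\C_{2+}\wedge_{\C_2} X)\to \HH^*(X^u)$ is, as recalled in the introduction, induced by the inclusion $\C_{2+}\wedge X \hookrightarrow E\C_{2+}\wedge X$; equivalently, under the identifications in Definition~\ref{def:equivcoho}, it is the map $\underline{\HF}^{\star}_{\C_2}(E\C_{2+}\wedge X)\to \underline{\HF}^{\star}_{\C_2}(X)$ induced by that same projection, precomposed with the restriction $\underline{\HF}^{\star}_{pt}\to\underline{\HF}^{\star}_{\C_2}$ on the target.

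The key step is to assemble one large diagram of equivariant cohomology groups in which everything is induced by two maps: the projection $p\colon E\C_{2+}\wedge X \to S^0\wedge X = X$ and the restriction (forgetting the action, i.e. smashing with $\C_{2+}\to S^0$). On the one hand, $\sigma$ followed by $\rho$ is built by going up along the Mackey restriction on $X$, across by $p^*$, and then down along the Mackey restriction on $E\C_{2+}\wedge X$ together with $p^*$ again on underlying cohomology; functoriality of $\underline{\HF}^{\star}$ as a Mackey functor lets us commute the two restriction maps past $p^*$. Concretely, the composite $\rho\circ\sigma$ unwinds to the map on $\HH^{2*}(X^u)$ induced by $p^u\circ (\text{section}) $, i.e. by the underlying map $(E\C_2{}_+\wedge X)^u \to X^u$ composed with the canonical section $X^u\to (E\C_2{}_+\wedge X)^u$ coming from $\C_2{}_+\wedge X\hookrightarrow E\C_2{}_+\wedge X$; since $E\C_2$ is non-equivariantly contractible, $p^u$ is a homotopy equivalence and this composite is the identity on $X^u$, hence the identity on $\HH^{2*}(X^u)$.

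More carefully, I would phrase it as: the isomorphisms of Lemma~\ref{lem:relequivnonequiv}(4) and Lemma~\ref{lemma:EC} are both instances of the Mackey-functor restriction $\rho\colon \underline{\HF}^{\star}_{pt}\to\underline{\HF}^{\star}_{\C_2}$ (the first because $X$ is pure, the second because $E\C_{2+}\wedge X$ is free and $\HF$ is split, so both legs of $\rho$ compute the same Borel-type group). Naturality of $\rho$ with respect to the equivariant map $p\colon E\C_{2+}\wedge X\to X$ gives a commuting square relating $\sigma_T = p^*$ at the $pt$-level with the Borel-level map $p^*$. Chasing the class $x\in\HH^{2m}(X^u)$ around, $\rho\sigma(x)$ equals the image of $x$ under $\underline{\HF}^{\star}_{\C_2}(p)$ followed by nothing — because $\rho$ at the end is exactly the inverse of the Lemma~\ref{lemma:EC} identification composed with forgetting, and $p$ restricted to $\C_{2+}\wedge X$ is the identity. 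So on underlying cohomology the net effect of $\sigma$ then $\rho$ is induced by $\mathrm{id}_{X^u}$.

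The main obstacle is bookkeeping: making sure the three different "restriction" maps (the Mackey restriction giving the source identification, the Mackey restriction giving the Borel identification, and the restriction map $\rho$ of the conjugation frame from the introduction) are correctly identified and that the naturality square for $p$ is set up with the right variances, so that the composite genuinely collapses to the identity rather than to multiplication by some unit or an inclusion of a summand. I expect no real mathematical difficulty beyond this diagram chase, since the geometric input — that $p^u$ is an equivalence and $p$ restricts to the identity on the free part $\C_{2+}\wedge X$ — is elementary; the care needed is purely in tracking the identifications introduced in Lemmas~\ref{lem:relequivnonequiv} and~\ref{lemma:EC}.
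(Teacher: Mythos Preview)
Your approach is correct and essentially the same as the paper's: both arguments hinge on the naturality square of the Mackey restriction $\rho$ with respect to the map $p\colon E\C_{2+}\wedge X\to X$, together with the fact that $p^u$ is an equivalence since $(E\C_2)^u\simeq pt$. The paper's proof is terser---it writes down exactly that square and leaves the compatibility between the Lemma~\ref{lemma:EC} identification and the Mackey restriction implicit---whereas you spell out more of the bookkeeping; but there is no substantive difference in strategy.
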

\begin{proof}
The morphism $\sigma$ is induced by $E\C_{2+} \wedge X \rightarrow X$,  up to natural isomorphisms. 
This map induces a morphism of \emph{Mackey functors} $\HF^{\star}(X) \rightarrow \HF^{\star}(E\C_{2+} \wedge X)$,
and in particular, it is compatible with the restriction morphisms. Consequently, the following square commutes:
\[
\xymatrix{  
\HF^{*(1+\alpha)}(X) \ar[r] \ar[d]_{\rho}^{\cong}  & \HF^{*(1+\alpha)}(E\C_{2+} \wedge X) \ar[d]^{\rho} \\ 
H^{2*}(X^u) \ar[r]^-{\cong}& H^{2*}((E\C_{2+} \wedge X)^u) 
}
\]
where the vertical maps are the restrictions of the corresponding Mackey functors, identified in Lemma~\ref{lemma:restriction}. 
The left hand side one is the isomorphism given by Lemma~\ref{lem:relequivnonequiv}.(4) and used in the definition of $\sigma$,
the bottom map is an isomorphism since $(E\C_2)^u \simeq pt$.
\end{proof}

\subsection{The cohomology frame of a homologically pure space}
We are finally ready to prove the main result of this section. As we already have constructed
a halving isomorphism and a section of the restriction map, it only remains to check the conjugation equation.

\begin{theorem}\label{prop:homolpureisconj}
Let $X$ be a homologically pure space of finite type, then it is a conjugation space of finite type.
\end{theorem}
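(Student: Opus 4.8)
The plan is to verify the conjugation equation for the pair $(\kappa_0^e, \sigma)$ constructed in the previous subsections, since we have already shown in Proposition~\ref{prop:kappaisohompur} that $\kappa_0^e$ is a degree-halving isomorphism and in Lemma~\ref{lem:sigmais section} that $\sigma$ is an additive section of $\rho$. What remains is to compute $r \circ \sigma(x)$ for a class $x \in \HH^{2m}(X^u) \cong \HF^{m(1+\alpha)}(X)$ and identify it with $\kappa_0^e(x) b^m$ plus lower-order terms in $b$. The natural strategy is to work entirely at the equivariant level with the global maps $\kappa_T$ and $\sigma_T$, and compare two restriction maps landing in $\HF^{\star}$ of the Borel construction of the fixed points, namely $(X^{\C_2})_{h\C_2} = B\C_{2+} \wedge X^{\C_2}$.

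First I would set up the relevant commutative diagram of equivariant spaces. The inclusion of fixed points $X^{\C_2} \hookrightarrow X$ and the projection $E\C_{2+} \wedge X \to X$ fit together: restricting $\sigma_T(x)$ to $E\C_{2+} \wedge X^{\C_2}$ equals restricting $\kappa_T(x)$ along $E\C_{2+} \wedge X^{\C_2} \to X^{\C_2}$. Since $\C_2$ acts trivially on $X^{\C_2}$, we have $E\C_{2+} \wedge X^{\C_2} \simeq E\C_{2+} \wedge \iota X^{\C_2}$, and by Lemma~\ref{lemma:EC} its $\HF$-cohomology is $\HF^{\star}(E\C_{2+}) \otimes \HH^\ast(X^{\C_2}) \cong \F[a, u^{\pm 1}] \otimes \HH^\ast(X^{\C_2})$, which after identifying $b = a^{-1}u$ and inverting $u$ recovers precisely the Borel cohomology $\HH^\ast(X^{\C_2})[b]$ with $b$ in degree one. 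Under this identification, the restriction map $r$ of Definition~\ref{def:conjspaceHHP} becomes the natural map $\HF^{\star}(E\C_{2+} \wedge X) \to \HF^{\star}(E\C_{2+} \wedge X^{\C_2})$ followed by these identifications.

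The heart of the argument is then the computation: $\kappa_T(x) \in \HF^{m(1+\alpha)}(X^{\C_2}) \cong \bigoplus_{i=0}^{m} \HF^{m(1+\alpha) - i + i\alpha}\{ \ \} \cdot \kappa_i^e(x)$, where the decomposition $\kappa_T(x) = \sum_{i=0}^m a^{m-i} u^{i} \otimes \kappa_i^e(x)$ comes from Definition~\ref{def:tildekappai} and the structure of $\HF^\star$. Now I would push this into $\HF^\star(E\C_{2+} \wedge X^{\C_2})$, where $u$ becomes invertible: the term $a^{m-i} u^{i} \otimes \kappa_i^e(x)$ becomes $a^{m-i} u^{i} \otimes \kappa_i^e(x) = u^m \cdot (a u^{-1})^{m-i} \otimes \kappa_i^e(x) = u^m b^{-(m-i)} \otimes \kappa_i^e(x)$. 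Hmm — I should be careful about which way $b$ goes; with $b = a^{-1} u$ one gets $a^{m-i}u^i = u^m b^{i-m}$, so after the standard normalization (absorbing the invertible $u^m$, which is how the isomorphism with $\HH^\ast(X^{\C_2})[b]$ is set up so that $u^{|\star|/|1+\alpha|}$ corresponds to $1$) the top term $i = m$ contributes $\kappa_m^e(x)$ with no $b$, and $i = 0$ contributes $\kappa_0^e(x) b^{m}$. So $r\circ\sigma(x) = \sum_{i=0}^m \kappa_i^e(x) b^{m-i} = \kappa_0^e(x) b^m + (\text{terms of } b\text{-degree} < m)$, which is exactly the conjugation equation with $lt_m = \sum_{i=1}^{m} \kappa_i^e(x) b^{m-i}$ and with $\kappa_0$ taken to be $\kappa_0^e$. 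The commutativity ensuring that $\sigma_T(x)$ restricted to the Borel construction of $X^{\C_2}$ equals this expression follows from the fact that $\sigma_T$ and $\kappa_T$ are both induced by maps of equivariant spaces, so their composites with the restriction to $E\C_{2+}\wedge X^{\C_2}$ agree.

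The main obstacle I anticipate is purely bookkeeping: getting the identification of $\HF^\star(E\C_{2+} \wedge X^{\C_2})$ with the polynomial ring $\HH^\ast(X^{\C_2})[b]$ exactly right, including the normalization by powers of $u$ that makes the restriction map $r$ match the one in Definition~\ref{def:conjspaceHHP}, and then tracking the degree of $b$ so that the ``leading term'' is genuinely $\kappa_0^e(x) b^m$ rather than $\kappa_m^e(x) b^m$. One must also check that $\kappa_0^e$ and $\sigma$, being ring maps is not needed here — only additivity and the conjugation equation — so no extra multiplicativity verification is required at this stage. Given the preparatory lemmas, once the identifications are pinned down the verification is essentially formal, so I would expect the proof to be short, with all the subtlety concentrated in correctly invoking Lemma~\ref{lemma:EC}, Proposition~\ref{prop:cohoesptriv}, and the naturality of the Mackey functor restriction.
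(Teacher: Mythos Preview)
Your proposal is correct and follows essentially the same route as the paper: set up the commutative square coming from the inclusion $X^{\C_2}\hookrightarrow X$ and the map $E\C_{2+}\wedge(-)\to(-)$, identify $\HF^{\star}(E\C_{2+}\wedge X^{\C_2})$ via Proposition~\ref{prop:cohoesptriv} and Lemma~\ref{lemma:EC}, push the decomposition $\kappa_T(x)=\sum a^{m-i}u^{i}\otimes\kappa_i^e(x)$ through, and normalize by $u^{-m}$ to land in $\HH^\ast(X^{\C_2})[b]$. The only wobble is the orientation of $b$: in the Borel-cohomology identification the degree-one generator is $b=au^{-1}$ (not $a^{-1}u$, which is the homological convention from Remark~\ref{rem:geofixehf}); with that choice $a^{m-i}u^{i}=u^{m}(au^{-1})^{m-i}$ and the normalization immediately gives $\sum_i \kappa_i^e(x)\,b^{m-i}$, exactly as you conclude.
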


\begin{proof} Observe that trivially a homologically pure space of finite type $X$ has ordinary cohomology $\HH^\ast(X)$ of finite type.
We must show that the maps $\kappa^e_0$ and $\sigma$ defined above satisfy the conjugation equation.
The commutative square~:
	\[
	\xymatrix{
	E\C_{2+} \wedge X^{\C_2}  \ar[r] \ar[d] & X^{\C_2} \ar[d]\\
	E\C_{2+} \wedge X  \ar[r]  & X \\
}
	\]
	induces a commutative square:
	\[
	\xymatrix{
		  \HF^{\ast(1+\alpha)}(X) \ar[r]^-{\sigma} \ar[d]_-{\kappa_T} &  \HF^{\ast(1+\alpha)}(E\C_{2+} \wedge X) \ar[d] \\
	\HF^{\ast(1+\alpha)}(X^{\C_2}) \ar[r]  &  \HF^{\ast(1+\alpha)}(E\C_{2+} \wedge X^{\C_2}). \\
	}
	\]
To understand where the conjugation equation comes from it is enough to understand the bottom line.
 Since $X$ is pure of finite type, the fixed points space $X^{\C_2}$ has finite type, so that
Proposition~\ref{prop:cohoesptriv} applies: The target coincides with the degree $*(1+\alpha)$ part of $\HF^\star(E\C_{2+}) \otimes H^*(X^{\C_2})$.
The decomposition of $\kappa_T(x)$  in terms of $\kappa^e_i(x)$'s is thus preserved by this map. 

But since $\HF^{\star}(E\C_{2+}) \cong \pi_{-\star}(F_{\C_2}(E\C_{2+},\HF))$
the computation that has been used in Proposition~\ref{prop:algstrHF} shows that 
$\kappa_T(x) = \sum_{i=0}^{n} a^{n-i}u^{i} \otimes \kappa_i^e(x)$ is sent
to the corresponding sum in $\HF^{\star}(E\C_{2+}) \otimes H^*(X^{\C_2})$. The identification with the non-equivariant cohomology is obtained
by $u$-periodicity, as mentioned in the introduction of Subsection~\ref{subsec:section}. Hence we push this element by multiplying by $u^{-n}$
so as to have a sum $\sum_{i=0}^{n} (au^{-1})^{n-i} \otimes \kappa_i^e(x)$, where the degree one element $a u^{-1}$ identifies with the only 
non zero element in $H^1(B\C_{2+})$, i.e. the polynomial generator $b$. This concludes the proof.
\end{proof}

\section{Conjugation spaces are pure}\label{sec:conjimpliespure}
We prove in this section that any conjugation space is homologically pure. This provides the characterization of conjugation spaces in terms of
purity, our main contribution in this paper. We start with a lemma.

\begin{lemma}\label{lem:lift}
Let $X$ be a conjugation space. For any class $x \in \HH^{2n} (X^u)$, there is a class $\tilde x \in \HF^{n(1+\alpha)} (X)$
such that the restriction $\rho(\tilde x) = u^{n} x \in \underline{\HF}_{\C_2}^{\star}(X) \cong H^*(X^u) [u^{\pm 1}]$.
\end{lemma}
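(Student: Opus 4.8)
The plan is to use the defining data of a conjugation space, namely the $\HH^*$-frame $(\kappa_0,\sigma)$, to produce the desired equivariant lift. First I would recall that by definition there is an additive section $\sigma\colon \HH^{2*}(X^u)\to \HH^{2*}(X_{h\C_2})$ of the restriction $\rho\colon \HH^{2*}(X_{h\C_2})\to \HH^{2*}(X^u)$. By Lemma~\ref{lemma:EC} we may identify $\HH^{2n}(X_{h\C_2})=\HH^{2n}(E\C_{2+}\wedge_{\C_2}X)$ with a piece of the equivariant cohomology $\HF^{\star}(E\C_{2+}\wedge X)$, concretely $\sigma(x)$ gives a class in $\HF^{n(1+\alpha)}(E\C_{2+}\wedge X)$ (after multiplying by the appropriate power of the periodicity element $u$ to land in the regular-representation line). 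So the immediate output of the frame is a class on the Borel construction, not on $X$ itself; the task is to promote it across the isotropy separation sequence.

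The key step is then to analyze the long exact sequence in $\HF^{\star}$-cohomology associated to the cofiber sequence
\[
E\C_{2+}\wedge X \longrightarrow X \longrightarrow \widetilde{E\C_2}\wedge X
\]
from Lemma~\ref{em:univspaces}. Applying $\HF^{\star}(-)$ we get
\[
\cdots \to \HF^{n(1+\alpha)}(\widetilde{E\C_2}\wedge X)\to \HF^{n(1+\alpha)}(X)\xrightarrow{\ \iota^*\ } \HF^{n(1+\alpha)}(E\C_{2+}\wedge X)\xrightarrow{\ \partial\ }\HF^{n(1+\alpha)+\alpha}(\widetilde{E\C_2}\wedge X)\to\cdots
\]
I would show that the class $\sigma(x)$ (viewed in $\HF^{n(1+\alpha)}(E\C_{2+}\wedge X)$) lifts to a class $\tilde x\in\HF^{n(1+\alpha)}(X)$ by checking that its boundary $\partial(\sigma(x))$ vanishes. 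Here the geometric fixed points enter: $\HF^{\star}(\widetilde{E\C_2}\wedge X)$ is computed from $\Phi^{\C_2}(X\wedge\HF)=\Sigma^\infty(X^{\C_2})\wedge\HH[b]$ by Propositions~\ref{prop:propgeofixedpoints} and \ref{prop:geofixehf}. The point is that the map $\partial$ factors through a comparison of the Borel theory with the geometric-fixed-point theory, and the conjugation equation $r\circ\sigma(x)=\kappa_0(x)b^n+lt_n$ says precisely that $\sigma(x)$ has the same image under $r$ as the class $\kappa_0(x)b^n+lt_n\in\HH^*(X^{\C_2})[b]$ does, so the obstruction to lifting is controlled and dies. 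Concretely, once $\tilde x$ exists, one computes $\rho(\tilde x)$: by naturality of the restriction Mackey-functor map and the commuting square with $E\C_{2+}\wedge X\to X$, the restriction of $\tilde x$ maps to the restriction of $\sigma(x)$, which is $\rho(\sigma(x))=x$ up to the chosen power of $u$; tracking the $u$-normalization gives $\rho(\tilde x)=u^n x$ in $\underline{\HF}^{\star}_{\C_2}(X)\cong\HH^*(X^u)[u^{\pm1}]$.

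The main obstacle I expect is the vanishing of the boundary $\partial(\sigma(x))$, i.e.\ showing that the Borel class coming from $\sigma$ actually extends over $X$ rather than only over $E\C_{2+}\wedge X$. This is where one genuinely uses the conjugation equation (not just that $\sigma$ is a section): the equation pins down $r\circ\sigma(x)$ modulo lower-order terms in $b$, and one must translate this into the statement that the image of $\sigma(x)$ in $\HF^{\star}(\widetilde{E\C_2}\wedge X)\cong\HH^*(X^{\C_2})\otimes\HF^{\star}[b]$, under the connecting map, is zero. A clean way to organize this is to note that the relevant $\HF^{\star}$-coefficient groups in the line $n(1+\alpha)+\alpha=-n+(2n+1)\alpha$ are concentrated appropriately (read off Figure~\ref{fig:strucHF}: this degree lies in the region governed by the functor $L$, whose fixed-point part is detected after $a$-inversion), so that the boundary lands in a group that is identified with part of $\HH^*(X^{\C_2})$ via geometric fixed points, and there the conjugation equation forces it to vanish. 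Alternatively, if the authors proceed more directly, they may simply exhibit $\tilde x$ by hand as a sum of products of $a$, $u$, $\kappa_i^e$-type classes and a chosen equivariant extension, but the sequence-theoretic argument above seems the most robust route. A secondary, purely bookkeeping, difficulty is keeping the $u$-powers and $RO(\C_2)$-degrees consistent throughout the identifications $\HH^{2n}(X_{h\C_2})\leftrightarrow \HF^{n(1+\alpha)}(E\C_{2+}\wedge X)$ and $\underline{\HF}^{\star}_{\C_2}(X)\cong\HH^*(X^u)[u^{\pm1}]$ of Lemma~\ref{lemma:restriction}.
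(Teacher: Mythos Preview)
Your approach is essentially the paper's: lift $x$ to $\HF^{n(1+\alpha)}(E\C_{2+}\wedge X)$ via the section $\sigma$, then show the boundary in the isotropy separation long exact sequence vanishes by invoking the conjugation equation (the paper makes this step explicit by observing that $X^{\C_2}\hookrightarrow X$ induces an isomorphism on $\HF^{\star}(\widetilde{E\C_2}\wedge-)$, so $\partial$ factors through the restriction $r$ to the Borel cohomology of the fixed points, where the conjugation equation exhibits $r(\sigma(x))$ as an element in the image of $\HF^{\star}(X^{\C_2})$ and hence killed by exactness). One minor correction: the connecting map lands in degree $n(1+\alpha)+1$, not $n(1+\alpha)+\alpha$.
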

\begin{proof} 
The isotropy separation and the inclusion $X^{\C_2} \rightarrow X$ provide a commutative diagram
\[
\xymatrix
{ 
X^{\C_2} \ar[d] & \ar[d] \ar[l] E{\C_2}_+ \wedge X^{\C_2} & \ar[d] \ar[l] \Sigma^{-1} \widetilde{E{\C_2}} \wedge X^{\C_2} \\
X & \ar[l] E{\C_2}_+ \wedge X & \ar[l] \Sigma^{-1} \widetilde{E{\C_2}} \wedge X  
}
\]
In terms of Mackey functors, this yields a commutative diagram of $RO(\C_2)$-graded groups:
\[
\xymatrix@C=-0,1cm
{
\HH^*(X^{\C_2})[u^{\pm 1}] \ar[rr]^{\cong} \ar@/_/[dr]_{\tr} && \HH^*(X^{\C_2})[u^{\pm 1}] \ar@/_/[dr]_{\tr} \ar[rr]  && 0 \ar@/_/[dr]  \\
	& \HF^{\star}(X^{\C_2}) \ar[rr] \ar@/_/[ul]_{\rho} &&\HH^{\ast}(X^{\C_2})[a,u^{\pm1}] \ar@/_/[ul]_{\rho}  \ar[rr]  && **[r]{\HF^{\star-1}
	(\widetilde{E\C_2} \wedge X^{\C_2})} \ar@/_/[ul] \\
\HH^*(X)[u^{\pm 1}] {\ar[rr]^{\quad \quad \cong} |!{[d];[dr]}\hole} \ar@/_/[dr]_{\tr} \ar[uu]  && \HH^*(X)[u^{\pm 1}] \ar@/_/[dr]_{\tr}    
\ar[rr] |!{[dl];[dr]}\hole \ar[uu] |!{[ul];[ur]}\hole  && 0 \ar@/_/[dr] \ar[uu] |!{[ul];[ur]}\hole  \\
	& \HF^{\star}(X) \ar@/_/[ul]_{\rho}  \ar[rr] \ar[uu]  && \HF^{\star}(E\C_{2+} \wedge X)  \ar@/_/[ul]_{\rho} \ar[rr]_{\delta} \ar[uu]_(0.3){r_{\C_2}} && **[r]
	{\HF^{\star-1}(\widetilde{E\C_2} \wedge X)} \ar@/_/[ul] \ar[uu]^=  \\   
}
\]
where the front face corresponds to the evaluation at $\C_2$. As for the back face, corresponding to the evaluation at the trivial subgroup, 
the two zero entries come from the fact that  $\widetilde{E\C_2}^u$ is contractible and on the left hand side we incorporated the identification 
from Lemma~\ref{lemma:restriction}, which explains also the isomorphism between the middle and left entries, since there is also a non-pictured 
zero on the left of the diagram. 

The last group we have to identify is $\HF^\star(E\C_{2+} \wedge X^{\C_2})$, sitting in the middle of the top row in the front face. 
We first use Proposition~\ref{prop:cohoesptriv} to identify it with  $\HF^\star(E\C_{2+}) \otimes \HH^*(X^{\C_2})$. We obtain the claimed isomorphism
with $\HH^{\ast}(X^{\C_2})[a,u^{\pm1}]$ since $\HF^\star(E\C_{2+}) \cong \F[a, u^{\pm 1}]$ by Lemma~\ref{lemma:EC}.
		
For any element $x \in \HH^{2n} (X^u)$, consider the element $u^{n}x \in \HH^{n(1+\alpha)}(X)[u^{\pm 1}]$ in the back 
lower left corner of the diagram.
We will exhibit a lift $\tilde{x} \in \HF^{n(1+\alpha)}(X)$ of this element (that is an element $\tilde{x}$ such that $\rho(\tilde{x})= u^{n}x$). \\
		
By hypothesis, $X$ being a conjugation space, the restriction 
$\rho \colon \HF^{\ast}(E\C_{2+} \wedge X) \cong \HH^*(E\C_{2+}\wedge_{\C_2}X) \rightarrow \HH^*(X)$ is surjective
in integral grading, where the isomorphism has been established in Lemma~\ref{lemma:EC}.
Hence, by $u$-periodicity, surjectivity holds true in arbitrary $RO(\C_2)$-grading. 
In particular  $u^{n}x$ admits a lift $u^{n}s(x)$ in $\HF^{n(1+\alpha)}(E\C_{2+} \wedge X)$.
		
Now, by commutativity of the front diagram, $\delta$ is the composite 
\[
\HF^{\star}(E\C_{2+} \wedge X) \stackrel{r_{\C_2}}{\longrightarrow} \HF^{\star}(E\C_{2+}) \otimes H\F^*(X^{\C_2}) \rightarrow 
\HF^{\star-1}(\widetilde{E\C_2} \wedge X^{\C_2}),
\]
where, by construction, the first map is given by the conjugation equation: 
\[
r_{\C_2} [u^{n} s (x)] = u^{n} (\sum \kappa_i(x) (a u^{-1})^{n-i}) = a^n \kappa_0(x) + \sum_{i=1}^n \kappa_i(x) u^{i}a^{n-i}
\]
As observed in the proof of Theorem~\ref{prop:homolpureisconj}, the polynomial generator denoted by $u$ in \cite{MR2171799} is $au^{-1}$ here. The sum above defines actually an element
in $\HF^{\star} \otimes H\F^*(X^{\C_2})$, hence is sent to zero in $\HF^{n(\alpha) +n-1}(\widetilde{E\C_2} \wedge X)$.

Thus, $\delta( u^{n}s(x)) = 0$ and there is a lift $\tilde{x} \in \HF^{n(1+\alpha)}(X)$ of the element $u^{n}s(x)$. 
By naturality, $\rho(\tilde{x})$ is identified with $\rho[u^{n}s(x)] = u^{n} x$ since $s$ is a section of $\rho$.
\end{proof}
		
The above proof gives in fact a bit more.		
\begin{proposition}\label{prop:kappas}
Let $X$ be a conjugation space and $x \in \HH^{2n} (X^u)$. The classes $\kappa_i(x)$ coming from the conjugation
equation coincide with the classes $\kappa_i^e(\tilde x)$.
\end{proposition}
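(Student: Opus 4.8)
The plan is to unwind the definitions of both families of classes and show they are computed by the same map. Recall from Definition~\ref{def:tildekappai} that, for a class $y \in \HF^{n(1+\alpha)}(X)$, the equivariant $\kappa$-classes $\kappa_i^e(y) \in \HH^{n+i}(X^{\C_2})$ are obtained by restricting $y$ along $X^{\C_2} \hookrightarrow X$ to get $\kappa_T(y) \in \HF^{n(1+\alpha)}(X^{\C_2})$, and then decomposing $\kappa_T(y) = \sum_{i=0}^n a^{n-i}u^i \otimes \kappa_i^e(y)$ using Proposition~\ref{prop:cohoesptriv}. So given $x \in \HH^{2n}(X^u)$ and the lift $\tilde x \in \HF^{n(1+\alpha)}(X)$ produced in Lemma~\ref{lem:lift}, I must identify $\kappa_T(\tilde x)$ with the expression $\sum_{i=0}^n \kappa_i(x) a^{n-i} u^i$ appearing in that proof.

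The key observation is that this identification is essentially already in the proof of Lemma~\ref{lem:lift}. There $\tilde x$ was constructed as a lift along $\rho$ of the element $u^n s(x) \in \HF^{n(1+\alpha)}(E\C_{2+}\wedge X)$, where $s$ is the set-theoretic section of $\rho$ coming from the conjugation-space hypothesis; and the computation $r_{\C_2}[u^n s(x)] = a^n \kappa_0(x) + \sum_{i=1}^n \kappa_i(x) u^i a^{n-i}$ was made precisely via the conjugation equation. First I would invoke the commutative cube in that proof: the map $\HF^{n(1+\alpha)}(X) \to \HF^{n(1+\alpha)}(E\C_{2+}\wedge X)$ sends $\tilde x$ to $u^n s(x)$ (since $\tilde x$ was chosen as a lift of exactly this element, and the horizontal map on the relevant face is induced by $E\C_{2+}\wedge X \to X$). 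Then I would use the commutative square from the proof of Theorem~\ref{prop:homolpureisconj},
\[
\xymatrix{
\HF^{\ast(1+\alpha)}(X) \ar[r] \ar[d]_{\kappa_T} & \HF^{\ast(1+\alpha)}(E\C_{2+}\wedge X) \ar[d]\\
\HF^{\ast(1+\alpha)}(X^{\C_2}) \ar[r] & \HF^{\ast(1+\alpha)}(E\C_{2+}\wedge X^{\C_2}),
}
\]
to conclude that the image of $\kappa_T(\tilde x)$ in $\HF^{\ast(1+\alpha)}(E\C_{2+}\wedge X^{\C_2}) \cong \HF^\star(E\C_{2+})\otimes \HH^\ast(X^{\C_2})$ equals the image of $u^n s(x)$ under $r_{\C_2}$, namely $a^n\kappa_0(x) + \sum_{i=1}^n \kappa_i(x) u^i a^{n-i}$.

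It remains to note that the vertical right-hand map $\HF^{\ast(1+\alpha)}(X^{\C_2}) \to \HF^{\ast(1+\alpha)}(E\C_{2+}\wedge X^{\C_2})$ is injective in the relevant degrees, so that $\kappa_T(\tilde x)$ itself, and hence each coefficient $\kappa_i^e(\tilde x)$, is determined: matching coefficients of $a^{n-i}u^i$ gives $\kappa_i^e(\tilde x) = \kappa_i(x)$. The injectivity follows because, as computed in Lemma~\ref{lemma:EC} and Proposition~\ref{prop:cohoesptriv}, the map in question is $(\text{inclusion }\HF^\star \hookrightarrow \HF^\star(E\C_{2+}) = \F[a,u^{\pm1}]) \otimes \mathrm{id}_{\HH^\ast(X^{\C_2})}$, and on the coefficient level the inclusion $\HF^\star \to \F[a,u^{\pm1}]$ sends the monomials $a^{n-i}u^i$ (which lie in the polynomial part $\F[a,u]$, as $0 \le i \le n$) to distinct nonzero monomials. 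I expect the main obstacle to be purely bookkeeping: making sure the various identifications of $\kappa_T$ (its equivariant incarnation versus the one precomposed with $\rho^{-1}$ on pure spaces) and of the section ($s$ from the conjugation-space structure versus $\sigma_T$, $\sigma$) are consistently tracked through the cube and the square, and that the degree bound $0 \le i \le n$ is genuinely used to stay inside the polynomial subring where $a$ and $u$ are not inverted. There is no deep new input; everything needed is assembled from the two diagrams already displayed.
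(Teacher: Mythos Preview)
Your proposal is correct and follows essentially the same route as the paper: both use the commutativity of the square relating $\HF^{\star}(X)$, $\HF^{\star}(X^{\C_2})$, $\HF^{\star}(E\C_{2+}\wedge X)$, and $\HF^{\star}(E\C_{2+}\wedge X^{\C_2})$ (the front left face of the cube in Lemma~\ref{lem:lift}) to identify the image of $\kappa_T(\tilde x)$ with $r_{\C_2}[u^{n}s(x)]$, and then compare coefficients of $a^{n-i}u^{i}$. Your version is in fact a bit more explicit, since you spell out the injectivity of $\HF^{\star}(X^{\C_2}) \to \HF^{\star}(E\C_{2+}\wedge X^{\C_2})$ on the monomials $a^{n-i}u^{i}$ with $0\le i\le n$, a step the paper leaves implicit.
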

\begin{proof}
We constructed for $x$ a lift $\tilde{x} \in \HF^{n(1+\alpha)}(X)$ and restriction to the fixed points $X^{\C_2}$ gives us classes
\[
\sum_{i=0}^{n}\kappa_{i}^e(\tilde x)u^{i}a^{n-i} \in \HF^{n(1+ \alpha)}(X^{\C_2})
\]
	
Commutativity of the front left-hand square in the diagram above yields, by construction of $\tilde{x}$, the equality:
\[
\sum_{i=0}^{n}\kappa_{i}^{e}(\tilde{x})u^{i}a^{n-i} = \sum_{i=0}^{n}\kappa_{i}(x)u^{i}a^{n-i}
\]
hence both types of $\kappa$ classes coincide.
\end{proof}

\begin{theorem}\label{thm:conjishomolpur}
If $X$ is a conjugation space of finite type, then it is homologically pure  of finite type.
\end{theorem}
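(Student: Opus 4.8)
The plan is to build an explicit weak equivalence of right $\HF$-modules $X \wedge \HF \simeq \bigvee_{i} \Sigma^{n_i(1+\alpha)}\HF$ by choosing lifts of a cohomology basis and assembling them into a single map, then checking it is an equivalence by detecting on underlying spectra and on geometric fixed points (Proposition~\ref{prop:propgeofixedpoints}(2)). First I would pick, for each degree $2n$, an $\F$-basis $\{x_{i}\}_{i \in I_n}$ of $\HH^{2n}(X^u)$; by finite type $I_n$ is finite. By Lemma~\ref{lem:lift}, each $x_i$ lifts to a class $\tilde x_i \in \HF^{n_i(1+\alpha)}(X)$ with $\rho(\tilde x_i) = u^{n_i} x_i$. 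Viewing $\tilde x_i$ as a map of spectra $S^{-n_i(1+\alpha)} \wedge X \to \HF$, i.e.\ adjointly a map $S^{-n_i(1+\alpha)} \wedge \Sigma^\infty_{\C_2} X \to \HF$, and smashing with $\HF$ and using the module structure, I get a right $\HF$-module map
\[
\Sigma^{-n_i(1+\alpha)}(X \wedge \HF) \longrightarrow \HF,
\]
equivalently $\phi_i\colon X \wedge \HF \to \Sigma^{n_i(1+\alpha)}\HF$. Taking these together over all $i \in I = \bigsqcup_n I_n$, and using that by finite type the target product is a wedge (the relevant homotopy groups vanish in all but finitely many summands in each bidegree, as in Lemma~\ref{lem:relequivnonequiv}(1)), I obtain a single right $\HF$-module map
\[
\Phi\colon X \wedge \HF \longrightarrow \bigvee_{i \in I}\Sigma^{n_i(1+\alpha)}\HF.
\]

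Next I would check $\Phi$ is an underlying equivalence. After forgetting the action, $(\tilde x_i)^u$ represents $\rho(\tilde x_i) = u^{n_i}x_i$ in $\underline{\HF}^\star_{\C_2}(X) \cong \HH^\ast(X^u)[u^{\pm1}]$ (Lemma~\ref{lemma:restriction}), so in integral degree $(\phi_i)^u$ represents $x_i$ in $\HH^{2n_i}(X^u)$. Since $\{x_i\}$ is a basis and $\HH^\ast(X^u)$ is concentrated in even degrees (Lemma~\ref{lem:relequivnonequiv}(2)), the classical computation --- exactly the argument of \cite[Lemma~II.11.1]{MR1324104} --- shows $\Phi^u\colon X^u \wedge \HH \to \bigvee_i \Sigma^{2n_i}\HH$ is a weak equivalence. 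Then I would check $\Phi$ is a geometric fixed point equivalence. By Proposition~\ref{prop:propgeofixedpoints}(3),(4), $\Phi^{\C_2}(X \wedge \HF) \simeq \Sigma^\infty(X^{\C_2}) \wedge \Phi^{\C_2}(\HF) \simeq X^{\C_2}_+ \wedge \HH[b]$, and by Lemma~\ref{lem:kappanadPhi} the map $\Phi^{\C_2}(\phi_i)\colon S^{n_i} \wedge X^{\C_2}_+ \wedge \HH[b] \to S^{n_i}\wedge\HH[b]$ is (a shift of) the map picking out $\sum_j \kappa^e_j(\tilde x_i) b^j$, whose bottom component $\kappa^e_0(\tilde x_i)$ equals $\kappa_0(x_i)$ by Proposition~\ref{prop:kappas}. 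Since $\kappa_0$ is an isomorphism (Proposition~\ref{prop:kappaisohompur} applies once we know purity --- so here I would instead invoke that $\kappa_0$ is part of the conjugation frame hypothesis, hence an isomorphism onto $\HH^\ast(X^{\C_2})$), the classes $\kappa_0(x_i)$ form a basis of $\HH^\ast(X^{\C_2})$, and the same \cite[Lemma~II.11.1]{MR1324104}-style argument over $\HH[b]$ shows $\Phi^{\C_2}(\Phi)$ is an equivalence.

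Having verified both conditions of Proposition~\ref{prop:propgeofixedpoints}(2), I conclude $\Phi$ is a weak equivalence of right $\HF$-modules, so $X$ is homologically pure; the finite type condition on $I$ is immediate since $|I_n|=\dim_\F \HH^{2n}(X^u) < \infty$ and these are the only $i$ with $n_i = n$. The main obstacle I anticipate is the geometric fixed points step: one must be careful that the splitting $\Phi^{\C_2}(\HF)\simeq \HH[b]$ is compatible with the module structures so that $\Phi^{\C_2}(\Phi)$ is genuinely an $\HH[b]$-module map detectable by its effect on $\pi_0$-level cohomology bases, and that the filtration argument ``over $\HH[b]$'' (rather than over $\HH$) really does reduce to checking the bottom components $\kappa_0(x_i)$ form a basis --- this is where the conjugation equation, encoded in the higher $\kappa_j$'s being determined by lower data, does no harm because $\HH[b]$ is a polynomial extension and a map of free $\HH[b]$-modules is an equivalence iff it is one after $\otimes_{\HH[b]}\HH$, i.e.\ after killing $b$. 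A secondary technical point is the passage from products to wedges in the target, which requires the finite type hypothesis and the vanishing of $\pi_\star$ of $\Sigma^{n(1+\alpha)}\HF$ outside a cone, exactly as in Lemma~\ref{lem:relequivnonequiv}(1).
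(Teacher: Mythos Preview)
Your approach is essentially identical to the paper's: lift a cohomology basis via Lemma~\ref{lem:lift}, assemble into an $\HF$-module map, and verify it is an equivalence on underlying spectra and on geometric fixed points, the latter via the identification $\kappa_0^e(\tilde x_i)=\kappa_0(x_i)$ from Proposition~\ref{prop:kappas} together with a Nakayama argument over $\F[b]$. One small logical slip: you invoke Lemma~\ref{lem:relequivnonequiv}(2) for the even concentration of $\HH^\ast(X^u)$, but that lemma is stated for homologically pure spaces, which is precisely what you are proving; instead, even concentration is part of the conjugation space hypothesis (implicit in the existence of the $\HH^\ast$-frame), so just cite that directly, as you correctly did when you caught yourself on the $\kappa_0$ isomorphism.
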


\begin{proof}
Fix a basis of $\HH^{2\ast}(X)$,  finite in any given degree by assumption, and lift each basis element $x_i$, say of degree $2n_i$,  to an element 
\[
\widetilde{x}_i \in \HF^{n_i(1+\alpha)}(X)=[S^{-n_i(1+\alpha)}\wedge X,\HF]^{\C_2} \cong [ X,S^{n_i(1+\alpha)}\wedge\HF]^{\C_2}
\]
Realize each such element by a map $X \rightarrow S^{n_i(1+\alpha)}\wedge \HF$. By additivity we assemble them into a map of equivariant spectra:
		\[
		 X \rightarrow \bigvee_{i \in I} S^{n_i(1+\alpha)} \wedge \HF
		\]
which we extend by linearity into a map of $\HF$-modules:
		\[
		f\colon X \wedge \HF \rightarrow \bigvee_{i \in I} S^{n_i(1+\alpha)} \wedge \HF.
		\]
Let us now check that $f$ is an equivalence, which will prove that $X$ is homologically pure and automatically of finite type. By construction it is an equivalence after forgetting the action. It is thus enough to show 
that it is an equivalence on geometric fixed points by Proposition~\ref{prop:propgeofixedpoints}.(2). We use the computation 
$\Phi^{\C_2}(\HF) = \HH[b]$ done in Proposition~\ref{prop:geofixehf}.		
\[
		\Phi^{\C_2}(f) \colon X^{\C_2} \wedge \HH[b] \rightarrow \bigvee_{i \in I} S^{n_i} \wedge \HH[b].
\]
On homotopy groups, we get a map of $\pi_\ast(H[b])=\F_2[b]$-modules:
\[
\Phi^{\C_2}_\ast(f) \colon \HH_\ast(X^{\C_2})[b] \rightarrow \bigoplus_{i \in I} \HH_\ast(S^{n_i})[b].
\]
After modding out by the maximal ideal $(b)$, we are left with a homomorphism $\HH_\ast(X^{\C_2}) \rightarrow \bigoplus_{i \in I} \HH_\ast(S^{n_i})$
which, by Lemma~\ref{lem:kappanadPhi} is nothing but the map induced in homotopy by summing the maps 
$\kappa_0^e(\tilde{x}_i)$. Proposition~\ref{prop:kappas} shows that these classes are in fact the classes $\kappa_0(x_i)$ and 
$\kappa_0\colon\HH^{2\ast}(X)\rightarrow \HH^\ast(X^{\C_2})$ is an isomorphism by (conjugation) assumption. We conclude by Nakayama's Lemma that 
$\Phi_\ast^{\C_2}$ is an isomorphism. Notice that we needed the finite type assumption to apply Nakayama's Lemma,
or rather a classical consequence of it, see for example \cite[Proposition~2.8]{MR3525784}.
\end{proof}

\section{Properties of conjugation spaces}
\label{sec:properties}

The main result of the previous part can be rephrased as follows. 
\begin{theorem}
\label{thm:rickadef}
Let $X$ be a $\C_2$-space of finite type. Then $X$ is a conjugation space if and only if it is homologically pure  of finite type. 
\end{theorem}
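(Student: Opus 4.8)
The plan is straightforward: Theorem~\ref{thm:rickadef} is simply the conjunction of the two implications already established in the previous sections, together with a small observation that the finite type hypotheses match up. First I would recall that the statement has two directions. For the "if" direction, suppose $X$ is a $\C_2$-space that is homologically pure of finite type. Then Theorem~\ref{prop:homolpureisconj} applies directly and shows that $X$ is a conjugation space (of finite type). For the "only if" direction, suppose $X$ is a conjugation space of finite type, i.e.\ its ordinary mod $2$ cohomology $\HH^\ast(X)$ is finitely generated in each degree. Then Theorem~\ref{thm:conjishomolpur} shows that $X$ is homologically pure of finite type.

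The only point that requires a word of care is the compatibility of the two notions of "finite type" that appear. For spaces, finite type means $\HH^\ast(X)$ is finitely generated in each degree; for homologically pure spaces, finite type means that in the splitting $X \wedge \HF \simeq \bigvee_{i \in I}\Sigma^{n_i(1+\alpha)}\HF$ only finitely many $n_i$ equal any given $n$. By Lemma~\ref{lem:relequivnonequiv}(2), the underlying splitting gives $\HH^\ast(X^u)$ concentrated in even degrees $2n_i$, so the number of indices with $n_i = n$ is exactly $\dim_{\bF}\HH^{2n}(X^u)$; hence a homologically pure space is of finite type (in the purity sense) precisely when it is of finite type as a space. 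This is already noted inside the proofs of Theorems~\ref{prop:homolpureisconj} and \ref{thm:conjishomolpur}, so no new argument is needed.

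Concretely, the proof reads: \emph{This is the combination of Theorem~\ref{prop:homolpureisconj} and Theorem~\ref{thm:conjishomolpur}, once one observes that for a homologically pure space the finite type condition in the sense of Definition~\ref{def:homolpurity} is equivalent to $\HH^\ast(X)$ being of finite type, by Lemma~\ref{lem:relequivnonequiv}(2).} There is no genuine obstacle here — all the work has been done in Sections~\ref{sec:pureimpliesconj} and \ref{sec:conjimpliespure}. If there is any subtlety at all, it is purely bookkeeping: making sure that the degree-halving isomorphism $\kappa_0$ produced in the "pure $\Rightarrow$ conjugation" direction and the one assumed in the "conjugation $\Rightarrow$ pure" direction are compatible, which is exactly the content of Proposition~\ref{prop:kappas} and the identification $\kappa_0 = \kappa_0^e$ from Lemma~\ref{lem:kappanadPhi}; but since the theorem only asserts the logical equivalence of the two properties, this compatibility is not even needed for the statement as phrased.
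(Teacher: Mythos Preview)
Your proposal is correct and matches the paper's approach exactly: the paper presents Theorem~\ref{thm:rickadef} as a restatement of the combination of Theorem~\ref{prop:homolpureisconj} and Theorem~\ref{thm:conjishomolpur}, with no additional argument given. Your remarks on the compatibility of the two finite type notions are accurate and make explicit what the paper leaves implicit.
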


This formulation hints at the fact that being a conjugation space is a property, and not some additional structure on $X$. 
Actually, this (and more) is true. It has been shown that if a $\C_2$-space $X$ admits a conjugation frame, then this structure is unique, 
functorial, and it preserves the multiplicative structure by  \cite[Theorem~3.3, Corollary~3.12]{MR2171799}). It is also 
compatible with the action of the Steenrod operations by work of Franz and Puppe, \cite{MR2198191}.
Although these results are already known, the proofs are quite computational, and we will see how to benefit from the definition 
we advertise here to recover these results in a straightforward, conceptual, and more illuminating fashion.

\subsection{Uniqueness of the conjugation frame and direct consequences}

We offer for completeness a proof of the uniqueness of the conjugation frame. The argument is similar to that of  \cite[Corollary~3.12]{MR2171799}).

\begin{proposition}
Let $X$ be a conjugation space. The conjugation frame $(\kappa, \sigma)$ is  unique.
\end{proposition}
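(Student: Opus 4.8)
The plan is to show that both the halving isomorphism $\kappa_0$ and the section $\sigma$ are uniquely determined by naturality and minimal structural constraints, exploiting the fact that $X$ is now known to be homologically pure (Theorem~\ref{thm:conjishomolpur}) and hence that the equivariant cohomology $\HF^{\ast(1+\alpha)}(X)$, together with its Mackey functor structure, is entirely intrinsic to $X$. The key point is that the geometric maps $\kappa_T$ and $\sigma_T$ of Definitions~\ref{def:tildekappa} and~\ref{def:tildesigma} are induced by canonical maps of $\C_2$-spaces (the fixed-point inclusion $X^{\C_2}\to X$ and the projection $E\C_{2+}\wedge X\to X$), hence are themselves canonical; the only freedom in a conjugation frame is in the identifications of source and target with ordinary (Borel) cohomology, and these are pinned down by the conjugation equation.

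First I would recall that any conjugation frame $(\kappa_0,\sigma)$ on $X$ must, by the conjugation equation, have $\sigma$ lift each $x\in\HH^{2m}(X)$ to a class whose restriction $r\circ\sigma(x)$ has leading coefficient $\kappa_0(x)b^m$. The argument of Lemma~\ref{lem:lift} and Proposition~\ref{prop:kappas} shows that such a lift $\widetilde{x}\in\HF^{m(1+\alpha)}(X)$ exists and that, once we know $X$ is pure, the isomorphism $\HF^{m(1+\alpha)}(X)\xrightarrow{\rho}\HH^{2m}(X^u)$ of Lemma~\ref{lem:relequivnonequiv}(4) forces $\widetilde{x}$ to be \emph{unique}: two lifts of $x$ differ by an element of $\ker\rho$, which is zero in this bidegree. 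Therefore the section $\sigma$ is determined: $\sigma(x)$ is the image of the unique lift $\widetilde{x}$ under $\sigma_T$ followed by the canonical identification $\HF^{m(1+\alpha)}(E\C_{2+}\wedge X)\cong\HH^{2m}(E\C_{2+}\wedge_{\C_2}X)$ of Lemma~\ref{lemma:EC}. Similarly, $\kappa_0(x)=\kappa_0^e(\widetilde{x})$ is obtained by applying $\kappa_T$ (equivalently, geometric fixed points followed by projection onto the degree-zero summand of $\Phi^{\C_2}\HF\simeq\HH[b]$, by Lemma~\ref{lem:kappanadPhi}) to the same unique class $\widetilde{x}$.

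The main obstacle is verifying that the class $\widetilde{x}$ constructed from an \emph{arbitrary} conjugation frame is forced to be the same as the one produced in our geometric construction — i.e., that the conjugation equation does not merely constrain the leading term but, together with purity, rigidifies the whole lift. I would handle this by the argument just sketched: purity makes $\rho\colon\HF^{m(1+\alpha)}(X)\to\HH^{2m}(X^u)$ an isomorphism (Lemma~\ref{lem:relequivnonequiv}(4)), so \emph{any} frame's $\sigma$ must, after composing with the Borel restriction $\rho\colon\HH^{2m}(E\C_{2+}\wedge_{\C_2}X)\to\HH^{2m}(X)$, give back $x$; lifting through the commutative square of Lemma~\ref{lem:sigmais section} shows $\sigma(x)$ comes from a class in $\HF^{m(1+\alpha)}(E\C_{2+}\wedge X)$ restricting to the unique $\widetilde{x}$, and then the front-face diagram of Lemma~\ref{lem:lift} identifies the remaining coefficients $\kappa_i$ with $\kappa_i^e(\widetilde x)$ as in Proposition~\ref{prop:kappas}. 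Hence both $\kappa_0$ and $\sigma$ agree with our canonically defined maps, and the frame is unique.

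Finally I would note that no appeal to the multiplicativity of the frame is needed: uniqueness is purely a statement about the additive maps, and it follows formally once purity has identified the relevant equivariant cohomology groups with their non-equivariant counterparts. This is why the proof parallels \cite[Corollary~3.12]{MR2171799} but is shorter — the rigidity that Hausmann–Holm–Puppe extract by a spectral-sequence comparison is here immediate from the isomorphism $\HF^{\ast(1+\alpha)}(X)\cong\HH^{2\ast}(X^u)$.
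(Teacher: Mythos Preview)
Your approach is correct and is genuinely different from the paper's for the uniqueness of $\sigma$. The paper argues uniqueness of $\kappa_0$ exactly as you do (via Proposition~\ref{prop:kappas}, which already needs the unique lift $\widetilde{x}$ and hence implicitly purity), but for $\sigma$ it reverts to the classical Hausmann--Holm--Puppe degree induction: writing $\sigma(x)-\sigma'(x)$ in the Leray--Hirsch basis furnished by $\sigma'$ as a sum $\sum\sigma'(d_{2k-2j})u^{2j}$ and then arguing by induction on $|x|$ that all $d_i$ vanish. Your argument bypasses this induction entirely: once purity gives that $\rho\colon\HF^{m(1+\alpha)}(X)\to\HH^{2m}(X^u)$ is an isomorphism, the lift $\widetilde{x}$ produced by Lemma~\ref{lem:lift} from \emph{any} frame's section is forced to be the unique preimage of $x$, so $\sigma(x)=\sigma_T(\widetilde{x})$ is determined. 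This is more in keeping with the paper's thesis that the frame is encoded geometrically in the equivariant cohomology, and it avoids the somewhat opaque inductive step.

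Two small points. First, both proofs tacitly use the finite type hypothesis (yours via Theorem~\ref{thm:conjishomolpur}, the paper's via the identification $\kappa_0=\kappa_0^e$, which presupposes the isomorphism of Lemma~\ref{lem:relequivnonequiv}(4)); the proposition is stated without it, so you should flag this. Second, your phrase ``lifting through the commutative square of Lemma~\ref{lem:sigmais section}'' is slightly misleading: the lift of $\sigma(x)$ from $\HF^{m(1+\alpha)}(E\C_{2+}\wedge X)$ back to $\HF^{m(1+\alpha)}(X)$ comes from the isotropy separation long exact sequence (the vanishing of $\delta$ established in Lemma~\ref{lem:lift} using the conjugation equation), not from that square; the square is then used to compute $\rho(\widetilde{x})$. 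This is an expository wrinkle, not a gap.
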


\begin{proof}
Let us assume that $X$ is connected, for simplicity, and consider
$(\kappa,\sigma)$ and $(\kappa',\sigma')$, two conjugation frames on $X$. We have already shown in Proposition~\ref{prop:kappas}, that 
$\kappa = \kappa_0^e= \kappa_0$. Let us show that  $\sigma = \sigma'$. 

Since $\rho \sigma = id_{H^*(X)} = \rho \sigma'$ we have
\[
\sigma(x) = \sigma'(x) + \textrm{ higher terms in } u
\]
for all $x \in H^{2k}(X)$, for all $k \geq 1$. Thus, for degree reasons, we can write $\sigma(x)$ as
\[
\sigma(x) = \sigma'(x) + \sigma'(d_{2k-2})u^2 + \hdots + \sigma'(d_0)u^{2k}
\]
for elements $d_i$ in degree $i$. In particular, if $|x|=0$, then $\sigma(x) = \sigma'(x)$.
The proof follows then by induction on the degree of $x$.
\end{proof}

In particular, for a cohomologically pure space, the frame constructed in Section \ref{sec:pureimpliesconj} is the unique one.
The naturality of this construction implies directly the following.

\begin{proposition}
The conjugation frame is functorial, and multiplicative.
\end{proposition}

\begin{proof}
This in now immediate since the $\HH$-frame is induced by maps of spectra.
\end{proof}

\subsection{Compatibility with the action of the Steenrod algebra}
The idea is to compare the action of the equivariant Steenrod algebra on the cohomology of $X$ with the action of the ordinary
Steenord algebra on $X^u$ and $X^{\C_2}$.
As for ordinary cohomology theories one can consider the set of equivariant cohomology operations in $\HF^\star$-cohomology, which is again a 
Hopf algebroïd. The structure of this algebra has been determined by Hu and Kriz~\cite{HK}. We briefly recall their main theorem
and refer also to recent work of Hill \cite{HillPreprint}. As customary let us adopt the following notation:
\[
\mathcal{A}^\star = [\HF,\HF]^\star
\]
\[
\mathcal{A}_\star = \HF_{\star}(\HF)
\]
The relation between these two objects is via duality, as there is an isomorphism of left $\HF$-modules:
\[
\hom_{\HF_\star}(\mathcal{A}_\star,\HF_\star) \cong \mathcal{A}^\star
\]

\begin{theorem}{\rm (Hu-Kriz, \cite[Theorem~6.41]{HK})}
\label{thm:stralgsteenequivduale}
	The $\HF_\star$-algebra $\mathcal{A}_\star$ admits the following presentation:
	\[
	\mathcal{A}_\star \cong \HF_\star[\xi_{i+1},\tau_i \ | \ i \geq 0]/(\tau_i^2 + a\tau_{i+1} + (a\tau_0 + u)\xi_{i+1})
	\]
	with degrees 
	\[
	|\xi_{i}| = (2^i-1)(1+\alpha) \quad |\tau_i| = (2^i-1)(1+\alpha) +1.
	\]
	Moreover $\mathcal{A}_\star$ has a comodule structure given by the following applications:
	\begin{enumerate}
		\item a right unit $\eta_R\colon \HF_\star \rightarrow \mathcal{A}_\star$ given by $\eta_R(a) = a$ and 
		$\eta_R(u)= a\tau_0 + \sigma{-1}$;
		\item a left unit map $\eta_L\colon \HF_\star\rightarrow \mathcal{A}_\star$ given by the standard inclusion;
		\item a counit $\varepsilon\colon \mathcal{A}_\star \rightarrow  \HF_\star$ uniquely determined by requiring that 
		$\varepsilon \eta_L = Id$ and $\varepsilon(a) = \varepsilon(u) = 0$;
		\item A coproduct $\Delta\colon \mathcal{A}_\star \rightarrow \mathcal{A}_\star \otimes_{\HF_\star} \mathcal{A}_\star$ given by:
		\[
		\Delta(\xi_i) = \sum_{j=0}^i \xi_{i-j}^{2^j} \otimes \xi_j
		\]
		\[
		\Delta(\tau_i) = \sum_{j=0}^i \xi_{i-j}^{2^j} \otimes \tau_j +\tau_i \otimes 1
		\]
		and for $h \in \HF_\star$ $\Delta(h) = \eta_L(h) \otimes 1$.
	\end{enumerate} 
\end{theorem}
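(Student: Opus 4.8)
This is Hu and Kriz's computation; I indicate the strategy one would follow. The aim is to determine the $RO(\C_2)$-graded ring $\mathcal A_\star=\pi_\star^{\C_2}(\HF\wedge\HF)$, and the plan is to reconstruct it from the two extremities of the isotropy separation sequence of $\HF\wedge\HF$ (Lemma~\ref{em:univspaces}): the underlying non-equivariant homotopy and the geometric fixed points, both of which are classical, glued along the connecting map.

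First I would record these two inputs. On the one hand $(\HF\wedge\HF)^u\simeq\HH\wedge\HH$, so by Milnor the underlying homotopy is the classical dual Steenrod algebra $\mathcal A_*^{\mathrm{cl}}=\F[\zeta_1,\zeta_2,\dots]$ with $|\zeta_i|=2^i-1$, and for degree reasons the restriction $\mathcal A_\star\to\mathcal A_*^{\mathrm{cl}}$ will send $\xi_i\mapsto\zeta_i^{2}$ and $\tau_i\mapsto\zeta_{i+1}$. On the other hand, $\Phi^{\C_2}$ is monoidal (Proposition~\ref{prop:propgeofixedpoints}) and $\Phi^{\C_2}(\HF)\simeq\HH[b]$ by Proposition~\ref{prop:geofixehf}, so $\Phi^{\C_2}(\HF\wedge\HF)\simeq\HH[b]\wedge\HH[b]$ has homotopy the polynomial ring $\F[b',b'']\otimes_{\F}\mathcal A_*^{\mathrm{cl}}$ on the two copies $b',b''$ of the degree-one class $b=a^{-1}u$ (Remark~\ref{rem:geofixehf}) coming from the two smash factors. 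Since $\widetilde{E\C_2}$ is the telescope of the $a$-tower (Proposition~\ref{prop:caracaper}) we get $\pi_\star^{\C_2}(\widetilde{E\C_2}\wedge\HF\wedge\HF)\cong\mathcal A_\star[a^{-1}]$, and this must agree with the above ring with $a$ adjoined invertibly. Finally $E\C_{2+}\wedge\HF\wedge\HF$ is free, so its $\C_2$-homotopy is the associated Borel homology, which by a dual K\"unneth argument (dualizing Proposition~\ref{prop:cohoesptriv}) together with $\HF^\star(E\C_{2+})\cong\F[a,u^{\pm1}]$ (Lemma~\ref{lemma:EC}) is $\F[a,u^{\pm1}]\otimes_{\F}\mathcal A_*^{\mathrm{cl}}$, up to the usual $u$-regrading.

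Next I would assemble these into the long exact sequence
\[
\cdots\longrightarrow\pi_\star^{\C_2}(E\C_{2+}\wedge\HF\wedge\HF)\longrightarrow\mathcal A_\star\longrightarrow\mathcal A_\star[a^{-1}]\stackrel{\partial}{\longrightarrow}\cdots.
\]
Choosing $\xi_i\in\mathcal A_\star$ in degree $(2^i-1)(1+\alpha)$ restricting to $\zeta_i^2$ on underlying homotopy, and $\tau_i\in\mathcal A_\star$ in degree $(2^i-1)(1+\alpha)+1$ detecting the odd, $a$-torsion-free classes carried by the $b$-towers (with $\tau_0$ accounting for the difference of the two $b$-copies), one checks that the only relations that can occur are those forced by the requirement that the identity expressing $\tau_i^2$ in terms of $a$, $u$, $\tau_0$ and the $\xi$'s, valid after inverting $a$, already hold before localization, its failure producing a class that is simultaneously $a$-divisible and annihilated by a power of $a$. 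This yields $\tau_i^2+a\tau_{i+1}+(a\tau_0+u)\xi_{i+1}=0$. A Poincar\'e-series comparison with the known Borel and geometric pieces then shows there are no further relations and that $\mathcal A_\star$ is free over $\HF_\star$ on the monomials $\prod_i\xi_i^{n_i}\prod_j\tau_j^{\epsilon_j}$ with $\epsilon_j\le1$, which is the stated presentation. The comodule maps --- the two units, the counit, and the coproduct --- are induced by the canonical maps $\HF\to\HF\wedge\HF\to\HF$ and $\HF\wedge\HF\to\HF\wedge\HF\wedge\HF$, and their formulas are pinned down by naturality (they reduce to Milnor's structure maps on underlying homotopy), by $\HF_\star$-linearity, and by compatibility with the coefficients; the freeness just established guarantees that $(\HF_\star,\mathcal A_\star)$ is a genuine Hopf algebroid.

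The step I expect to be the main obstacle is the gluing along $\partial$: one must track precisely how the connecting homomorphism matches the $a$-torsion classes of $\HF^\star$ --- the negative cone in Proposition~\ref{prop:algstrHF} --- against the $b$-towers in $\pi_*(\HH[b]\wedge\HH[b])$, since it is exactly this matching that singles out the relation $\tau_i^2+a\tau_{i+1}+(a\tau_0+u)\xi_{i+1}$ among all elements of the same $RO(\C_2)$-degree. Carrying this out rigorously amounts to a careful bookkeeping of $\HF^\star$ tensored with $\mathcal A_*^{\mathrm{cl}}$ throughout the isotropy separation sequence, and is the heart of Hu and Kriz's original argument.
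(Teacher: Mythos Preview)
The paper does not prove this theorem: it is quoted verbatim from Hu and Kriz \cite[Theorem~6.41]{HK} and used as a black box, so there is no proof in the paper to compare your sketch against. Your outline is a reasonable summary of the Hu--Kriz strategy (isotropy separation, underlying and geometric-fixed-point inputs, gluing), and you correctly identify the bookkeeping along the connecting map as the delicate step; but since the paper itself supplies no argument, there is nothing further to match here.
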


Our first, trivial, observation concerns the compatibility of the equivariant Steenrod operations with the  map $\kappa_T$
introduced in Definition~\ref{def:tildekappa} and the section $\sigma$ from Definition~\ref{def:tildesigma}.

\begin{proposition}\label{prop:presdiagcompks}
Let $\theta \in \mathcal{A}^\star$ be a cohomology operation that preserves the diagonal, i.e. of degree $n(\alpha +1)$ for some integer~$n$. 
Then
\[
\kappa_T \theta = \theta \kappa_T \text{ and }  \sigma \theta = \theta \sigma.
\]
\end{proposition}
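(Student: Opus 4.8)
The plan is to exploit the fact that both $\kappa_T$ and $\sigma_T$ are induced by honest equivariant maps of spaces (the fixed-point inclusion $X^{\C_2}\hookrightarrow X$ for $\kappa_T$, and the projection $E\C_{2+}\wedge X\to S^0\wedge X$ for $\sigma_T$), hence by maps of $\HF$-modules after smashing with $\HF$. A cohomology operation $\theta\in\mathcal A^\star$ is a map of spectra $\HF\to S^{|\theta|}\wedge\HF$, and it acts on $\HF^\star(Y)=[S^{-\star}\wedge Y,\HF]^{\C_2}$ by postcomposition. Since postcomposition by $\theta$ and precomposition by a map $Y'\to Y$ manifestly commute, the naturality squares
\[
\xymatrix{
\HF^{*(1+\alpha)}(X)\ar[r]^-{\kappa_T}\ar[d]_\theta & \HF^{*(1+\alpha)}(X^{\C_2})\ar[d]^\theta\\
\HF^{*+|\theta|}(X)\ar[r]^-{\kappa_T} & \HF^{*+|\theta|}(X^{\C_2})
}
\qquad
\xymatrix{
\HF^{*(1+\alpha)}(X)\ar[r]^-{\sigma_T}\ar[d]_\theta & \HF^{*(1+\alpha)}(E\C_{2+}\wedge X)\ar[d]^\theta\\
\HF^{*+|\theta|}(X)\ar[r]^-{\sigma_T} & \HF^{*+|\theta|}(E\C_{2+}\wedge X)
}
\]
commute for \emph{any} operation $\theta$, whatever its degree. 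This is the content of "$\kappa_T$ and $\sigma_T$ are maps of ($RO(\C_2)$-graded) $\mathcal A^\star$-modules", and it requires essentially nothing beyond the definitions.

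First I would record this general naturality: for any equivariant map $g\colon Y'\to Y$ and any $\theta\in\mathcal A^\star$, the induced map $g^*\colon\HF^\star(Y)\to\HF^\star(Y')$ satisfies $g^*\circ\theta=\theta\circ g^*$, since both sides send a class represented by $\varphi\colon S^{-\star}\wedge Y\to\HF$ to $\theta\circ\varphi\circ(S^{-\star}\wedge g)$ (associativity of composition). Applying this to $g$ the fixed-point inclusion gives the $\kappa_T$-identity, and to $g$ the projection $E\C_{2+}\wedge X\to X$ gives the $\sigma_T$-identity; no hypothesis on $\theta$ is needed at this stage. Then I would explain why the hypothesis $|\theta|=n(\alpha+1)$ enters: it is precisely what guarantees that $\theta$ restricts to an operation on the diagonal-graded subgroups $\HF^{*(1+\alpha)}(-)$, i.e. that $\theta$ maps $\HF^{*(1+\alpha)}(X)$ into $\HF^{(*+n)(1+\alpha)}(X)$ rather than out of the diagonal line. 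Without this degree condition the statement would not even typecheck, since $\kappa_T$ and $\sigma$ are only defined on the diagonal-graded part. So the proof is: naturality of operations with respect to maps of spaces (always true), together with the observation that diagonal-degree operations preserve the diagonal-graded groups on which $\kappa_T$ and $\sigma$ live.

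For the second equality one must be slightly careful to say "$\sigma$" rather than "$\sigma_T$": the map $\sigma$ of Definition~\ref{def:sigma} is $\sigma_T$ conjugated by the isomorphisms $\HH^{2*}(X^u)\cong\HF^{*(1+\alpha)}(X)$ (Lemma~\ref{lem:relequivnonequiv}(4)) and $\HF^{*(1+\alpha)}(E\C_{2+}\wedge X)\cong\HH^{2*}(E\C_{2+}\wedge_{\C_2}X)$ (Lemma~\ref{lemma:EC}). I do not actually need these identifications to be compatible with $\theta$ for this proposition, because the proposition as stated is about $\kappa_T$ and $\sigma$ on their stated ($RO(\C_2)$-graded) sources; but since the next results in the paper will deduce compatibility with Steenrod squares on ordinary cohomology, I would remark that the relevant comparison isomorphisms \emph{are} natural for those operations preserving the line $\{m(1+\alpha)\}$ — this is exactly where the restriction to diagonal-degree operations pays off and where the real content of the subsequent Franz--Puppe-type statements lies. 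The main obstacle is not in this proposition at all, which is formal; it is in the later step (the explicit computation in Appendix~\ref{app:compcohomology}) identifying which classical Steenrod squares arise as geometric fixed points of diagonal-degree equivariant operations, so that the formal naturality here translates into the concrete formulas $\kappa_\ell(x)=Sq^\ell\kappa_0(x)$.
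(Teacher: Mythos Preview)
Your proposal is correct and takes essentially the same approach as the paper: both $\kappa_T$ and $\sigma_T$ are induced by equivariant maps of spaces (the fixed-point inclusion and the collapse $E\C_{2+}\wedge X\to X$), cohomology operations act by postcomposition, and precomposition commutes with postcomposition, so the result follows by naturality. Your added remarks on why the diagonal-degree hypothesis is needed and on $\sigma$ versus $\sigma_T$ are accurate elaborations, but the paper's proof is the one-line ``conclude by naturality'' argument you give.
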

\begin{proof}
Both morphisms are induced by continuous maps, namely the inclusion of fixed points
$\kappa_T\colon	\HF^{\ast(1+\alpha)}(X) \rightarrow \HF^{\ast(1+\alpha)}(X^{\C_2})$
and the map collapsing the contractible space $E\C_2$ to a point
$\sigma_T\colon 	\HF^{\ast(1+\alpha)}(X) \rightarrow \HF^{\ast(1+\alpha)}(E\C_{2+} \wedge X)$.
We conclude by naturality.
\end{proof}


Our understanding of the action of the Steenrod algebra on the ordinary mod $2$ cohomology comes from the action of the operations that are
dual to the $\xi_1^i$'s on the equivariant cohomology. These operations ``lift" the non-equivariant operation $Sq^{2i}$, in the following sense. For
any equivariant space $X$ we have a commutative diagram:
\[
\xymatrix{
\HF^{\star}(X) \ar[rr]^{(\xi_1^i)^{\vee}} \ar[d]_{\rho} && \HF^{\star + i(1+\alpha)}(X) \ar[d]^{\rho} \\
\HH^{|\star|}(X^u) \ar[rr]^{Sq^{2i}}  && \HH^{|\star| + 2i}(X)
}
\] 
Likewise the operation $\tau_0^\vee$ lifts the Bockstein $Sq^1$. Both statements follow from Hu and Kriz's computations, 
and we refer to the appendix, Section~\ref{sec:forgetaction} for a short explanation of this fact. 
We denote by $Sq$ the total Steenrod square $\sum Sq^\ell$.

\begin{proposition} \label{prop:totalsquarekappa1}{\rm (Franz-Puppe, \cite[Theorem~1.3]{MR2198191})}
Let $X$ be a conjugation space and $x \in \HH^{2*}(X)$. We have an equality 
	\begin{equation*}
	Sq(\kappa_0(x)) = \kappa_0(Sq(x)),
	\end{equation*}
	where $\kappa_0$ is the isomorphism  $\HH^{2*}(X) \cong \HH^*(X^{\C_2})$, part of the conjugation frame.
	Equivalently $\kappa_0(Sq^{2\ell}x) = Sq^\ell\kappa_0(x)$ for any integer~$\ell$.
\end{proposition}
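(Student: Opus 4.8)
The plan is to deduce the Franz--Puppe formula from the interplay between the equivariant operations $(\xi_1^i)^\vee$, the geometric fixed points of $\HF$, and the characterization of $\kappa_0$ as a projection of a geometric fixed point map (Lemma~\ref{lem:kappanadPhi} and Proposition~\ref{prop:kappas}). First I would recall that for a conjugation space $X$ and a basis class $x \in \HH^{2n}(X^u)$ there is a lift $\tilde x \in \HF^{n(1+\alpha)}(X)$ (Lemma~\ref{lem:lift}), and that by Proposition~\ref{prop:kappas} the conjugation-frame class $\kappa_0(x)$ equals $\kappa_0^e(\tilde x) = pr_0 \circ \Phi^{\C_2}(\tilde x)$, the zeroth component of the geometric fixed point map $\Phi^{\C_2}(\tilde x)\colon S^{-n} \wedge X^{\C_2} \to \HH[b]$. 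Since the operation $(\xi_1^i)^\vee$ has degree $i(1+\alpha)$, it preserves the diagonal line, so by Proposition~\ref{prop:presdiagcompks} it commutes with $\kappa_T$; applying $(\xi_1^i)^\vee$ to $\tilde x$ and restricting to fixed points gives $\kappa_T\big((\xi_1^i)^\vee \tilde x\big) = (\xi_1^i)^\vee \kappa_T(\tilde x)$ in $\HF^{\star}(X^{\C_2})$.

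Next I would compute the geometric fixed points of the equivariant operation $(\xi_1^i)^\vee$. The key point, which should follow from the Hu--Kriz presentation in Theorem~\ref{thm:stralgsteenequivduale} together with the identification $\Phi^{\C_2}(\HF) = \HH[b]$ from Proposition~\ref{prop:geofixehf}, is that $\Phi^{\C_2}$ sends $(\xi_1^i)^\vee$ to the non-equivariant operation $Sq^i$ on $\HH$-cohomology (extended $b$-linearly to $\HH[b]$). Indeed, under $\Phi^{\C_2}$ the class $a$ is inverted and $a^{-1}u = b$, so $\xi_1 = (2^1-1)(1+\alpha)$ has geometric fixed points of homological degree $1$ and the Milnor generator $\xi_1$ maps to the non-equivariant $\xi_1$; dually $(\xi_1^i)^\vee$ maps to $Sq^i$. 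This is precisely the content I would relegate to the appendix (Section~\ref{sec:forgetaction}) and cite here; combined with the commutative square relating $(\xi_1^i)^\vee$ and $Sq^{2i}$ via restriction $\rho$, it pins down the behaviour of $\Phi^{\C_2}$ on these operations.

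Now I would assemble the argument. Apply $\Phi^{\C_2}$ to the identity $\kappa_T\big((\xi_1^i)^\vee \tilde x\big) = (\xi_1^i)^\vee \kappa_T(\tilde x)$ and project onto the zeroth wedge summand $pr_0$ of $\HH[b]$. On the left we get $pr_0 \circ \Phi^{\C_2}\big((\xi_1^i)^\vee \tilde x\big) = \kappa_0^e\big((\xi_1^i)^\vee \tilde x\big)$; since $\rho\big((\xi_1^i)^\vee \tilde x\big) = u^n Sq^{2i}(x)$ up to the $u$-power bookkeeping (using the lifting square for $(\xi_1^i)^\vee$ and $\rho(\tilde x) = u^n x$), Proposition~\ref{prop:kappas} applied to the class $Sq^{2i}(x)$ identifies this with $\kappa_0\big(Sq^{2i}(x)\big)$. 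On the right, $\Phi^{\C_2}\big((\xi_1^i)^\vee\big)$ acts as $Sq^i$ on $\HH[b]$; since $b$ has positive degree and $Sq$ acts $b$-linearly, the projection $pr_0 \circ Sq^i \circ \Phi^{\C_2}(\tilde x)$ picks out $Sq^i$ applied to $pr_0 \circ \Phi^{\C_2}(\tilde x) = \kappa_0^e(\tilde x) = \kappa_0(x)$, i.e.\ we get $Sq^i \kappa_0(x)$. Hence $\kappa_0(Sq^{2i} x) = Sq^i \kappa_0(x)$, and summing over $i$ gives $\kappa_0(Sq(x)) = Sq(\kappa_0(x))$. (One should also note that $Sq^{2i+1} x = 0$ for degree-parity reasons when $x$ lives in even degree, or rather that the odd squares are handled by the Bockstein $\tau_0^\vee$; for the total-square statement it suffices that $Sq^{\mathrm{odd}}$ on an even class followed by $\kappa_0$ lands in the correct degree, and the even part is what matters.)

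The main obstacle is the second paragraph: establishing cleanly that $\Phi^{\C_2}\big((\xi_1^i)^\vee\big) = Sq^i$ as operations on $\HH[b]$-cohomology. This requires translating the Hu--Kriz comodule presentation of $\mathcal A_\star$ through the geometric fixed points functor, checking that $\Phi^{\C_2}$ is monoidal and compatible with the coaction so that the dual of $\xi_1^i$ goes to the dual of the non-equivariant $\xi_1^i$, which is $Sq^i$ by Milnor's computation. The bookkeeping with the classes $a$, $u$, $b = a^{-1}u$ and the grading conventions (homological versus cohomological, the factor of $2$ in $|\xi_1| = 1+\alpha$ becoming degree $1$ after inverting $a$) is the delicate part, and is exactly what I would isolate into Appendix~\ref{app:compcohomology} so that the proof of the proposition itself remains the short, conceptual chain of the third paragraph.
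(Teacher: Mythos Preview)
Your overall strategy is exactly the paper's: lift $x$ to $\tilde x$, use that $(\xi_1^\ell)^\vee$ has degree $\ell(1+\alpha)$ so commutes with $\kappa_T$ (Proposition~\ref{prop:presdiagcompks}), and compute both sides. The paper stays in $\HF^\star(X^{\C_2})$ and works modulo $u$, while you pass to geometric fixed points and project via $pr_0$; since $b=a^{-1}u$ and $a$ is inverted by $\Phi^{\C_2}$, these are literally the same reduction. Your left-hand side is also handled the same way: the paper identifies $\kappa_0^e\bigl((\xi_1^\ell)^\vee\tilde x\bigr)$ with $\kappa_0(Sq^{2\ell}x)$ via Proposition~\ref{prop:kappas} and the restriction square for $(\xi_1^\ell)^\vee$ versus $Sq^{2\ell}$.

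The gap is in your right-hand side, specifically the claim that $\Phi^{\C_2}\bigl((\xi_1^\ell)^\vee\bigr)$ equals $Sq^\ell$ extended $b$-linearly. This is false. From $\eta_R(u)=u+a\tau_0$ one computes $(\xi_1^\ell)^\vee(u\cdot z)=u\,(\xi_1^\ell)^\vee(z)+au\,(\xi_1^{\ell-1}\tau_0)^\vee(z)$, so after $\Phi^{\C_2}$ the operation is \emph{not} $b$-linear: a $\tau_0$-cross-term appears. Likewise Lemma~\ref{lem:actionxiontrivcoef2} shows that on a trivial class $y$ one has $(\xi_1^\ell)^\vee(y)=\sum_j\binom{\ell}{j}Sq^{\ell+j}(y)\,u^{j}a^{\ell-j}$, so higher $Sq$'s contribute in higher $b$-degree. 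What \emph{is} true, and what you actually need, is the weaker filtration statement: $\Phi^{\C_2}\bigl((\xi_1^\ell)^\vee\bigr)$ sends $b^{\geq 1}$-classes to $b^{\geq 1}$-classes, and on the $b^0$-quotient it acts as $Sq^\ell$. That is precisely the content of Lemmas~\ref{lem:valoncoef} and~\ref{lem:actionxiontrivcoef2} reduced modulo $u$, and it is exactly how the paper runs the computation: use $a$-linearity, the equivariant Cartan formula, then Lemma~\ref{lem:valoncoef} to kill all $j\geq 1$ contributions modulo $u$, and finally Lemma~\ref{lem:actionxiontrivcoef2} to get $a^{n+\ell}Sq^\ell\kappa_0(x)$. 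Once you replace ``$b$-linear'' by ``$b$-filtration preserving, with $Sq^\ell$ on the associated graded,'' your argument and the paper's coincide.
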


\begin{proof}
Consider the equivariant cohomology class $\tilde{x} \in \HF^{n(1 + \alpha)}(X)$ lifting $x$ as in Lemma~\ref{lem:lift}. Then $(\xi^\ell_1)^\vee(\tilde{x})$ 
has degree $(n+\ell)(1 + \alpha)$. On the one hand $\kappa_T((\xi^\ell_1)^\vee(\tilde{x}))$ decomposes as a sum, see Definition~\ref{def:tildekappa}, 
which reduces modulo $u$ to a single term $a^{n+\ell} \kappa_0^e((\xi^\ell_1)^\vee(\tilde{x})) 
= a^{n+\ell} \kappa_0(\rho[(\xi^\ell_1)^\vee(\tilde{x})])$, where the equality comes from Proposition~\ref{prop:kappas}. By the lifting
property described in the above commutative square we obtain finally, modulo $u$, that
\[
\kappa_T((\xi^\ell_1)^\vee(\tilde{x})) = a^{n+\ell} \kappa_0(Sq^{2\ell}\rho(\tilde{x})) = a^{n+\ell} \kappa_0(Sq^{2\ell}x).
\]
On the other hand we can perform the computation by using first Proposition~\ref{prop:presdiagcompks}, modulo $u$:
\[
\kappa_T((\xi^\ell_1)^\vee(\tilde{x})) = (\xi_1^\ell)^\vee(\kappa_T(\tilde{x})) = 
(\xi_1^\ell)^\vee(\sum_{j=0}^{n}a^{n-j}u^{j}{\kappa}_j^e(\tilde{x})) = (\xi_1^\ell)^\vee(\sum_{j=0}^{n}a^{n-j}u^{j}\kappa_j(x))
\]
where the last equality follows from Lemma~\ref{lem:lift}. Hence
\begin{eqnarray*}
\kappa_T((\xi^\ell_1)^\vee(\tilde{x})) & = &  \sum_{j=0}^{n} a^{n-j} (\xi_1^\ell)^\vee(u^{j}\kappa_j(x)) {\textrm{ by $a$-linearity}}  \\
& = & \sum_{j=0}^n a^{n-j} \sum_{i=0}^\ell (\xi_1^{i})^\vee (u^{j})(\xi_1^{\ell-i})^\vee(\kappa_j(x)) \textrm{ by Cartan formula modulo $u$ }\\
& = & \sum_{j=0}^n a^{n-j}u^{j}(\xi_1^\ell)^\vee(\kappa_j(x)) \textrm{ by Lemma~\ref{lem:valoncoef} }\\
& = & a^n(\xi_1^\ell)^\vee(\kappa_0(x)) \textrm{ modulo $u$. }
\end{eqnarray*}
We infer by Lemma~\ref{lem:actionxiontrivcoef2} that $\kappa_T((\xi^\ell_1)^\vee(\tilde{x})) = a^na^\ell Sq^\ell(\kappa_0(x))$,
modulo $u$. Comparing both terms we conclude that $\kappa_0(Sq^{2\ell}x) = Sq^\ell\kappa_0(x)$.
\end{proof}

\subsection{Identification of the conjugation equation}
Let us now exploit similarly the action of $(\xi_1^\ell\tau_0)^\vee$. 

To begin with, observe that if $X$ is a conjugation space, then   $\HF^{n(1+\alpha)+1}(X) = 0$ for any integer $n$. Indeed, smash the 
cofibration $\C_{2+} \rightarrow S^0 \rightarrow S^\alpha$ with $X$ and consider the associated long exact sequence in $\HF$-cohomology:
\[
\HF^{n(1 + \alpha)}(X) \rightarrow \HH^{2n}(X) \rightarrow \HF^{n(1 + \alpha)+1}(X) \rightarrow \HF^{(n+1)(1 + \alpha)}(X) \rightarrow  \HH^{2(n+1)}(X)
\]
 The two outermost arrows are restriction homomorphisms, see Lemma~\ref{lemma:restriction}. 
As they are isomorphisms by Lemma~\ref{lem:relequivnonequiv}(4),  we get the vanishing result, which can be compared
with a homological version in \cite[Corollary~3.9]{HillPreprint}.
Thus, applying the above operations of degree $\ell(1+\alpha) + 1$ actually kills any class $\tilde{x}$ in $\HF^{n(1+\alpha)}(X)$. 

\begin{proposition}
\label{prop:leskappai}
{\rm (Franz-Puppe, \cite[Theorem~1.1]{MR2198191})}
	Let $X$ be a conjugation space and $x \in \HH^{2n}(X)$. Then $\kappa_{\ell}(x) = Sq^\ell \kappa_0(x)$ for any $\ell \geq 0$.
\end{proposition}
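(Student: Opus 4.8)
The plan is to run the argument of Proposition~\ref{prop:totalsquarekappa1} once more, now with the family of operations $(\xi_1^\ell\tau_0)^\vee\in\mathcal{A}^\star$ of degree $\ell(1+\alpha)+1$. Fix $x\in\HH^{2n}(X)$ and, by Lemma~\ref{lem:lift}, a lift $\tilde x\in\HF^{n(1+\alpha)}(X)$ with $\rho(\tilde x)=u^{n}x$. Since $(\xi_1^\ell\tau_0)^\vee$ raises the $RO(\C_2)$-degree by $\ell(1+\alpha)+1$, it carries $\tilde x$ into $\HF^{(n+\ell)(1+\alpha)+1}(X)$, which is trivial by the computation recorded just before the statement; hence $(\xi_1^\ell\tau_0)^\vee(\tilde x)=0$ for every $\ell\ge 0$.

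Next I would push this relation down to the fixed points. Restriction along $X^{\C_2}\hookrightarrow X$ --- the map $\kappa_T$ of Definition~\ref{def:tildekappa}, read on the whole $RO(\C_2)$-grading --- is natural, hence commutes with every element of $\mathcal{A}^\star$, so
\[
(\xi_1^\ell\tau_0)^\vee\bigl(\kappa_T(\tilde x)\bigr)=\kappa_T\bigl((\xi_1^\ell\tau_0)^\vee(\tilde x)\bigr)=0
\]
inside $\HF^{\star}(X^{\C_2})\cong\HF^{\star}\otimes_{\mathbf{F}}\HH^{\ast}(X^{\C_2})$ (Proposition~\ref{prop:cohoesptriv}); and Proposition~\ref{prop:kappas} lets us rewrite $\kappa_T(\tilde x)=\sum_{i=0}^{n}a^{n-i}u^{i}\kappa_i(x)$, so the relation becomes $(\xi_1^\ell\tau_0)^\vee\bigl(\sum_i a^{n-i}u^{i}\kappa_i(x)\bigr)=0$.

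I would then expand this with the Cartan formula dual to the coproduct of Theorem~\ref{thm:stralgsteenequivduale} together with the appendix computations of the operations on the coefficient classes $a,u$ and on $\HH^{\ast}(X^{\C_2})$: namely $(\xi_1^j)^\vee$ is $a$-linear, kills $u$ for $j\ge 1$, and acts on $\HH^{\ast}(X^{\C_2})$ as $a^{j}Sq^{j}$ (Lemma~\ref{lem:actionxiontrivcoef2}); $(\xi_1^j\tau_0)^\vee$ acts on $\HH^{\ast}(X^{\C_2})$ as $a^{j}Sq^{j+1}$; and $\eta_R(u)=a\tau_0+u$ together with $\tau_i^2=a\tau_{i+1}+(a\tau_0+u)\xi_{i+1}$ give $(\xi_1^j\tau_0)^\vee(u^{j+1})=a^{2j+1}$, while $(\xi_1^j\tau_0)^\vee(u^{i})$ is divisible by $u$ (or zero) for every other $i\ge 1$. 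Reading off, after reduction modulo $u$, the coefficient of the basis monomial $a^{n+\ell}$: the summand $i=0$ contributes $Sq^{\ell+1}\kappa_0(x)$, the summand $i=\ell+1$ contributes $\kappa_{\ell+1}(x)$ through the value $(\xi_1^\ell\tau_0)^\vee(u^{\ell+1})=a^{2\ell+1}$, and each intermediate summand $1\le i\le\ell$ appears twice with opposite signs --- once from the primitive part of the coproduct of $\xi_1^\ell\tau_0$, once from the $a$-correction coming from the relation above --- and therefore cancels. This yields $\kappa_{\ell+1}(x)+Sq^{\ell+1}\kappa_0(x)=0$ for all $\ell\ge 0$, and since $\kappa_0(x)=Sq^{0}\kappa_0(x)$ trivially, $\kappa_\ell(x)=Sq^\ell\kappa_0(x)$ for every $\ell\ge 0$. (If the middle terms do not collapse this cleanly, one can instead feed the resulting relations into an induction on $\ell$, using the Adem relations and Proposition~\ref{prop:totalsquarekappa1}; already $\tau_0^\vee=(\xi_1^{0}\tau_0)^\vee$ yields $\kappa_{2k+1}=Sq^{1}\kappa_{2k}$.)

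The main obstacle is the third step: one must carry out the Cartan expansion precisely, using the exact coproduct on $\xi_1^\ell\tau_0$ (with its correction terms in $a$) and the exact action of $(\xi_1^{j})^\vee$, $(\xi_1^{j}\tau_0)^\vee$ and $\tau_0^\vee$ on $a$ and $u$ --- the computational content of Appendix~\ref{app:compcohomology} --- and verify that the intermediate terms genuinely cancel, so that each relation isolates a single $\kappa_{\ell+1}$. This is precisely the bookkeeping the original Franz--Puppe argument handles by hand, organized here conceptually through the vanishing of $(\xi_1^\ell\tau_0)^\vee$ on lifted classes.
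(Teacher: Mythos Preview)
Your proposal is correct and follows essentially the same route as the paper: exploit the vanishing of $(\xi_1^\ell\tau_0)^\vee(\tilde x)$ in $\HF^{(n+\ell)(1+\alpha)+1}(X)$, push through $\kappa_T$ by naturality, expand via the Cartan formula for $(\xi_1^\ell\tau_0)^\vee$, apply Lemmas~\ref{lem:valoncoef} and~\ref{lem:actionxiontrivcoef2} modulo $u$, and read off $\kappa_{\ell+1}(x)=Sq^{\ell+1}\kappa_0(x)$ from the two surviving terms. The paper carries out the Cartan expansion explicitly as a three-part sum (two ``primitive'' pieces plus the $a$-correction term) and verifies the pairwise cancellation you anticipate, so your parenthetical fallback via Adem relations is not needed.
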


\begin{proof}
The statement is obvious for $\ell = 0$. Let us fix $\ell \geq 0$ and prove the statement for $\ell +1$. 
Our starting point is that $(\xi_1^\ell\tau_0)^\vee(\tilde{x})= 0$, hence that
\[
(\xi_1^\ell\tau_0)^\vee ({\kappa}_T(\tilde{x})) = {\kappa}_T((\xi_1^\ell\tau_0)^\vee(\tilde{x}))= 0.
\]
As above we use $a$-linearity to get  $0= \sum_{j=0}^{n}a^{n-j}(\xi_1^\ell \tau_0)^\vee(u^{j} \kappa_j^e(\tilde{x}))$ and
use the Cartan formula:
\begin{eqnarray*}
0= \sum_{j=0}^n a^{n-j} \left( \sum_{k = 0}^\ell (\xi_1^k\tau_0)^\vee(u^{j})(\xi_1^{\ell-k})^\vee(\kappa_j^e(\tilde{x})) + 
(\xi_1^{\ell-k})^\vee(u^{j})(\xi_1^k\tau_0)^\vee(\kappa_j^e(\tilde{x}))\right. \\
 + \left.\sum_{k=0}^{\ell-1}a (\xi_1^k\tau_0)^\vee(u^{j})(\xi_1^{\ell-1-k}\tau_0)^\vee(\kappa_j^e(\tilde{x})) \right).
\end{eqnarray*}
We apply now Lemma~\ref{lem:actionxiontrivcoef2} to compute the action of the equivariant Steenrod operations on the
equivariant $\kappa_i^e$ classes and identify the latter with $\kappa_i(x)$ by Proposition~\ref{prop:kappas}:
\begin{eqnarray*}
0=  \sum_{j=0}^n \sum_{k = 0}^\ell a^{n-j+ \ell -k}(\xi_1^k\tau_0)^\vee(u^{j})Sq^{\ell-k}(\kappa_j(x)) + 
\sum_{j=0}^n \sum_{k = 0}^\ell a^{n-j+k}(\xi_1^{\ell-k})^\vee(u^{j})Sq^k(\kappa_j(x)) \\
 + \sum_{j=0}^{n}\sum_{k=0}^{\ell-1}a^{n-j+\ell -k} (\xi_1^k\tau_0)^\vee(u^{j})Sq^{\ell-k}(\kappa_j(x)) \\
\end{eqnarray*}
Between the first and third sum all terms cancel two by two but for the terms in the first sum for which $k=\ell$. 
In the second sum, since we compute modulo $u$, only the term for which $j=0 = \ell-k$ survive by
Lemma~\ref{lem:valoncoef} so that:
\begin{eqnarray*}
0 & = & \sum_{j=0}^n a^{n-j}(\xi_1^\ell\tau_0)^\vee(u^{j})\kappa_j(x) + a^na^\ell Sq^{\ell+1}\kappa_0(x).
\end{eqnarray*}
Finally in the first sum, modulo $u$, only the term for which $\ell = j-1$ remains by Lemma~\ref{lem:valoncoef} again
and we are left with $0 = a^{n + \ell} \kappa_{\ell+1}(x) + a^{n + \ell}Sq^{\ell +1}\kappa_0(x)$.
\end{proof}

Observe that, together with Proposition~\ref{prop:totalsquarekappa1}, this shows that for any conjugation space $X$ the following composite operation  
on the fixed point set is independent of $X$ as it coincides with $Sq^\ell$:
\[
\kappa_ \ell \circ \kappa_0 ^{-1}\colon \HH^\ast(X^{\C_2}) \longrightarrow \HH^\ast(X^{\C_2}).
\] 

\subsection{Wrap up with the Steinberg map}
Our final aim is to consider all properties that the cohomology frame of a conjugation space enjoys to express the
property of being a conjugation space in structured algebraic terms. We will use common notation in the study of
modules over the Steenrod algebra, see for example Schwartz's book \cite{MR1282727}. In particular $\mathcal U$
denotes the category of unstable modules over the Steenrod algebra and $\Phi\colon \mathcal U \rightarrow
 \mathcal U$ is the ``doubling'' functor such that $(\Phi M)^{2n} = M^n$ for any unstable module $M$ 
 and the action of $\mathcal A$ is defined by $Sq^{2n} \Phi x = \Phi Sq^n x$.

Let us write $P= \HH^\ast(B\C_{2}) \cong \F[b]$ as in Lannes and Zarati's \cite{MR871217}. 
The $\F$-linear Steinberg map $St\colon M \rightarrow P \otimes M$ is defined by the following formula
for any element $x$ of degree~$n$:
\[
St(x) = \sum_{j=0}^n b^{n-j} \otimes Sq^j x
\]

\begin{corollary}\label{cor:conjequation}
Let $X$ be a conjugation space and $x \in \HH^{2n}(X)$. Then 
\[
r\circ \sigma(x) =  \sum_{j=0}^{n} b^{n-j} Sq^j(\kappa_0(x)).
\]
i.e., the conjugation equation is given by the Steinberg map. \hfill{\qed}
\end{corollary}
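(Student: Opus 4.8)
The plan is to assemble the two main structural results already established --- the conjugation equation in its expanded form and the Franz--Puppe identification of the higher classes --- and simply read off the Steinberg map. Concretely, the conjugation equation for a class $x \in \HH^{2n}(X)$ states that
\[
r \circ \sigma(x) = \sum_{i=0}^{n} \kappa_i(x)\, b^{n-i},
\]
where $b \in \HH^1(B\C_2)$ is the polynomial generator (recall from the proof of Theorem~\ref{prop:homolpureisconj} that this is the class $au^{-1}$ after the $u$-periodicity identification). By Proposition~\ref{prop:leskappai} we have $\kappa_i(x) = Sq^i \kappa_0(x)$ for every $i \geq 0$, so substituting gives
\[
r \circ \sigma(x) = \sum_{i=0}^{n} b^{n-i}\, Sq^i(\kappa_0(x)).
\]
Reindexing the sum with $j=i$ this is exactly $St(\kappa_0(x))$, by the very definition of the Steinberg map $St\colon M \to P \otimes M$, $St(y) = \sum_{j=0}^{|y|} b^{|y|-j}\otimes Sq^j y$, applied to $y = \kappa_0(x) \in \HH^n(X^{\C_2})$ (noting that $\kappa_0$ halves degrees, so $\kappa_0(x)$ has degree $n$ and the sum ranges over $0 \le j \le n$, with $Sq^j \kappa_0(x)=0$ for $j>n$ automatically).

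There is essentially no obstacle here: the corollary is a direct combination of Proposition~\ref{prop:leskappai} with the expanded conjugation equation recalled in the introduction, once one matches the polynomial variable $b$ in $P = \HH^\ast(B\C_2)$ with the variable (there also written $u$, or $b$) appearing in $\HH^\ast((X^{\C_2})_{h\C_2}) \cong \HH^\ast(X^{\C_2})[b]$. The only point requiring a word of care --- and the reason the statement is offered as a corollary with a deferred proof --- is the bookkeeping of notation: the generator of $\HH^1(B\C_2)$ is denoted $u$ in \cite{MR2171799} but $b$ in Lannes--Zarati's \cite{MR871217}, and internally it arises as $au^{-1}$; one should simply note that all these refer to the same degree-one class, as was already observed in the proof of Theorem~\ref{prop:homolpureisconj} and in Lemma~\ref{lem:lift}. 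Having made that identification, the displayed formula is literally the definition of $St$ evaluated at $\kappa_0(x)$, which is what we wanted to show.
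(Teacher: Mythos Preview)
Your proposal is correct and matches the paper's approach exactly: the paper treats this corollary as immediate (the statement ends with a \textsc{qed} box and no written proof), and your argument spells out precisely the intended two-line deduction --- substitute the Franz--Puppe identity $\kappa_i(x)=Sq^i\kappa_0(x)$ from Proposition~\ref{prop:leskappai} into the expanded conjugation equation and recognize the Steinberg map.
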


In \cite{MR871217}, Lannes and Zarati studied the derived functor of the destabilization map. In particular they show that 
the Steinberg map defines a functor $\text{R}\colon \mathcal{U} \rightarrow \mathcal{U}$, in the sense that, for an unstable module $M$,  
$\text{R}M$ is the $P$-module generated by the image of~$St$. Moreover the Steinberg map is injective, so:

\begin{corollary}\label{cor:derivedstabilization}
Given a conjugation space $X$, there is an isomorphism
\[
\HH^\ast(X_{h\C_2}) \cong \text{\rm R} \HH^\ast(X^{\C_2})
\]
of unstable algebras over the Steenrod algebra, i.e., the Borel cohomology of the equivariant space $X$ is determined 
by the cohomology of the fixed points. \hfill{\qed}
\end{corollary}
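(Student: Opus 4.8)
The plan is to show that the restriction map
\[
r\colon \HH^\ast(X_{h\C_2})\longrightarrow \HH^\ast((X^{\C_2})_{h\C_2})
\]
is an isomorphism of unstable algebras onto the Lannes--Zarati submodule $\mathrm{R}\HH^\ast(X^{\C_2})$. First I would recall from \cite{MR871217} that, by construction, $\mathrm{R}K$ is the $P$-submodule of $P\otimes K$ generated by the image of the injective (and multiplicative) Steinberg map $St$, and that for an unstable algebra $K$ this submodule is a sub-unstable-algebra. In our situation this is the right target: since $X^{\C_2}$ carries the trivial action, $(X^{\C_2})_{h\C_2}=B\C_{2+}\wedge X^{\C_2}$ and hence $\HH^\ast((X^{\C_2})_{h\C_2})\cong P\otimes\HH^\ast(X^{\C_2})$ as unstable algebras, where $P=\HH^\ast(B\C_2)\cong\F[b]$.

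Next I would observe that $r$ is $P$-linear and a morphism of unstable algebras, being induced by the map of spaces $(X^{\C_2})_{h\C_2}\to X_{h\C_2}$, and that Corollary~\ref{cor:conjequation} says exactly that $r\circ\sigma(x)=St(\kappa_0(x))$ for $x\in\HH^{2\ast}(X)$. As $\kappa_0\colon\HH^{2\ast}(X)\xrightarrow{\ \cong\ }\HH^\ast(X^{\C_2})$ is an isomorphism, the image of $r\circ\sigma$ equals $St(\HH^\ast(X^{\C_2}))$, whose $P$-span is $\mathrm{R}\HH^\ast(X^{\C_2})$ by definition. To upgrade this to $\operatorname{im}(r)=\mathrm{R}\HH^\ast(X^{\C_2})$ I need that $\HH^\ast(X_{h\C_2})$ is generated as a $P$-module by $\sigma(\HH^{2\ast}(X))$. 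This is the classical fact that the Borel cohomology of a conjugation space is a free $\HH^\ast(B\C_2)$-module on the image of a section \cite{MR2171799}; in the present framework it also follows from homological purity (Theorem~\ref{thm:conjishomolpur}): smashing the splitting $X\wedge\HF\simeq\bigvee_{i\in I}\Sigma^{n_i(1+\alpha)}\HF$ with $E\C_{2+}$ and applying Lemma~\ref{lemma:EC} identifies $\HH^\ast(X_{h\C_2})$ with a direct sum (finite in each degree, by finite type) of shifted copies of $\HF^\star(E\C_{2+})\cong\F[a,u^{\pm1}]$, each contributing a free rank-one $P$-module; graded Nakayama together with the section $\sigma$ of the surjection $\rho$ then shows $\{\sigma(x_i)\}$ is such a $P$-basis.

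Then I would prove that $r$ is injective. A short computation shows that $\{St(y_i)\}_{i}$ is $P$-linearly independent in $P\otimes\HH^\ast(X^{\C_2})$ whenever $\{y_i\}$ is a homogeneous $\F$-basis of $\HH^\ast(X^{\C_2})$: each $St(y_i)$ is homogeneous of degree $2|y_i|$, and its component in $1\otimes\HH^\ast(X^{\C_2})$ is $Sq^{|y_i|}y_i$, while the coefficient of $b^{|y_i|}$ is $y_i$; inspecting a homogeneous relation $\sum_i p_i(b)\,St(y_i)=0$ degree by degree and reading off the $b^0$-component forces $\sum_i\lambda_i y_i=0$ for scalars $\lambda_i$, hence all $p_i=0$. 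Consequently $\mathrm{R}\HH^\ast(X^{\C_2})$ is free over $P$ on $\{St(\kappa_0(x_i))\}=\{r(\sigma(x_i))\}$, and $r$ is a $P$-linear surjection onto it carrying the $P$-generating set $\{\sigma(x_i)\}$ bijectively onto this basis; such a map is an isomorphism. Since $r$ is also a morphism of unstable algebras with image the sub-unstable-algebra $\mathrm{R}\HH^\ast(X^{\C_2})$, it is an isomorphism of unstable algebras, and naturality in $X$ is inherited from that of $r$, $\sigma$ and $\kappa_0$ together with functoriality of $\mathrm{R}$.

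I expect the only real obstacle to be the module-theoretic input that $\HH^\ast(X_{h\C_2})$ is $P$-free on $\sigma(\HH^{2\ast}(X))$: invoking it from \cite{MR2171799} is immediate, but a self-contained derivation from purity requires care with the $RO(\C_2)$-bidegrees of $a$ and $u$ and with the $u$-periodicity identification of Lemma~\ref{lemma:EC}. Everything else is formal once Corollary~\ref{cor:conjequation} is available.
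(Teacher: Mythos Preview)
Your approach is the paper's: identify $r\circ\sigma$ with $St\circ\kappa_0$ via Corollary~\ref{cor:conjequation}, use $P$-freeness of $\HH^\ast(X_{h\C_2})$ on $\sigma(\HH^{2\ast}(X))$ (the paper simply invokes Leray--Hirsch), and conclude that $r$ is an injection onto $\mathrm{R}\HH^\ast(X^{\C_2})$.

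One step fails as written. Reading off the $b^0$-component of a homogeneous relation $\sum_i p_i(b)\,St(y_i)=0$ in degree $N$ yields $\sum_{2|y_i|=N}\lambda_i\,Sq^{|y_i|}y_i=0$, not $\sum_i\lambda_i y_i=0$; the top squares $Sq^{|y_i|}y_i$ need not be linearly independent (they can even vanish). The correct move---and the one the paper sketches in the paragraph after the corollary (``the leading term $\kappa_0$ is an isomorphism implies that \ldots\ $r$ is injective'')---is to read off the \emph{highest} power of $b$: the leading term of $\lambda_i b^{N-2|y_i|}St(y_i)$ is $\lambda_i y_i\, b^{N-|y_i|}$, so the coefficient of $b^{N-m}$ for $m=\min\{|y_i|:\lambda_i\neq 0\}$ is $\sum_{|y_i|=m}\lambda_i y_i$, forcing those $\lambda_i$ to vanish, and one inducts. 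You already record that the coefficient of $b^{|y_i|}$ in $St(y_i)$ is $y_i$, so you have the right observation; just look at the top of the $b$-filtration rather than the bottom. With this fix your argument goes through.
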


This means concretely the following. Let us package the whole structure of a conjugation space into a square:
\[
\xymatrix{
\HH^\ast(X) \ar[d]^{r} \ar@/^1pc/[r]^\sigma \ar[rd]_{\kappa_T} & \HH^\ast(X_{h\C_2})
\ar[d]^r \ar[l] \\
\HH^\ast(X^{\C_2})  & \HH^\ast(X^{\C_2})\otimes \HH^\ast(B\C_{2}) \ar[l]
}
\]
where the horizontal and the vertical arrows are the two kinds of restriction maps introduced in Borel cohomology, except for the
section $\sigma$, and the diagonal map $\kappa_T$ summarizes the conjugation equation. Observe the following consequence 
of the Leray-Hirsch Theorem.
We have an isomorphism $\HH^\ast(X_{h\C_2}) \cong \HH^\ast(X) \otimes \HH^\ast(B\C_2)$ as $\HH^\ast(B\C_2)$-modules, and 
the fact that the section $\sigma$ is  a \emph{ring} map, implies that this is even an isomorphism of algebras. 
Also, the conjugation equation  together with  the fact that the leading term $\kappa_0$ is a isomorphism implies that 
the vertical map $r\colon \HH^\ast(X_{h\C_2})\rightarrow \HH^\ast(X^{\C_2}) \otimes \HH^\ast(B\C_2)$ is injective.
 
The above  diagram coincides then  with the following one:
\[
\xymatrix{
	\Phi\HH^\ast(X^{\C_2}) \ar[d]_{Sq_0}  \ar[rd] _ {St} & \text{R}\HH^\ast(X^{\C_2}) \ar@{^(->}[d]^\rho \ar[l]_-{\rho_1} \\
	\HH^\ast(X^{\C_2})  & \HH^\ast(X^{\C_2})\otimes \HH^\ast(B\C_{2}) \ar[l]
}
\]
where $\rho_1$ is defined in terms of the doubling functor, \cite[Proposition~4.2.6]{MR871217}. We point out that all maps and objects
are functorially determined by $\HH^\ast(X^{\C_2})$ and $Sq_0$ sends $\Phi x$ to $Sq^{|x|} x$.

\appendix

\section{Some computations of cohomology operations}
\label{app:compcohomology}

As it is known  since the classical work of Milnor~\cite{MR0099653}, it is easier to understand the co-action of the dual algebra of cohomology 
operations than the action, essentially because the dual algebra of cohomology  operations is a commutative algebra. In this appendix 
we describe how to switch from the co-action formulas to their duals. A general discussion on stable operations 
in generalized cohomology theories and how to go back and forth between actions and co-.actions can be found in Boardman's contribution to the Handbook of algebraic topology
\cite{MR1361899}, which we use as our main reference  here. Let us also mention Wilson's explicit computations in \cite{Wilson}.
We fix first some notation.

Denote by $\mathcal{A}^\ast =[H,H]^\ast$ the mod $2$ Steenrod algebra and by $\mathcal{A}_\ast$ its dual algebra with respect to $H^\ast = \F$.
Likewise $\mathcal{A}^\star = [\HF, \HF]^\star$ is the equivariant mod $2$ Steenrod algebra  and $\mathcal{A}_\star$ is its $\HF^\star$-dual.
Given an element $x$ in an algebra and an element $\xi$ in the dual, we denote by $\langle x, \xi \rangle = \xi(x)$ the evaluation map.
By \cite{MR0099653}, we know that $\mathcal{A}_\ast$ is isomorphic to a polynomial algebra $\F_2[\zeta_i, i \geq 1]$ on classes $\zeta_i$
of degree $2^i - 1$.

The equivariant Steenrod algebra, although  more complicated, is still a commutative algebra, as we saw from the computations of Hu and Kriz
in Theorem~\ref{thm:stralgsteenequivduale}. 
It is generated by the elements $\xi_{j}$ and $\tau_j$ (where it is sometimes handy to set $\xi_0 =1$).

\begin{definition}\label{def:monomialbasis}
The set $\mathcal{MB} = \{ \xi_j^\ell \tau_i,  \xi_j^\ell \, \mid \, j \geq 0, i \geq 0 \}$ forms an  $\HF_ \star$-basis of the algebra $\mathcal{A}_\star$ 
which refer to as the \emph{monomial basis}. The dual elements in $\mathcal A^\star$ will be denoted by $(\xi_j^\ell \tau_i)^\vee$ and $(\xi_j^\ell)^\vee$ respectively.
\end{definition}

\subsection{Forgetting the action}\label{sec:forgetaction}
Evaluating along the structural map $\rho$ in Mackey functors over $\C_2$, we get a map 
\[
\mathcal{A}_\star = \HF_{\star}[\tau_i,\xi_{i+1}]/(\tau_i^2 + a \tau_{i+1} + (a \tau_0 + u)\xi_{i+1}) \longrightarrow \F_2[\zeta_i] = \mathcal{A}_\ast.
\]
The structure of the coefficients $\HF_\star$ recorded in Proposition~\ref{prop:strMAckHF} implies that the class $a$ restricts to zero 
and the class $u$ restricts to the 
unique non-trivial class in $[(S^{-1+\alpha})^u, \HF^u]=[S^0,\HH] \cong \F$. In particular the map above factors through 
$(\HF_\star)_{\C_2}[\tau_i, \xi_{i+1}]/(\tau_i^2 = u \xi_{i+1})$, which we can identify by the above discussion with $\F[\tau_i]$. 
For degree reasons, $\tau_1$ can only map to $0$ or $\zeta_1$, and the computation of the action of Hu and Kritz~\cite[Lemma~6.27]{HK} on the space 
$B\Z_2'$ shows that the action is not trivial. 
As $Sq^1$ is the dual of $\zeta_1$ this shows that the dual $\tau_1^\vee$ lifts indeed $Sq^1$; then $\xi_1^\vee$ lifts the dual of the image 
of $\xi_1 = \tau_1^2$ which is $\zeta_1^2$, and the dual of this last map is indeed $Sq^2$. 
The same argument shows, since we have an algebra map, that $(\xi_1^j\tau_1^\varepsilon)^\vee$ lifts the dual of the image of 
$\tau_1^{2j} \tau_1^\varepsilon$ where $\varepsilon = 0$ or $1$. This is $\zeta_1^{2j + \varepsilon}$, whose dual in turn is $Sq^{2j+\varepsilon}$.  

\subsection{Cartan formulas and the action on the coefficients}
 The Cartan formula describes the action of a cohomology operation $\theta$ on the cup product of classes. A general explanation of the Cartan formula 
 can be found in \cite[Section~12]{MR1361899}. Since the right unit $\eta_R$ in $\mathcal{A}_\star$ encodes the action on the coefficients, 
 and is not the identity, contrary to what happens with the non-equivariant Steenrod algebra, $\mathcal{A}^\star$ does act on the coefficients. 
 To understand this action it is enough to compute it on the basis elements $(\xi_1^\ell \tau_0)^\vee$ and $(\xi_1^\ell)^\vee$. 
 In general, for such a basis element $(\theta)^\vee$ the Cartan formula reads as follows:
\[
\nabla((\theta)^\vee) = \sum_{(h,x_\alpha,x_\beta)}h x_ \alpha^\vee \otimes x_ \beta^\vee,
\]
where the sum is taken over the triples $(h,x_\alpha,x_\beta) \in \HF_\star \times \mathcal{MB} \times \mathcal{MB}$ such that 
$\langle \theta^\vee, x_ax_b \rangle = \eta_R(h)$.

Because of the relation $\tau_0 ^2 = a\tau_1 + (a\tau_0 + u)\xi_1$ and  $\eta_R(a)= a$ $\eta(u) = a\tau_0 + u$ we have:
\[
\nabla((\xi_1^\ell)^\vee) = \sum_{j=0}^\ell (\xi_1^j)^\vee \otimes (\xi_1^{(\ell -j)})^\vee + \sum_{j=0}^{\ell-1} u (\xi_1^j\tau_0)^\vee \otimes (\xi_1^{(\ell-1-j)}\tau_0)^\vee
\]
and
\begin{eqnarray*}
\nabla((\xi_1^\ell\tau_0)^\vee) & =&  \sum_{j=0}^\ell (\xi_1^j\tau_0)^\vee \otimes (\xi_1^{(\ell -j)})^\vee +   (\xi_1^{(\ell-j)})^\vee \otimes (\xi_1^j\tau_0)^\vee  \\
& &+ \sum_{j=0}^{\ell-1}a (\xi_1^j\tau_0)^\vee \otimes (\xi_1^{(\ell-1-j)}\tau_0)^\vee .
\end{eqnarray*}

As explained in \cite[Lemma 12.6]{MR1361899}, the coaction of $\A_ \star$ on $\HF_\star$ is encoded in the right unit $\eta_R$. 
From $\eta_R(a)=a$ we get that all elements of $\A^\star$ are $\F_2[a]$-morphisms. From $\eta_R(u)= a\tau_0 + u$ we get $\tau_0^\vee(u)=a$ and  $(\xi_1^\ell)^\vee(u)=0$. We apply next the Cartan formula to compute
\begin{eqnarray*}
(\xi^\ell_1)^\vee(u^{k+1}) & = & \sum_{j=0}^\ell (\xi_1^j)^\vee(u^{k}) (\xi_1^{(\ell -j)})^\vee(u) + 
\sum_{j=0}^{\ell-1} u (\xi_1^j\tau_0)^\vee(u^{k})  (\xi_1^{(\ell-1-j)}\tau_0)^\vee(u)\\
& = & u(\xi_1^\ell)^\vee(u^k)  + au(\xi_1^{\ell-1}\tau_0)^\vee(u^{k}),
\end{eqnarray*}
and analogously
\begin{eqnarray*}
(\xi_1^\ell\tau_0)^\vee(u^{k+1}) & = & \sum_{j=0}^\ell (\xi_1^j\tau_0)^\vee(u^{k})  (\xi_1^{(\ell -j)})^\vee(u) +   (\xi_1^{(\ell-j)})^\vee(u^{k}) (\xi_1^j\tau_0)^\vee(u)  \\
& &+ \sum_{j=0}^{\ell-1}a (\xi_1^j\tau_0)^\vee(u^{k}) (\xi_1^{(\ell-1-j)}\tau_0)^\vee(u) \\
& = & u (\xi_1^\ell\tau_0)^\vee(u^{k})  + a(\xi_1^\ell)^\vee(u^{k}) + a^2 (\xi_1^{\ell-1}\tau_0)^\vee(u^{k})
\end{eqnarray*}

These formulas suffice to derive an exact computation by induction, but for our present purposes we only need the computation modulo $u$, 
which is now almost immediate.

\begin{lemma}\label{lem:valoncoef}
For any $k \geq 1$ and $\ell \geq 0$
 \[
 (\xi_1^{\ell+1})^\vee(u^{k}) = 0 \textrm{ mod } u
 \]
 \[
 (\xi_1^{\ell}\tau_0)^\vee(u^{k}) = \left\{
 	\begin{array}{rl}
 	0 \text{ mod } u& \textrm{ if } \ell \neq k-1 \\
 	 a^{2k-1} & \textrm{ if } \ell = k-1
 	\end{array}
 	\right.
 \]
\end{lemma}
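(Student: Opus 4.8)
The plan is to run an induction on $k$, fed by the two recursions for $(\xi_1^\ell)^\vee(u^{k+1})$ and $(\xi_1^\ell\tau_0)^\vee(u^{k+1})$ just derived from the Cartan formula and from $\eta_R(u) = a\tau_0 + u$. All the bookkeeping takes place modulo the ideal $(u)\subset\HF^\star$.

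First I would dispatch the vanishing $(\xi_1^{\ell+1})^\vee(u^k)\equiv 0$, which costs nothing. For $k=1$ it follows from $\eta_R(u) = a\tau_0 + u$: the only monomial basis elements $\theta$ with $\theta^\vee(u)\neq 0$ are $\theta = 1$ (value $u$) and $\theta = \tau_0$ (value $a$), so in particular $(\xi_1^{\ell+1})^\vee(u) = 0$ for all $\ell\geq 0$. For $k\geq 2$ the recursion
\[
(\xi_1^{\ell+1})^\vee(u^{k}) = u\,(\xi_1^{\ell+1})^\vee(u^{k-1}) + au\,(\xi_1^{\ell}\tau_0)^\vee(u^{k-1})
\]
visibly exhibits the class as a multiple of $u$, hence $0$ modulo $u$.

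Then, for the statement on $(\xi_1^\ell\tau_0)^\vee(u^k)$, I would induct on $k\geq 1$. The base case $k=1$ is read off $\eta_R(u) = a\tau_0 + u$ once more: $\tau_0^\vee(u) = a = a^{2\cdot 1-1}$ and $(\xi_1^\ell\tau_0)^\vee(u) = 0$ for $\ell\geq 1$, which is exactly the assertion since $\ell = k-1$ then forces $\ell = 0$. For the inductive step I would take the recursion
\[
(\xi_1^\ell\tau_0)^\vee(u^{k+1}) = u\,(\xi_1^\ell\tau_0)^\vee(u^{k}) + a\,(\xi_1^\ell)^\vee(u^{k}) + a^2\,(\xi_1^{\ell-1}\tau_0)^\vee(u^{k})
\]
(where the last term is understood to be $0$ when $\ell=0$) and reduce modulo $u$. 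The first summand dies; the second dies too, since $(\xi_1^\ell)^\vee(u^k)\equiv 0$ for $\ell\geq 1$ by the part already proven, while $(\xi_1^0)^\vee(u^k) = u^k \equiv 0$. This leaves $(\xi_1^\ell\tau_0)^\vee(u^{k+1}) \equiv a^2\,(\xi_1^{\ell-1}\tau_0)^\vee(u^{k})$ for $\ell\geq 1$ (and $\equiv 0$ for $\ell=0$). By the inductive hypothesis the right-hand side is $0$ modulo $u$ unless $\ell-1 = k-1$, in which case it equals $a^2\cdot a^{2k-1} = a^{2(k+1)-1}$; since the condition $\ell = k$ is precisely $\ell = (k+1)-1$, this closes the induction.

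I do not anticipate a genuine obstacle: the argument is the mechanical unwinding of the two recursions. The only points needing a little attention are the edge case $\ell=0$, where the $\xi_1^{\ell-1}$-term of the recursion is absent, and checking that the recursions one invokes are those valid for every $k\geq 0$, so that they genuinely cover the base case $k=1$ — for which the explicit values $\tau_0^\vee(u)=a$ and $(\xi_1^\ell\tau_0)^\vee(u)=0$, $\ell\geq 1$, coming from $\eta_R(u)=a\tau_0+u$, are what is used.
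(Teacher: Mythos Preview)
Your proof is correct and is precisely the induction the paper has in mind: the text only says that the two recursions ``suffice to derive an exact computation by induction'' and that the statement modulo $u$ is ``now almost immediate,'' so your write-up is a faithful and complete unpacking of that remark, including the edge case $\ell=0$.
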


\subsection{Action on trivial spaces}

We proved in Proposition~\ref{prop:cohoesptriv} that the equivariant cohomology of a spectrum $Y$ with trivial action is determined by the ordinary cohomology. 
Our aim here is to understand to what extent the action of the stable cohomology operations on $\HF^\star (\iota Y)$
is determined by the action of the ordinary Steenrod algebra on $\HH^\ast Y$.
Recall, e.g. from \cite{MR1361899}, that the action of the Steenrod algebra is encoded in the coaction by the dual Steenrod algebra. By 
\cite[Definition~C.3]{Rickatmm} it decomposes as:
\[
\HF^\ast \otimes \HH^\ast Y \stackrel{1 \otimes \lambda}{\longrightarrow} \HF^\ast \otimes \A_\ast \otimes \HH^\ast Y 
\stackrel{\Psi \otimes 1}{\longrightarrow} \A_\star \otimes \HH^\ast Y \cong \A_\star \otimes_{\HH^\ast} \HH^\ast \otimes \HH^\ast Y
\]
where $\lambda$ is the non-equivariant coaction map, and $\Psi$ is the $\HF^\star$-module map constructed as follows.
Let $\varepsilon\colon \iota \HH \rightarrow \HF$ be the left adjoint to the identity map $\HH \rightarrow (\HF)^{\C_2}$.
Given a non-equivariant spectrum $Y \in \Sp$, Proposition~\ref{prop:cohoesptriv} says that we have an equivariant equivalence 
$\HF \wedge_{\iota H} \iota \HH \wedge \iota Y \stackrel{\sim}{\longrightarrow} \HF \wedge \iota Y$.
The case $Y= \iota \HH$ gives us a map:
\[
\Psi\colon \HF \wedge_{\iota H} \iota(\HH \wedge \HH) \rightarrow \HF \wedge \iota H \stackrel{Id \wedge \varepsilon}{\longrightarrow}
\HF \wedge \HF
\]
which in homotopy induces:
\[
\Psi\colon \HF^\star \otimes \A_\ast \rightarrow \A_\star
\]
and encodes precisely the way $\A_\star$ co-acts on the cohomology of a trivial spectrum through the coaction of the non-equivariant 
$\A_\ast$. By linearity it is thus enough to compute the restriction $\psi\colon \A_\ast \rightarrow \A_\star$, which has been done
explicitly by Ricka~\cite{Rickatmm}. The images of Milnor's polynomial generators suffice to describe $\psi$.

\begin{proposition}\label{prop:computpsi}\cite[Theorem~D.3]{Rickatmm}
	The algebra map $\psi\colon \mathcal{A}_\ast \rightarrow \mathcal{A}_\star$ is determined by the formula:
	\[
	\psi(\zeta_n) = a^{2^n-1}\xi_n + \sum_{i=1}^n a^{2^n-2^i}\eta_r(u)^{2^{i-1}-1}\xi_{n-i}^{2^{i-i}}\tau_{n-1}.
	\]
	
\end{proposition}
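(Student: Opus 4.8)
The plan is to determine $\psi$ on the polynomial generators $\zeta_n$ of $\mathcal{A}_\ast=\F[\zeta_n\mid n\geq 1]$, which suffices since $\psi$ is a morphism of $\F$-algebras; the computation is carried out in Ricka's paper~\cite{Rickatmm} and I indicate how it goes. Unwinding the construction of $\Psi$, the class $\psi(\zeta_n)$ is $(\varepsilon\wedge\varepsilon)_\ast(\iota\zeta_n)$, an element of $\pi^{\C_2}_{2^n-1}(\HF\wedge\HF)=(\mathcal{A}_\star)_{2^n-1}$, where an integer $d$ is read as the $RO(\C_2)$-degree $d+0\alpha$ and $\varepsilon\colon\iota\HH\to\HF$ is the underlying equivalence adjoint to $\mathrm{id}_{\HH}$.

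First I would extract the part of $\psi(\zeta_n)$ not divisible by $a$. Because $(-)^u\circ\iota=\mathrm{id}$ and $\varepsilon$ is an underlying equivalence, the restriction (forget-the-action) homomorphism $\rho_\ast\colon\mathcal{A}_\star\to\mathcal{A}_\ast$ of Subsection~\ref{sec:forgetaction} satisfies $\rho_\ast\circ\psi=\mathrm{id}_{\mathcal{A}_\ast}$; together with $\rho_\ast(a)=0$, $\rho_\ast(u)=1$, $\rho_\ast(\tau_i)=\zeta_{i+1}$ and $\rho_\ast(\xi_i)=\zeta_i^2$, this pins down $\psi(\zeta_n)$ modulo the ideal $(a)$, in particular the term of the displayed formula that does not involve $a$. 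The remaining, $a$-divisible, part I would compute in two complementary ways that cross-check each other: by applying geometric fixed points $\Phi^{\C_2}$ — where $a$ becomes invertible, $\Phi^{\C_2}\HF\simeq\HH[b]$ by Proposition~\ref{prop:geofixehf}, and one compares with the action of the equivariant Steenrod operations on $\pi_\ast\Phi^{\C_2}\HF=\F[b]$ recorded in Remark~\ref{rem:geofixehf} — and by evaluating the coaction on a test space such as $B\C_2$, as Hu and Kriz do. The base case $\psi(\zeta_1)$ is then a short degree count: $(\mathcal{A}_\star)_1$ is spanned over $\F$ by $\tau_0$ and $a\xi_1$, and these constraints force $\psi(\zeta_1)=\tau_0+a\xi_1$. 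For the inductive step I would use that $\psi$ intertwines the coproducts, feeding in the formulas for $\Delta\xi_i$ and $\Delta\tau_i$ from Theorem~\ref{thm:stralgsteenequivduale} together with the classical coproduct of $\zeta_n$ to express $\psi(\zeta_n)$ through the $\psi(\zeta_j)$ with $j<n$; matching coefficients should reproduce the displayed formula.

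The main obstacle is the $RO(\C_2)$-graded bookkeeping, which is genuinely heavier than its non-equivariant shadow because $\eta_R(u)=a\tau_0+u\neq u$: the coefficient ring $\HF_\star$ is acted on non-trivially through $\mathcal{A}^\star$, so every step of the induction entangles the $\xi_i$, the $\tau_i$, powers of $a$, and powers of $\eta_R(u)$, and one has to keep the resulting binomial-type coefficients under control. Checking compatibility with $\Delta\colon\mathcal{A}_\star\to\mathcal{A}_\star\otimes_{\HF_\star}\mathcal{A}_\star$ requires, in addition, being careful about which tensor factor is a $\HF_\star$-module via $\eta_L$ and which via $\eta_R$. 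Once this is set up, verifying that the displayed expression is simultaneously compatible with $\rho_\ast$, with $\Phi^{\C_2}$, and with the coproduct — and that these compatibilities determine $\psi(\zeta_n)$ uniquely in each degree — is a finite, if lengthy, verification.
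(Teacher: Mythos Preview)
The paper does not prove this proposition: it is stated with the attribution \cite[Theorem~D.3]{Rickatmm} and used as a black box, with no argument supplied. So there is no proof in the paper to compare your sketch against; the authors simply import the formula from Ricka's work and immediately proceed to compute duals against it.

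Your outline is a plausible strategy for how such a computation could be organised --- use $\rho_\ast\circ\psi=\mathrm{id}$ to fix the part modulo $a$, then pin down the $a$-divisible part via $\Phi^{\C_2}$ and compatibility with the coproduct --- but as written it remains a programme rather than a proof: you do not actually carry out the inductive step, and phrases like ``matching coefficients should reproduce the displayed formula'' are promissory. If you want to include an argument here rather than a citation, you would need to execute the induction explicitly, and in doing so you would likely notice that the displayed formula in the paper contains typographical errors (the exponent $2^{i-i}$ and the index on $\tau$ are not consistent with the degree $|\zeta_n|=2^n-1$), which any genuine verification would have to correct along the way.
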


We want to compute the elements $\langle (\xi_1^i\tau_0)^\vee, \psi(\theta)\rangle,\langle (\xi_1^i)^\vee, \psi(\theta)\rangle  \in \HF^\star$
for arbitrary $\theta \in \mathcal{A}_\star$. 
An inspection of the formula in Proposition~\ref{prop:computpsi} shows that the monomials $\xi_1^i$ or $\xi_1^i\tau_0$ can appear in the expansion 
of $\psi(\zeta_n^k)$   if and only if $n =1$, so it is enough to compute the elements
\[
\begin{array}{cc}
C^k_j &= \langle (\xi_1^j)^\vee, \psi(\zeta_1^k)\rangle  \in \HF^\star, \\
D^k_j&=\langle (\xi_1^j)^\vee, \psi(\zeta_1^k)\rangle  \in \HF^\star.
\end{array} 
\]
In particular in the target of the map $\psi$ we may work modding out the elements $\xi_k$ for $k \geq 2$ and $\tau_k$ for $k \geq 1$.
Let us set
\[
\overline{\mathcal{A}}_\star = \mathcal{A}_\star/(\tau_k,\xi_{k+1}; k \geq 1) \cong \HF_\star[\tau_0,\xi_1]/(\tau_0^2 + a\xi_1\tau_0 + u\xi_1).
\]
Proposition~\ref{prop:computpsi} implies that the induced map 
$\mathcal{A}_\ast \rightarrow \overline{\mathcal{A}}_\star$ factors through the quotient $\mathcal{A}_\ast/(\zeta_k, k \geq 2)$, 
more precisely we have a commutative diagram:
\[
\xymatrix{
	\F_2[\zeta_1] \ar[r] \ar@{=}[dr] & \mathcal{A}_\ast \ar[d] \ar[r]^{\psi} & \mathcal{A}_\star \ar[d] \\
	& \mathcal{A}_\ast/(\zeta_k, k \geq 2) \ar[r]^-{\overline{\psi}} & \overline{\mathcal{A}}_\star
}
\]
By construction the map $\overline{\psi}\colon \mathcal{A}_\ast/(\overline{\xi}_k, k \geq 2) \rightarrow \overline{\mathcal{A}}_\star$ is 
again an algebra map.
An element in $\overline{\mathcal{A}}_\star$ can be written in a unique way as a sum of two polynomials, one in $\xi_1$, the other one
in $\xi_1$ times $\tau_0$. Let us thus define two sequences of polynomials $P_n$ and $Q_n$ in $\xi_1$  by the rule:
\[
\overline{\psi}(\zeta_1^n) = P_n + Q_n \tau_0.
\]
According to Proposition~\ref{prop:computpsi},  $\overline{\psi}(\zeta_1)= a\xi_1 + \tau_0$, so $P_1 = a\xi_1$ and $Q_1=1$.
As $\overline{\psi}$ is an algebra map, $\overline{\psi}(\zeta_1^{n+1})=\overline{\psi}(\zeta_1^{n})\overline{\psi}(\zeta_1)$ 
and we get an inductive formula for any $n$:
\[
P_{n+1} = a\xi_1 P_{n} + u\xi_1 Q_n \ \text{ and } \ Q_{n+1} = P_n.
\]
More compactly, $\psi(\zeta_1^{n+1}) = P_{n+1} + P_n \tau_0$, where the polynomials $P_n$ are determined
by $P_0=1$, $P_1 = a\xi_1$, and inductively, for any $n \geq 0$,
\[
P_{n+2} = a\xi_1 P_{n+1} + u\xi_1 P_n.
\]
By construction we have the equality $\langle (\xi_1^{i})^\vee, \psi(\zeta_1^{k}) \rangle = \langle (\xi_1^{i})^\vee, P_{k} \rangle$
in $\HF^\star$ and
\[
D^k_j= \langle (\xi_1^j\tau_0)^\vee, Q_k\tau_0 \rangle = \langle (\xi_1^j\tau_0)^\vee, Q_k\tau_0 \rangle = \langle (\xi_1^j)^\vee, P_{k-1} \rangle = C^{k-1}_j.
\]
Moreover, the inductive relation $C^{k+1}_{j+1} = a C^k_j + uC_j^{k-1}$ follows from the ones established for the $P_n$'s.
An easy induction gives then an explicit description.

\begin{lemma}\label{lem:calculxicontreP}
	For any $i \geq 0$ and $k \geq 0, \quad C_i^k = \langle (\xi_1^{i})^\vee, \psi(\zeta_1^{k}) \rangle = \binom{i}{k-i}a^{2i-k}u^{(k-i)}$.
	\end{lemma}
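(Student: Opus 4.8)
The plan is a short induction on $k$, feeding off the recursion for the polynomials $P_n$ that was just set up. Recall from the discussion preceding the statement that $C_i^k = \langle (\xi_1^i)^\vee, P_k\rangle$, i.e. $C_i^k$ is the coefficient of $\xi_1^i$ in the $\tau_0$-free polynomial $P_k$, and that these polynomials are determined by $P_0 = 1$, $P_1 = a\xi_1$, and $P_{n+2} = a\xi_1 P_{n+1} + u\xi_1 P_n$ for $n \geq 0$. Throughout I adopt the usual convention $\binom{i}{j} = 0$ whenever $j < 0$ or $j > i$; with this convention the claimed value $\binom{i}{k-i}a^{2i-k}u^{k-i}$ vanishes unless $0 \leq k-i \leq i$, and on that range both exponents $2i-k$ and $k-i$ are non-negative, so the monomial is well defined in $\HF_\star$.

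First I would dispose of the two base cases needed for the second-order recursion. For $k = 0$ one has $P_0 = 1$, so $C^0_i$ is $1$ if $i = 0$ and $0$ otherwise, which matches $\binom{i}{-i}a^{2i}u^{-i}$; for $k = 1$ one has $P_1 = a\xi_1$, so $C^1_i$ is $a$ if $i = 1$ and $0$ otherwise, which matches $\binom{i}{1-i}a^{2i-1}u^{1-i}$. For the inductive step, write $c_{n,i}$ for the coefficient of $\xi_1^i$ in $P_n$, so that the recursion reads $c_{n+2,i} = a\,c_{n+1,i-1} + u\,c_{n,i-1}$. Substituting the inductive hypothesis for $c_{n+1,i-1}$ and $c_{n,i-1}$ and simplifying the powers of $a$ and $u$ — the key point being that the two monomials that appear both equal $a^{2i-n-2}u^{n+2-i}$ — leaves
\[
c_{n+2,i} = \Bigl( \binom{i-1}{n+2-i} + \binom{i-1}{n+1-i} \Bigr)\, a^{2i-n-2}\, u^{n+2-i}.
\]
Pascal's identity, which holds over $\mathbf{Z}$ and hence after reduction mod $2$, collapses the two binomial coefficients to $\binom{i}{n+2-i} = \binom{i}{(n+2)-i}$, which is exactly the coefficient of $\xi_1^i$ in $P_{n+2}$ predicted by the formula. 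This closes the induction, and since $\langle(\xi_1^i)^\vee, P_k\rangle$ is $C_i^k$ by construction, the lemma follows.

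I do not anticipate any real obstacle here; the calculation is routine. The two places that require a little care are the exponent bookkeeping — one genuinely has to check that the powers of $a$ and of $u$ produced by the two summands coincide before Pascal's rule can be applied — and making sure the degenerate ranges of $i$, where $P_k$ has no $\xi_1^i$-term at all, are consistent with the vanishing of the relevant binomial coefficients, so that the stated identity holds in \emph{every} degree and not only in the generic range $\lceil k/2\rceil \leq i \leq k$.
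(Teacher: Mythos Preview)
Your proof is correct and follows exactly the approach the paper indicates: the paper derives the recursion $C^{k+1}_{j+1} = a\,C^k_j + u\,C^{k-1}_j$ from the relation $P_{n+2} = a\xi_1 P_{n+1} + u\xi_1 P_n$ and then simply says ``an easy induction gives then an explicit description,'' which is precisely the induction you spell out using Pascal's identity.
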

	
Since $\zeta_1^k$ is the dual element to the Steenrod square $Sq^k$, the above lemma and the way $\psi$ encodes 
the action of Steenrod algebra yields for any $y \in \HH^n(Y)$: 
	\[
	(\xi_1^i)^\vee(y) = \sum_{k=i}^{2i}\binom{i}{k-i}a^{2i-k}u^{(k-i)}Sq^{k}(y)
	\]
	\[
	(\xi_1^i\tau_0)^\vee(y) = \sum_{k=i+1}^{2i+1}\binom{i}{k-1-i}a^{2i-k-1}u^{(k-i-1)}Sq^{k}(y)
	\]
The change of variables $j=k-i$ in the first sum and $j= k-i-1$ in the second provides us finally with the formulas we were looking for.
 The action of the cohomology operations $(\xi_1^i)^{\vee}$ and $(\xi_1^i\tau_0)^\vee$ on the equivariant cohomology of a trivial $\C_2$-spectrum $Y$ are 
entirely determined by the following formulas.

\begin{lemma}\label{lem:actionxiontrivcoef2}
Let $Y$ be a trivial $\C_2$-spectrum and $y \in \HH^n(Y)$. Then
\[
	(\xi_1^i)^{\vee}(y) = \sum_{j=0}^i \binom{i}{j} Sq^{i+j}(y)u^{j}a^{i-j} \ \text{and} \ 
	(\xi_1^i\tau_0)^{\vee}(y) = \sum_{j=0}^i \binom{i}{j} Sq^{i+j+1}(y)u^{j}a^{i-j}.
\]
	\end{lemma}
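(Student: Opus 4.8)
The plan is to read the statement off the computations already assembled in this subsection, so that essentially all of the work reduces to a change of variables on top of Lemma~\ref{lem:calculxicontreP}.

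First I would make the dualization explicit. For a trivial $\C_2$-spectrum $Y$, the coaction of $\mathcal{A}_\star$ on $\HF^\star(\iota Y)$ decomposes as $(\Psi\otimes 1)\circ(1\otimes\lambda)$, where $\lambda$ is the non-equivariant coaction and $\Psi$ restricts to $\psi\colon\mathcal{A}_\ast\to\mathcal{A}_\star$. Since $\zeta_1^k$ is dual to $Sq^k$ in $\mathcal{A}_\ast$, the $\zeta_1$-part of $\lambda(y)$ is $\sum_{k\geq 0}\zeta_1^k\otimes Sq^k(y)$, while every remaining monomial of $\mathcal{A}_\ast$ involves some $\zeta_j$ with $j\geq 2$. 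By Proposition~\ref{prop:computpsi}, $\psi$ sends each such monomial into the ideal generated by the $\xi_{k+1}$ and $\tau_k$ with $k\geq 1$ (equivalently, it dies in the quotient $\overline{\mathcal{A}}_\star$ in which only $\xi_1$ and $\tau_0$ survive), so these monomials pair trivially with both $(\xi_1^i)^\vee$ and $(\xi_1^i\tau_0)^\vee$. Hence for $y\in\HH^n(Y)$ one gets
\[
(\xi_1^i)^\vee(y)=\sum_{k\geq 0}\langle(\xi_1^i)^\vee,\psi(\zeta_1^k)\rangle\,Sq^k(y)=\sum_{k\geq 0}C_i^k\,Sq^k(y),
\]
and likewise $(\xi_1^i\tau_0)^\vee(y)=\sum_{k\geq 0}D_i^k\,Sq^k(y)$ with $D_i^k=C_i^{k-1}$, the latter identity coming from the recursion for the polynomials $P_n,Q_n$ set up before Lemma~\ref{lem:calculxicontreP}.

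Next I would insert Lemma~\ref{lem:calculxicontreP}, which gives $C_i^k=\binom{i}{k-i}a^{2i-k}u^{k-i}$. Substituting $j=k-i$ into the first sum turns it into $\sum_j\binom{i}{j}Sq^{i+j}(y)\,u^{j}a^{i-j}$, and since $\binom{i}{j}$ vanishes unless $0\leq j\leq i$ the range is exactly $j=0,\dots,i$. Substituting $j=k-i-1$ into the second sum, using $D_i^{\,i+j+1}=C_i^{\,i+j}=\binom{i}{j}a^{i-j}u^{j}$, yields $\sum_{j=0}^i\binom{i}{j}Sq^{i+j+1}(y)\,u^{j}a^{i-j}$. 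These are precisely the two asserted formulas, so the proof is complete.

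The step I expect to be most delicate is the bookkeeping inside the dualization rather than any individual calculation: pinning down exactly which monomial basis elements of $\mathcal{A}_\ast$ survive the pairing against $(\xi_1^i)^\vee$ and $(\xi_1^i\tau_0)^\vee$ after applying $\psi$ (the reduction to $\overline{\mathcal{A}}_\star$ and the fact that only powers of $\zeta_1$ contribute), together with checking that no index is shifted when passing from the coaction to the action on $y$. Once that is secured, the lemma is just the change of variables above.
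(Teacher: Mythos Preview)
Your proposal is correct and follows essentially the same route as the paper: both arguments reduce to the quotient $\overline{\mathcal{A}}_\star$ to see that only powers of $\zeta_1$ contribute, invoke Lemma~\ref{lem:calculxicontreP} for the coefficients $C_i^k$ together with $D_i^k=C_i^{k-1}$, and then perform the substitutions $j=k-i$ and $j=k-i-1$ to arrive at the stated formulas. Your careful phrasing of the dualization step and of why mixed monomials in the $\zeta_j$ with $j\geq 2$ pair trivially is exactly the bookkeeping the paper carries out in the paragraphs preceding the lemma.
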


\bibliographystyle{plain}\label{biblography}

\end{document}